\newtheorem{theorem}{Theorem}[section]
\newtheorem{lemma}[theorem]{Lemma}
\newtheorem{corollary}[theorem]{Corollary}
\newtheorem{prop}[theorem]{Proposition}
\newcommand{\abc}{\mathcal{ABC}}
\begin{document}

\title{The evolution of the structure of ABC-minimal trees}

\author{Seyyed Aliasghar Hosseini\\[1mm]
\tt{sahossei@sfu.ca}\\
Department of Mathematics\\
Simon Fraser University\\
Burnaby, BC, Canada
\and
Bojan Mohar\thanks{Supported in part by an NSERC Discovery Grant (Canada),
   by the Canada Research Chair program, and by the
    Research Grant P1--0297 of ARRS (Slovenia).}~\thanks{On leave from:
    IMFM \& FMF, Department of Mathematics, University of Ljubljana, Ljubljana,
    Slovenia.}\\[1mm]
\tt{mohar@sfu.ca}\\
  Department of Mathematics\\
  Simon Fraser University\\
  Burnaby, BC, Canada
\and
Mohammad Bagher Ahmadi\\[1mm]
\tt{mbahmadi@shirazu.ac.ir}\\
Department of Mathematics\\
Shiraz University\\
Shiraz, Iran
}

\date{\today}

\maketitle

\begin{abstract}
The atom-bond connectivity (ABC) index is a degree-based molecular descriptor that found diverse chemical applications. Characterizing trees with minimum ABC-index remained an elusive open problem even after serious attempts and is considered by some as one of the most intriguing open problems in mathematical chemistry. In this paper, we describe the exact structure of the extremal trees with sufficiently many vertices and we show how their structure evolves when the number of vertices grows.
An interesting fact is that their radius is at most~$5$ and that all vertices except for one have degree at most 54. In fact, all but at most $O(1)$ vertices have degree 1, 2, 4, or 53.
Let $\gamma_n = \min\{\abc(T) :  T \textrm{ is a tree of order } n\}$.  It is shown that $\gamma_n = \tfrac{1}{365} \sqrt{\tfrac{1}{53}} \Bigl(1 + 26\sqrt{55} + 156\sqrt{106} \Bigr) n + O(1) \approx 0.67737178\, n + O(1)$.
\end{abstract}

\section{Introduction}

Molecular descriptors \cite{ref1} are mathematical quantities that describe the structure or shape of molecules, helping to predict the activity and properties of molecules in complex experiments. In the last few years a number of new molecular structure descriptors has been conceived \cite{ref2,ref3,ref4,ref5}.  Molecular descriptors play a significant role in chemistry, pharmacology, etc. Among molecular structure descriptors, topological indices have a prominent place. They are useful tools for modeling physical and chemical properties of molecules, for design of pharmacologically active compounds, for recognizing environmentally hazardous materials, etc., see~\cite{ref6}.  One of the most important topological indices is the Atom Bond Connectivity index,
also known as the ABC index.  It was introduced by Estrada \cite{ref7} with relation to the energy of formation of alkanes. It was quickly recognized that this index reflects important structural properties of graphs in general. The ABC index was extensively studied in the last few years, from the point of view of chemical graph theory \cite{ref23,ref9}, and in general graphs \cite{ref10}. Additionally, the physico-chemical applicability of the ABC index and its mathematical properties was confirmed and extended in several studies \cite{ref11,ref12,ref13,ref14,ref15,ref16,ref17}.
Some novel results about ABC index can be found in \cite{ref18,ref19,ref20} and in the references cited therein.

Let $G$ be a simple graph on $n$ vertices, and let its vertex-set be $V(G)$ and edge-set $E(G)$. By $uv$ we denote the edge connecting the vertices $u$ and $v$. The degree of a vertex $v$ is denoted by $d_v$. For an edge $uv$ in $G$, we consider the quantity
$$f(d_u,d_v )=\sqrt{\frac{d_u+d_v-2}{d_u d_v }}.$$
The \emph{Atom-Bond Connectivity index} (shortly \emph{ABC index}) of $G$ is defined as
$$\abc(G)=\sum_{uv\in E(G)} f(d_u,d_v ).$$
When the mathematical properties of a graph-based structure descriptor are investigated, one of the first questions is for which graph (with a given order $n$) is this descriptor minimal or maximal.
It is known that adding an edge in a graph strictly increases its ABC index \cite{ref21} and deleting an edge in a graph strictly decreases its ABC index \cite{ref22}. According to this fact, among all connected graphs with $n$ vertices, the complete graph $K_n$ has the maximum ABC index and graphs with minimum ABC index are trees. A tree is said to be \emph{ABC-minimal} if no other tree on the same number of vertices has smaller ABC index.

Although it is easy to show that the star graph $S_n$ has maximum ABC index among all trees of the same order \cite{ref23}, despite many attempts in the last years, it is still an open problem to characterize trees with minimum ABC-index (ABC-minimal trees).
Eventually, a computer-aided study \cite{ref24} gave rise to a conjecture on the actual structure of the ABC-minimal trees. Later results \cite{ref25,ref26} revealed that the conjecture was false, and that the true structure of the ABC-minimal trees is more complex than the computer-aided results have indicated.
In \cite{ref10}, the author presents lower and upper bounds on the ABC index of general graphs and trees, and characterizes graphs for which these bounds are best possible.

In this work we finally resolve the question on giving a precise description of ABC-minimal trees. For small values of $n$, this structure is as observed in previous works (see \cite{comp1100} for $n\le1100$). However, with number of vertices growing, a new structure, called the $C_{52}$-branch, emerges. When $n$ is very large, any ABC-minimal tree just slightly deviates from being composed of one vertex of large degree to which the $C_{52}$-branches are attached. See the last section for more details.

An interesting fact is that the radius of ABC-minimal trees is at most~$5$ (usually just 4) and that all vertices except for one have degree at most 54. In fact, all but at most $O(1)$ vertices have degree 1, 2, 4, or 53.
Let $\gamma_n = \min\{\abc(T) :  T \textrm{ is a tree of order } n\}$.  It is shown that $\gamma_n = \tfrac{1}{365} \sqrt{\tfrac{1}{53}} \Bigl(1 + 26\sqrt{55} + 156\sqrt{106} \Bigr) n + O(1) \approx 0.67737178\, n + O(1)$.

The proofs are simplified by introducing a natural equivalence relation, called similarity, on the set of all ABC-minimal trees and considering only those elements in each similarity class that are maximal in certain total order on all rooted trees. They are said to be \emph{ABC-extremal}.
The main structure results that are proved along the way towards the structural description of ABC-minimal trees are the following. For each ABC-extremal tree we let $R$ be its vertex having maximum degree. It is proved that all edges $uv$ have different vertex degrees, $d_u\ne d_v$, with a possible exception of having one edge incident with $R$, whose ends both have maximum degree, and having one edge whose ends are both of degree 2. It also has been proved that for $n\geq 415$, a 2-2 edge (known as a pendent path of length three) cannot happen in an ABC-minimal tree, see \cite{no_pendent3}. Every path starting from the root $R$ has strictly decreasing degrees, with the only possible exception when it contains an exceptional edge mentioned above (see Theorem \ref{thm:2.1} and Corollary \ref{cor:degrees decrease}). The next major step is to prove that vertices at distance 2 from $R$ have degree at most 5 (Theorem \ref{main}). Vertices of degree 1 are always adjacent to vertices of degree 2, and it is shown that the root cannot be incident with vertices of degree 2 (when $n\ge40$), and that the subtree of any vertex of degree at least 3 that is adjacent to a vertex of degree 2 has very specific structure (see Corollary \ref{cor:endingbranches}).
It is shown, as a consequence, that extremal trees are mainly composed of so called $B_3$-branches, which are grouped together into $C$-branches (see Section \ref{sect:2} for definitions). Earlier works have not expected $C$-branches, and it was conjectured in \cite{ref24} and \cite{Dim13} that $B_3$-branches are the main structural component. It was found in \cite{ourpaper} that $C$-branches will play a prominent role. A new surprise coming from our work is that for large $n$ only $C_{52}$-branches occur, see Theorem \ref{thm:large_structure}.
An interesting final outcome is that the radius of ABC-extremal trees is at most~$5$ and that all vertices except for one have degree at most 54. In fact, all but at most $O(1)$ vertices have degree 1, 2, 4, or 53.

\section{Basic ingredients}
\label{sect:2}

The following known facts will be used in the paper.

\begin{theorem}[\cite{ref27}]\label{thm:1.1}
In every ABC-minimal tree of order at least\/ $3$, each vertex of degree $1$ is adjacent to a vertex of degree $2$.
\end{theorem}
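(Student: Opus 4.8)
The plan is to argue by contradiction from minimality, using a single local rearrangement whose effect on the \abc-index can be computed exactly. Suppose $T$ is ABC-minimal with $n\ge 3$ and that some leaf $u$ is adjacent to a vertex $v$. Since a tree on at least three vertices is not a single edge, $d_v\ge 2$, so it suffices to rule out $d_v=k\ge 3$. Before doing anything I would record the elementary behaviour of $f$ that drives everything: $f$ is symmetric; $f(1,k)=\sqrt{(k-1)/k}$ strictly increases to $1$; one has the identity $f(a,2)=1/\sqrt2$ for \emph{every} $a$; and, writing $f(a,d)^2=\tfrac1d+\tfrac1a\cdot\tfrac{d-2}{d}$, the map $a\mapsto f(a,d)$ is strictly decreasing for $d\ge 3$, constant for $d=2$, and increasing for $d=1$. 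This last fact is the source of the only real difficulty: lowering a vertex's degree \emph{raises} the weight of each of its edges to a neighbour of degree $\ge 3$.

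To obtain a configuration I can control, I would first pass to a \emph{longest path} $v_0v_1v_2\cdots$ in $T$, with $v_0$ a leaf. Maximality of the path forces every neighbour of $v_1$ other than $v_2$ to be a leaf, so $v_1$ carries exactly $d_{v_1}-1$ leaves and a single non-leaf neighbour $v_2$. If $d_{v_1}=k\ge 3$, then $v_1$ has at least two leaves $v_0,v_0'$, and I perform the \emph{leaf-peeling} move: delete the edge $v_1v_0'$ and add the edge $v_0v_0'$, turning $v_0$ into a degree-$2$ vertex and lowering $d_{v_1}$ to $k-1$. Using $f(a,2)=1/\sqrt2$, the two former pendant edges (total weight $2f(1,k)$) are replaced by two edges of total weight $\sqrt2$, while the only edges whose weight can increase are those from $v_1$ to its remaining neighbours; among these the leaf edges change favourably (second coordinate $1$) and the single edge to $v_2$ changes by at most $\sqrt{1/(k-1)}-\sqrt{1/k}$. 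Hence
$$\abc(T')-\abc(T)\ \le\ \sqrt2-2f(1,k)+\Bigl(\sqrt{\tfrac1{k-1}}-\sqrt{\tfrac1k}\Bigr),$$
and a direct check shows $2f(1,k)-\sqrt2>\sqrt{1/(k-1)}-\sqrt{1/k}$ for all $k\ge 3$ (the left side increases from $\approx0.219$, while the right side decreases from $\approx0.130$). Thus $\abc(T')<\abc(T)$, contradicting minimality, so $d_{v_1}=2$.

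The step above is clean precisely because the longest-path vertex $v_1$ has at most one non-leaf neighbour, so only a \emph{single} side-effect term appears. The main obstacle is to upgrade this into the full statement, which concerns \emph{every} leaf, not only those at the ends of longest paths. The troublesome case is an interior vertex $v$ of degree $k\ge 3$ that has a leaf but also two or more high-degree neighbours: lowering $d_v$ then incurs a side effect of up to $\sqrt{1/(k-1)}-\sqrt{1/k}$ on \emph{each} such edge, and for small $k$ (already $k=3$ with two large neighbours) these can outweigh the pendant savings, so no purely local move is guaranteed to decrease \abc. I would resolve this by an extremal choice rather than a new inequality: take a minimal counterexample and, among all leaves sitting on a vertex of degree $\ge 3$, select one that is as far from the bulk of the tree as possible (for instance, root at a centre and choose a deepest offending vertex, or iterate the longest-path reduction), so that the offending vertex again has at most one non-leaf neighbour and the computation of the previous paragraph applies verbatim. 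Making this reduction airtight --- guaranteeing that some offending leaf always sits on a vertex with at most one non-leaf neighbour --- is the crux, and is where the bulk of the casework would go.
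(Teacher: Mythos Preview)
The paper does not prove this statement itself; it is quoted from~\cite{ref27}, so there is no in-paper argument to compare against and your proposal has to stand on its own.

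Your longest-path step is correct: if $v_0v_1\cdots$ is a longest path then every neighbour of $v_1$ other than $v_2$ is a leaf, the peeling move is well-defined when $d_{v_1}=k\ge3$, and the inequality $2f(1,k)-\sqrt{2}>\sqrt{1/(k-1)}-\sqrt{1/k}$ holds for all $k\ge3$. This shows that the endpoints of any longest path sit on degree-$2$ vertices. The gap is exactly where you flag it, and the repair you sketch does not close it. Choosing a ``deepest offending vertex'' $v$ forces neither that $v$ has at most one non-leaf neighbour nor that $v$ carries two leaves (both of which your peeling move needs). Concretely, let $v$ have degree $3$ with a parent $p$, a single leaf $u$, and a child $c$ of degree $2$ whose other neighbour is a leaf. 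Then $v$ is the only vertex of degree $\ge3$ in $T_v$ that carries a leaf --- hence ``deepest'' by any reasonable reading --- yet $v$ has two non-leaf neighbours ($p$ and $c$) and only one leaf, so your move is not even defined at $v$. Replacing $c$ by a vertex of degree $\ge3$ all of whose descendants are leaves on degree-$2$ vertices gives the same obstruction with arbitrarily large subtrees below $v$, so no depth-based selection rescues the argument. Your computation therefore proves only that \emph{some} leaf is adjacent to a degree-$2$ vertex, not that every leaf is; and ``iterating the longest-path reduction'' does not help either, since deleting vertices from an ABC-minimal tree need not leave an ABC-minimal tree. What is actually needed is a move that works at an \emph{arbitrary} offending $v$, or an exchange in the spirit of Lemma~\ref{lem:similar} that transports the bad leaf to a controlled location (for instance, swapping $u$ with a two-vertex pendant $T_{v_1}$ produced by the longest-path step handles the case $d_{v_2}<d_v$, but the case $d_{v_2}\ge d_v$ remains). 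As written the proposal has a genuine missing step.
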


\begin{prop}[\cite{ref18}]\label{prop:2}
Let $x,y\geq 2$ be real numbers and let $a\geq 0$ and $b$, $0\leq b<y-1$ be constants. Let
$$g(x,y)= f(x+a,y-b) - f(x,y).$$
Then, $g(x,y)$ is increasing in $x$ and decreasing in $y$.
\end{prop}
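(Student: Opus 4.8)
The plan is to prove both monotonicity claims by examining the sign of the partial derivatives of $g$. Since $a$ and $b$ are constants,
\[
\frac{\partial g}{\partial x}=f_x(x+a,y-b)-f_x(x,y),\qquad
\frac{\partial g}{\partial y}=f_y(x+a,y-b)-f_y(x,y),
\]
where $f_x=\partial f/\partial x$ and $f_y=\partial f/\partial y$. Writing $f=\sqrt{h}$ with $h=\tfrac1x+\tfrac1y-\tfrac{2}{xy}$, a short computation gives the convenient closed forms
\[
f_x(x,y)=\frac{2-y}{2\,x^{3/2}\sqrt{y}\,\sqrt{x+y-2}},\qquad
f_y(x,y)=\frac{2-x}{2\,\sqrt{x}\,y^{3/2}\sqrt{x+y-2}} .
\]
So the task reduces to showing $f_x(x+a,y-b)\ge f_x(x,y)$ and $f_y(x+a,y-b)\le f_y(x,y)$. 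I would note at the outset that the hypotheses $x\ge2$, $a\ge0$, $0\le b<y-1$ force $x+a\ge2$ and $y-b>1$, so each radicand above is positive and the formulas are valid at both evaluation points.

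For the $x$-statement the decisive feature is that $f_x(x,y)$ changes sign exactly when its \emph{second} argument crosses $2$: it is $\le0$ when that argument is $\ge2$ and $\ge0$ when it lies in $(1,2]$. Because $y\ge2$ we always have $f_x(x,y)\le0$, which suggests a split. If $y-b\le2$ then $f_x(x+a,y-b)\ge0\ge f_x(x,y)$ and the inequality is immediate. If $y-b>2$, both derivatives are negative and I would instead compare magnitudes, aiming for $|f_x(x+a,y-b)|\le|f_x(x,y)|$. Using $|f_x(x',y')|=\tfrac12(y'-2)(x')^{-3/2}(y')^{-1/2}(x'+y'-2)^{-1/2}$, I would establish two one-variable monotonicities on $x',y'\ge2$: it is decreasing in $x'$ (obvious, only the negative powers of $x'$ and of $x'+y'-2$ vary) and increasing in $y'$. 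Passing from $(x,y)$ to $(x+a,y-b)$ raises the first argument and lowers the second, and both monotonicities then push $|f_x|$ downward, which is exactly what is required.

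For the $y$-statement the picture is cleaner and needs no case split: since $x\ge2$ and $x+a\ge2$, both $2-x$ and $2-(x+a)$ are $\le0$, so $f_y$ is $\le0$ at both points. I would show that $|f_y(x',y')|=\tfrac12(x'-2)(x')^{-1/2}(y')^{-3/2}(x'+y'-2)^{-1/2}$ is increasing in $x'$ (for $x'\ge2$) and decreasing in $y'$; the latter is immediate from the negative powers, and raising the first argument to $x+a$ together with lowering the second to $y-b$ then both increase $|f_y|$. This yields $|f_y(x+a,y-b)|\ge|f_y(x,y)|$, i.e. $f_y(x+a,y-b)\le f_y(x,y)$, as needed.

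The only step that is not bookkeeping — and hence the main obstacle — is the monotonicity in the variable sitting in the numerator: that $|f_x|$ is increasing in $y'$ and, symmetrically, that $|f_y|$ is increasing in $x'$. I would handle this by squaring and discarding the constant and the non-increasing factors, reducing the first claim to showing that $t^2/\bigl((t+2)(t+x')\bigr)$ is increasing in $t=y'-2\ge0$; its derivative has numerator $t\bigl((x'+2)t+4x'\bigr)\ge0$, which settles it, and the statement for $|f_y|$ follows by the symmetry $f(x,y)=f(y,x)$. I would emphasize that it is precisely the sign change of $f_x$ that makes the magnitude-and-cases route preferable to a direct convexity argument: the second derivative $f_{xx}$ is nonnegative only when the second argument is $\ge2$, so a naive ``$g$ increasing because $f$ is convex in $x$'' reasoning would break at the evaluation point $y-b$ whenever $y-b<2$, which the present approach sidesteps entirely.
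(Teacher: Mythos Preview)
Your proof is correct. The paper does not give its own proof of this proposition; it is quoted from \cite{ref18} and used as a tool throughout, so there is nothing to compare your argument against within the paper itself.

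For the record, your partial-derivative computation
\[
f_x(x,y)=\frac{2-y}{2\,x^{3/2}\sqrt{y}\,\sqrt{x+y-2}},\qquad
f_y(x,y)=\frac{2-x}{2\,\sqrt{x}\,y^{3/2}\sqrt{x+y-2}}
\]
is right, the case split at $y-b=2$ for the $x$-monotonicity is exactly what is needed (since $f_x$ changes sign there), and the two one-variable monotonicity claims for $|f_x|$ and $|f_y|$ are verified correctly; in particular your reduction to showing that $t^2/\bigl((t+2)(t+x')\bigr)$ is increasing in $t\ge0$, with derivative numerator $t\bigl((x'+2)t+4x'\bigr)\ge0$, is clean and handles both via the symmetry $f(x,y)=f(y,x)$. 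The domain checks ($x+a\ge2$, $y-b>1$, all radicands positive) are also in order. The closing remark about $f_{xx}$ is a side comment rather than part of the argument, but it correctly flags why a naive convexity-in-$x$ approach would not cover the case $y-b<2$.
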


\begin{theorem}[\cite{ref28}]\label{thm:1.3}
If an ABC-minimal tree has distinct vertices $v_1,v_2,v_3$ such that
$d_{v_1}\geq d_{v_2}> d_{v_3}$, then $v_3$ cannot be adjacent to both $v_1$ and $v_2$.
\end{theorem}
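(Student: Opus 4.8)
The plan is to argue by contradiction. Suppose the ABC-minimal tree $T$ has distinct vertices $v_1,v_2,v_3$ with $v_3$ adjacent to both $v_1$ and $v_2$ and $d_{v_1}\ge d_{v_2}>d_{v_3}$; write $p=d_{v_1}$, $q=d_{v_2}$, $k=d_{v_3}$, so $p\ge q>k$. A few normalizations come first. As $v_3$ has two neighbours it is not a leaf, so $k\ge 2$; since $q>k\ge 2$ we get $q\ge 3$ and hence $p\ge 3$; and by Theorem~\ref{thm:1.1} a vertex of degree at least $3$ has no leaf neighbour, so every neighbour of $v_1$ and of $v_2$ has degree at least $2$. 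These bounds are exactly what is needed for Proposition~\ref{prop:2} to apply to every edge I touch. The main device is a degree-preserving $2$-switch: fix any neighbour $w\ne v_3$ of $v_1$ and build $T'$ from $T$ by deleting $v_2v_3$ and $v_1w$ and adding $v_1v_2$ and $v_3w$ (note $v_1v_2\notin E(T)$, since $v_1,v_2$ are both adjacent to $v_3$). Deleting $v_2v_3$ splits off the branch at $v_2$, deleting $v_1w$ then splits off the branch at $w$, and the two new edges reconnect the three pieces without a cycle, so $T'$ is again a tree on $V(T)$; moreover every vertex keeps its degree, since $v_1$ trades $w$ for $v_2$, $v_3$ trades $v_2$ for $w$, and $v_2,w$ each swap one neighbour.

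Because no degree changes, only the four rewired edges alter their endpoint-degree pair, and
\[
\abc(T')-\abc(T)=\bigl(f(p,q)+f(k,d_w)\bigr)-\bigl(f(q,k)+f(p,d_w)\bigr).
\]
Setting $a=p-k\ge 1$ and $g(x,y)=f(x+a,y)-f(x,y)$, the symmetry of $f$ lets me rewrite the right-hand side as $g(k,q)-g(k,d_w)$. Proposition~\ref{prop:2} gives that $g$ is decreasing in its second argument, and a direct check shows the decrease is strict in the range at hand, so $\abc(T')<\abc(T)$, contradicting minimality, as soon as $d_w<q$. The mirror switch at $v_2$ (delete $v_1v_3$ and $v_2w'$, add $v_1v_2$ and $v_3w'$) gives in the same way $\abc(T')-\abc(T)=g'(k,p)-g'(k,d_{w'})$ with $g'(x,y)=f\bigl(x+(q-k),y\bigr)-f(x,y)$, so it produces a contradiction whenever $v_2$ has a neighbour $w'\ne v_3$ with $d_{w'}<p$. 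The whole problem has thus been reduced to the combinatorial task of locating a neighbour of $v_1$ of degree $<q$, or a neighbour of $v_2$ of degree $<p$ (other than $v_3$ in each case).

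To force such a neighbour I would pass to an extremal counterexample, choosing among all forbidden triples one that maximizes the low degree $k=d_{v_3}$. If both switches fail, then every neighbour of $v_1$ other than $v_3$ has degree $\ge q$ and every neighbour of $v_2$ other than $v_3$ has degree $\ge p$. When $p>q$ this is already impossible: the $q-1\ge 2$ neighbours of $v_2$ other than $v_3$ then all have degree $\ge p>q=d_{v_2}$, so any two of them exhibit $v_2$ as the small vertex of a forbidden triple whose low degree $q$ exceeds $k$, contradicting maximality of $k$. The genuinely hard case, which I expect to be the main obstacle, is $p=q$: here the surrounding vertices are only guaranteed to have degree $\ge p$ with equality allowed, and the non-strict bound cannot be turned directly into the strict degree drop the switch needs. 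This is the same equality phenomenon that, according to the introduction, survives only in the two exceptional equal-degree edges of extremal trees. I would attack it by noting that a switch with $d_w=p=q$ leaves the ABC index unchanged, so it merely transports the forbidden configuration to a new vertex; the plan is to combine these zero-cost moves with a secondary monovariant (such as the distance from the low vertex to the nearest leaf) and with Theorem~\ref{thm:1.1}, applied at a vertex that is extremal in the subforest induced by the vertices of degree $\ge p$, to manufacture the strict inequality and close the argument.
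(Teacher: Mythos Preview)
The paper does not give its own proof of Theorem~\ref{thm:1.3}; it is quoted as a known result from~\cite{ref28}. So there is no in-paper argument to compare against. That said, your $2$-switch is precisely the exchange operation $T(w,v_2)$ of Lemma~\ref{lem:similar}: rooting at $v_1$, you are swapping the subtree $T_w$ (a son of $v_1$) with $T_{v_2}$ (a son of $v_3$), and Lemma~\ref{lem:similar}(a) gives $\abc(T(w,v_2))<\abc(T)$ directly whenever $d_{v_1}>d_{v_3}$ and $d_w<d_{v_2}$. So your reduction ``find a neighbour $w\ne v_3$ of $v_1$ with $d_w<q$, or a neighbour $w'\ne v_3$ of $v_2$ with $d_{w'}<p$'' is exactly right, and your treatment of the case $p>q$ via the extremal choice is clean and complete.

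The genuine gap is the case $p=q$, which you explicitly leave as a plan rather than a proof. Your sketch (``walk the configuration via zero-cost switches using a secondary monovariant'') is the right idea and can be made precise as follows. Choose, over \emph{all} ABC-minimal trees on $n$ vertices and all bad triples in them, a pair $(T;v_1,v_2,v_3)$ that first maximizes $k=d_{v_3}$ and then, subject to that, minimizes $|T_{v_2}|$ (the subtree at $v_2$ when $T$ is rooted at $v_1$). If the mirror switch at $v_2$ succeeds, you are done. Otherwise every child of $v_2$ has degree $\ge p$; your own argument (two such children of degree $>p$ would produce a bad triple with low vertex $v_2$ of degree $q>k$) forces at least one child $w'$ of $v_2$ with $d_{w'}=p$. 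The mirror switch with this $w'$ has cost zero, yields another ABC-minimal tree $T'$, and in $T'$ the triple $(v_2,w',v_3)$ is bad with the same low degree $k$ but with $|T'_{w'}|=|T_{w'}|<|T_{v_2}|$, contradicting the extremal choice. This replaces your informal ``distance to a leaf'' monovariant with a concrete one and closes the case without further case analysis.
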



Let $T$ be an ABC-minimal tree of order $n$ and let $\Delta$ be the maximum degree of $T$. Let us pick one of the vertices of degree $\Delta$ and call it a \emph{root} of $T$. We will denote the root by $R$ and from now on consider any tree as a rooted tree. Thus, we can speak about descendants, predecessors, the sons of a vertex (immediate successors), etc.
We also define the \emph{height} function $h:V(T)\to{\mathbb N}$ by taking $h(v)$ to be the distance of $v$ from the root $R$ in $T$.

For each vertex $v$, we denote by $T_v$ the subtree of $T$ consisting of $v$ and all of its descendants. If $T_v\cap T_{v'}=\emptyset$, then we define another tree, $T(v,v')$, that is obtained from $T$ by exchanging $T_v$ and $T_{v'}$. Under certain conditions, this \emph{exchange operation} reduces the ABC-index, meaning that such conditions cannot occur in ABC-minimal trees. The following result was proved by Lin et al.\ in~\cite{switch_trans}.

\begin{lemma}[\cite{switch_trans}]\label{lem:similar}
Let $uv$ and $u'v'$ be edges of a tree $T$. Suppose that $v$ is a son of $u$, ~$v'$ is a son of $u'$ and that $T_v\cap T_{v'}=\emptyset$.

{\rm (a)}
If $d_u>d_{u'}$ and $d_v<d_{v'}$, then $\abc(T)>\abc(T(v,v'))$. In particular, $T$ is not ABC-minimal.

{\rm (b)}
If $d_u=d_{u'}$ or $d_v=d_{v'}$, then $\abc(T) = \abc(T(v,v'))$.
\end{lemma}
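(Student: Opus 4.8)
The plan is to exploit the fact that the exchange operation $T \mapsto T(v,v')$ leaves every vertex degree unchanged. Detaching $T_v$ from $u$ and reattaching it to $u'$ (and symmetrically for $T_{v'}$) does not alter the number of neighbours of $u$, $u'$, $v$ or $v'$: the vertex $u$ trades its son $v$ for $v'$, the vertex $u'$ trades $v'$ for $v$, and nothing inside $T_v$ or $T_{v'}$ is touched (the hypothesis $T_v\cap T_{v'}=\emptyset$ guarantees that the result is again a tree). Consequently every edge of $T$ other than $uv$ and $u'v'$ contributes the same term to both ABC-indices, and
\[
\abc(T) - \abc(T(v,v')) = f(d_u,d_v) + f(d_{u'},d_{v'}) - f(d_u,d_{v'}) - f(d_{u'},d_v).
\]
This single identity is the engine for both parts of the lemma.

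Part (b) is then immediate: if $d_u = d_{u'}$ the four terms cancel, since $f(d_u,d_v)=f(d_{u'},d_v)$ and $f(d_u,d_{v'})=f(d_{u'},d_{v'})$, and if $d_v = d_{v'}$ they cancel under the other pairing. For part (a) I would group the right-hand side as $\phi(d_v) - \phi(d_{v'})$, where $\phi(y) := f(d_u,y) - f(d_{u'},y)$, and show that $\phi$ is strictly decreasing; then $d_v < d_{v'}$ forces $\phi(d_v) > \phi(d_{v'})$ and hence $\abc(T) > \abc(T(v,v'))$. The monotonicity of $\phi$ is exactly what Proposition~\ref{prop:2} delivers: setting $x = d_{u'}$, $a = d_u - d_{u'} > 0$ and $b = 0$, the function $g(x,y) = f(x+a,y) - f(x,y)$ equals $\phi(y)$ and is decreasing in $y$; the hypotheses $x \geq 2$ and $a \geq 0$ are met because $u'$ has the son $v'$, so $d_{u'} \geq 2$, and $d_u > d_{u'}$.

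The one point requiring care—and the only genuine obstacle—is the boundary case $d_v = 1$, which arises when $v$ is a leaf. Here Proposition~\ref{prop:2} does not apply directly, since its constraint $0 \le b < y-1$ fails at $y = 1$. I would dispatch this by direct evaluation: because $f(d_u,2) = \sqrt{d_u/(2d_u)} = 1/\sqrt{2}$ independently of $d_u$, we get $\phi(2) = 0$, whereas $\phi(1) = \sqrt{1 - 1/d_u} - \sqrt{1 - 1/d_{u'}} > 0$ since $d_u > d_{u'}$. Combining $\phi(1) > 0 = \phi(2)$ with the monotonicity of $\phi$ on $[2,\infty)$ yields $\phi(1) > \phi(d_{v'})$ for every $d_{v'} \geq 2 > 1 = d_v$, which completes the argument. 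In summary, the whole proof reduces to the degree-invariance identity together with one application of Proposition~\ref{prop:2}, the only subtlety being the explicit check at the leaf-degree endpoint.
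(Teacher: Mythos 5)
Your proof is correct and follows essentially the same route as the paper: the degree-invariance identity for the exchange followed by a single application of Proposition~\ref{prop:2}, only transposed (you take $a=d_u-d_{u'}$, $b=0$ and use monotonicity in $y$, whereas the paper takes $a=0$, $b=d_{v'}-d_v$ and uses monotonicity in $x$). Your explicit treatment of the leaf case $d_v=1$ --- where the proposition's hypotheses ($y\ge 2$, resp.\ $b<y-1$) fail --- is a boundary point the paper's own proof passes over silently, and your direct check $\phi(1)>0=\phi(2)\ge\phi(d_{v'})$ closes it correctly.
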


\begin{proof}
Equality in (b) is obvious since both trees have edges with same degrees. To prove (a), we apply Proposition \ref{prop:2} with $y=d_{v'}, a=0$ and $b=d_{v'}-d_v$. Observe that:
\begin{align*}
  \abc(T)-\abc(T(v,v')) & = f(d_u,d_v) + f(d_{u'},d_{v'}) - f(d_u,d_{v'}) - f(d_{u'},d_v)  \\
    & = \left( f(d_u,d_v)  - f(d_u,d_{v'})\right) - \left(f(d_{u'},d_v) - f(d_{u'},d_{v'})\right) \\
    & = g(d_{u},d_{v'}) - g(d_{u'},d_{v'}),
\end{align*}
which is positive by the proposition.
\end{proof}

Part (b) of the lemma motivates the following definitions. First of all, if the assumptions of the lemma hold and $d_u=d_{u'}$ or $d_v=d_{v'}$, then we say that $T(v,v')$ is obtained from $T$ by a \emph{similarity exchange}. Further, we say that two trees $T$ and $T'$ are \emph{similar} (or \emph{ABC-similar}) if $T'$ can be obtained from $T$ by a series of similarity exchange operations. When we treat ABC-minimal trees as rooted trees whose root is a vertex of maximum degree, we also treat any tree obtained by taking a different vertex of maximum degree as the root as being similar.  Note that similarity is an equivalence relation that preserves the ABC-index.  In order to characterize ABC-minimal trees, it suffices to describe one tree in each similarity class. Below we will introduce some special properties of ABC-minimal trees that will define a subclass called ABC-extremal trees.

For our next definition we will need a special linear ordering $\succ$ among the isomorphism classes of all rooted trees (with at most $n$ vertices). For two such trees $T$ and $T'$, we first compare their roots. If the root of $T$ has larger degree than the root of $T'$, then we set $T\succ T'$ (and we set $T'\succ T$ if the root of $T'$ has larger degree).  If the degrees are the same, both equal to $d\ge0$, we lexicographically compare their subtrees $T_1,\dots,T_d$ and $T_1',\dots,T_d'$ rooted by the sons of their roots, and we set $T\succ T'$ if the subtrees of $T$ are lexicographically larger. Note that these subtrees are lexicographically the same if and only if $T$ and $T'$ are isomorphic. (This can be easily proved by induction.)
We write $T\succeq T'$ if either $T\succ T'$ or $T$ and $T'$ are isomorphic as rooted trees.


An ABC-minimal tree $T$ is said to be \emph{ABC-extremal} if it is $\succ$-largest in its similarity class which is the same as greedy tree for a given degree sequence. Note that  this also included the best choice of the root among the vertices of maximum degree.\footnote{It will be shown later that there are at most two such vertices in any ABC-minimal tree.} The ABC-extremal trees have some additional properties that will be useful for us. Let us summarize some of them.

Let $T$ be an ABC-extremal tree. Then $T$ has the following properties:
\begin{itemize}
  \item[\rm (P1)] Let $u,v\in V(T)$. Suppose that $h(u)<h(v)$ and if $u$ is not the root then $v\notin T_u$. Then $T_{u}\succeq T_{v}$ and, in particular, $d_{u}\ge d_{v}$.
  \item[\rm (P2)] Let $h_2 = \min \{h(v)\mid d_v=2 \}$. If $T$ contains an edge $uu'$ such that $d_u = d_{u'} = 2$, then either $h(u)=h_2$ or $h(u')=h_2$.
  \item[\rm (P3)] Suppose that $u,v$ are non-root vertices with the same degree, $d_u=d_v$, and let $r=d_u-1$. Let $u_1,\dots,u_r$ be the sons of $u$ and let $v_1,\dots,v_r$ be the sons of $v$. Suppose that $T_{u_1}\succeq T_{u_2}\succeq \cdots \succeq T_{u_r}$ and $T_{v_1}\succeq T_{v_2}\succeq \cdots \succeq T_{v_r}$. If $h(u)<h(v)$, then $T_{u_r}\succeq T_{v_1}$. If $h(u)=h(v)$, then either $T_{u_r}\succeq T_{v_1}$ or $T_{v_r}\succeq T_{u_1}$. Assuming that $T_{u_r}\succeq T_{v_1}$, then we have, in particular, that $d_{u_1}\ge d_{u_2}\geq \cdots \geq d_{u_r} \geq d_{v_1}\geq d_{v_2}\geq \cdots \geq d_{v_r}$.
\end{itemize}

\begin{proof}
(P1) The proof is by induction on $h(u)$. If $u$ is the root, then the property is clear by the definition of similarity which includes exchanging the root with another vertex of maximum degree if that rooted tree is $\succ$-larger. Therefore we may assume that $u$ is not the root. Let $\hat u$ and $\hat v$ be the predecessors of $u$ and $v$, respectively. By the induction hypothesis, we have that $d_{\hat u} \ge d_{\hat v}$. Suppose, for a contradiction, that $T_u \prec T_v$. In particular, $d_u\le d_v$. Since $v\notin T_u$, we have $T_{u}\cap T_{v} = \emptyset$. By Lemma \ref{lem:similar}(a), we conclude that either $d_{\hat u} = d_{\hat v}$ or $d_u=d_v$. Therefore, $T(u,v)$ is obtained from $T$ by a similarity exchange. Since $T_v \succ T_u$, we conclude that $T(u,v) \succ T$, which contradicts the assumption that $T$ is ABC-extremal.

(P2) As proved in \cite{ref29} (see Lemma \ref{lem:2-2edge}), there is at most one such 2-2 edge in $T$. Suppose that $h(u)<h(u')$, and let $v$ be a degree-2 vertex with $h(v)=h_2$. If $h(u)>h_2$, then (P1) implies that $T_v\succeq T_u$. By Theorem \ref{thm:1.3} we see that the son of $v$ has degree 1 and since the son $u'$ of $u$
has degree 2, we have that $T_u\succ T_v$, a contradiction.

(P3) If $T_{u_r}\prec T_{v_1}$ and either $h(u)<h(v)$ or $T_{v_r}\prec T_{u_1}$, then one of the similarity exchanges $T(u_r,v_1)$ or $T(v_r,u_1)$ would yield a $\succ$-larger tree. This contradiction shows that (P3) holds.
\end{proof}

ABC-extremal trees and their properties (P1)--(P3) have been used frequently in previous works and were sometimes called ``greedy trees''.

At the end of the next section, we will show that (P1) holds also when $v$ is a successor of $u$.

\begin{figure}[htb]
    \centering
    \includegraphics[width=0.65\textwidth]{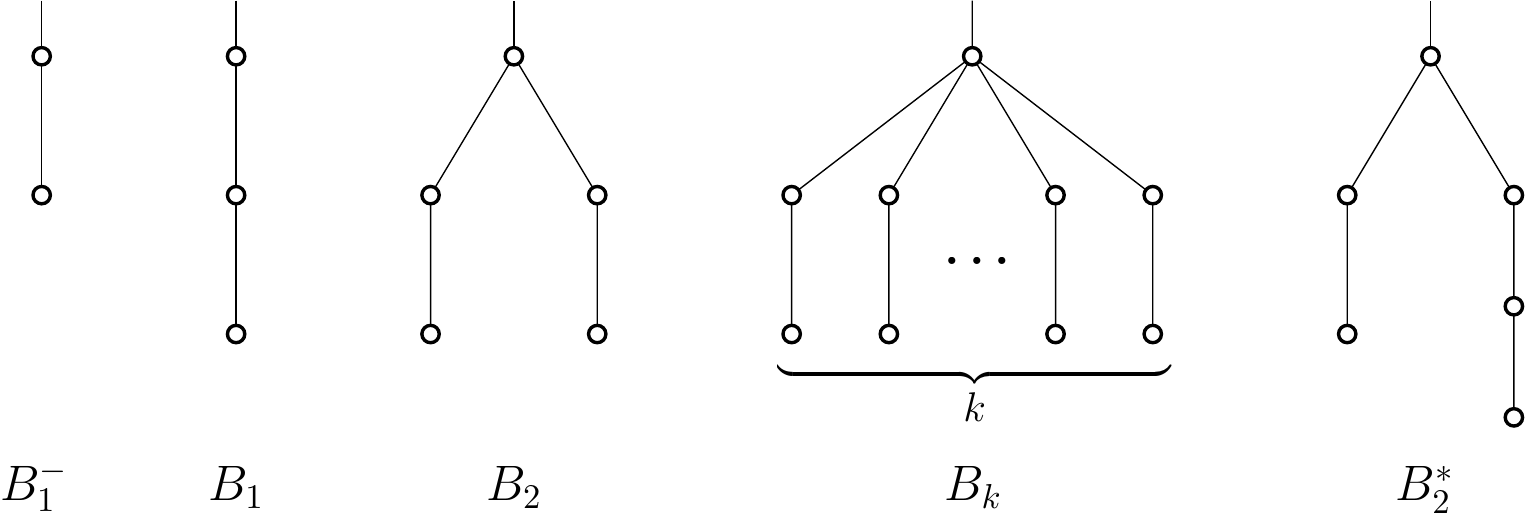}
    \caption{Definition of $B_k$-branches.}
    \label{fig:B_k}
\end{figure}

Suppose that all sons of a vertex $v\in V(T)$ are of degree 2 and all second descendants are of degree 1. Then the subtree $T_v$ is said to be a \emph{$B_k$-branch}\footnote{In some earlier papers, $B_1^-$-branches are called  $B_1$-branches.} with root $v$, where $k$ is the number of sons of $v$ (which is $d_v-1$ if $v\ne R$). See Figure \ref{fig:B_k}.  In our later figures, we will represent each $B_k$-branch by a triangle with the number $k$ next to it. If $k=3$, the number may be omitted.

Simulations and existing results \cite{ref18,ref19,ref20} show that ABC-minimal trees have lots of $B_3$-branches and only a small number of $B_k$-branches for $k\ne3$; see also our Corollary \ref{cor:endingbranches}.

If $T_v$ can be obtained from a $B_k$-branch by adding a new vertex and joining it to one of vertices of degree 1, then we say that this is a $B_k^*$-branch. This kind of subtrees may appear in ABC-minimal trees, but cannot occur more than once (see \cite{ref29}). The reason is the following result about \emph{2-2 edges} (edges whose both ends have degree 2, also known as pendent path of length three).

\begin{lemma}[\cite{ref29, no_pendent3}]
\label{lem:2-2edge}
Any ABC-minimal tree has at most one 2-2 edge and, if there is one, it is part of a $B_1$-branch. Furthermore, ABC-minimal trees of order $\geq415$ contain no 2-2 edges.
\end{lemma}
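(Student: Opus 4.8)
To establish Lemma~\ref{lem:2-2edge} I would separate it into three claims of increasing difficulty: (i) every 2-2 edge is the middle edge of a pendent path of length three, so it lies inside a $B_1$-branch; (ii) at most one such edge can occur; and (iii) for $n\ge 415$ not even one survives. Throughout I would lean on the subtree-exchange Lemma~\ref{lem:similar} and the monotonicity of Proposition~\ref{prop:2}, together with the repeated coincidence that $f(x,2)=\tfrac{1}{\sqrt2}$ for every $x\ge 2$ and $f(2,1)=f(2,2)=\tfrac{1}{\sqrt2}$, which makes each of the three edges of a pendent $P_3$ cost exactly $\tfrac{1}{\sqrt2}$. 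It is cleanest to analyze a maximal run $c_1c_2\cdots c_m$ of consecutive degree-$2$ vertices (with the parent of $c_1$ and the son of $c_m$ both of degree $\ne 2$) and to show that $m\le 2$ and that the son of $c_m$ is a leaf.

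For (i), let $ab$ be a 2-2 edge with $a$ the parent of $b$; since $d_b=2$, the vertex $b$ has a unique son $w$, and the goal is $d_w=1$. The possibility $d_w\ge 3$ I would rule out by relocating $T_w$: a high-degree subtree pinned strictly below the two degree-$2$ vertices $a,b$ violates the ordering forced on an extremal tree by property~(P1) and Theorem~\ref{thm:1.3}, so it can be exchanged upward via Lemma~\ref{lem:similar}(a) for a strict decrease, contradicting minimality. The remaining possibility $d_w=2$ merely lengthens the degree-$2$ run and is excluded by (ii). Combined with Theorem~\ref{thm:1.1}, which supplies the leaf at the bottom of the run, this forces $w$ to be a leaf, so $T_a$ is a $B_1$-branch.

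For (ii), suppose two 2-2 edges occur; by (i) they are the central edges of two identical $B_1$-branches. The plan is to rearrange the six vertices of these two pendent paths into an admissible configuration of strictly smaller ABC index, collecting their degree-$2$ vertices under a single parent of larger degree to form a branch of higher root degree while respecting Theorem~\ref{thm:1.1} (leaves must remain adjacent to degree-$2$ vertices) and redistributing the leftover vertex. Which rearrangement to use is dictated by the greedy order of properties~(P1)--(P3), and verifying that it lowers the index reduces to comparing finitely many values of $f$, where the coincidences $f(x,2)=f(2,1)=f(2,2)=\tfrac1{\sqrt2}$ do most of the work. This contradiction leaves at most one 2-2 edge, which by (i) sits in a single $B_1$-branch.

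The hard part is (iii), the explicit bound $n\ge 415$, where local moves no longer apply because a lone $B_1$-branch has no partner to merge with. Here I would compare globally: take the three vertices of the unique pendent path (two of degree~$2$ and one leaf) and repackage them into the $B_3$/$C$-branch structure that dominates large extremal trees, adjusting the degree of the attachment vertex, and compute the exact ABC difference in closed form. Because each of the three trapped vertices carries cost $\tfrac1{\sqrt2}\approx 0.70711$ while the asymptotic per-vertex cost is $\gamma_n/n\approx 0.67737$, the repackaging yields a fixed positive saving, offset by boundary corrections from the degree change at the attachment point; requiring the total saving to be positive gives an explicit inequality in $n$ whose threshold is $415$. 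I expect this quantitative step---extracting the exact constant $415$ rather than a mere $O(1)$ bound---to be the main obstacle, since it rests on an exact, not asymptotic, accounting of the competing global configurations.
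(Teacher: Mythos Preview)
The paper does not prove Lemma~\ref{lem:2-2edge}: it is quoted from \cite{ref29} and \cite{no_pendent3} and then used as an input throughout Sections~\ref{sect:3}--\ref{sect:5}. So your task is to give a proof that does not depend on anything downstream. For (i) and (ii) your local-exchange plan is in the spirit of \cite{ref29}, but watch the circularity: property~(P2) is derived in this paper \emph{from} Lemma~\ref{lem:2-2edge}, and (P1') comes via Corollary~\ref{cor:degrees decrease}, whose proof also invokes the lemma. You may freely use Lemma~\ref{lem:similar}, Theorems~\ref{thm:1.1} and~\ref{thm:1.3}, Proposition~\ref{prop:2} and the bare (P1), but not (P2) or (P1'). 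Your step~(i) also needs a disjoint exchange partner for $T_w$: since $w$ lies below the 2-2 edge, Lemma~\ref{lem:similar}(a) cannot be applied inside $T_a$, so you must locate a degree-$2$ vertex with a parent of degree $\ge 3$ in a \emph{different} subtree of the root and argue that one exists. In (ii) you should exhibit the six-vertex rearrangement explicitly rather than defer to (P1)--(P3).

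Part (iii) has a genuine gap. You propose to absorb the three vertices of the pendent $P_3$ into ``the $B_3$/$C$-branch structure that dominates large extremal trees'' and to compare $3/\sqrt{2}$ against the asymptotic per-vertex cost $c_0\approx 0.67737$. But the $C_{52}$-structure (Theorem~\ref{thm:large_structure}) and the constant $c_0$ (Corollary~\ref{cor:gamma_n asympt}) are established in this paper only for $n\ge 1078940$, and their proofs use Lemma~\ref{lem:2-2edge} from the outset (already in the first paragraph of the proof of Theorem~\ref{thm:2.1}). So your argument is circular, and even without the circularity the asymptotics cannot reach down to $n=415$. The proof in \cite{no_pendent3} does not pass through any global structure theorem; it is a self-contained local replacement in which the three vertices of the unique $B_1$-branch are redistributed among the other $B_k$-branches attached to the same high-degree vertex, with the threshold $415$ emerging from a finite case analysis that uses only the earlier bounds on the numbers of $B_1$-, $B_2$- and $B_4$-branches. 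To repair your approach you would need an argument of that kind, independent of the later sections of this paper.
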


For a tree $T$, let $B(T)$ be the set of those vertices different from the root that are of degree at least 3 that have a son of degree 2, whose son is of degree 1. Note that for every $k\ge1$, $B(T)$ contains all roots of $B_k$-branches and $B_k^*$-branches ($k\ge 2$). If a vertex $u\in B(T)$ is not a root of some $B_k$-branch, then it is referred to as an \emph{exceptional vertex} in $B(T)$ and its subtree $T_u$ is called \emph{$B$-exceptional branch}. Note that the root of any $B_k^*$-branch with $k\ge2$ is an exceptional vertex. Our next result shows that any ABC-extremal tree has at most one exceptional vertex in $B(T)$.

\begin{lemma}\label{lem:B}
Any ABC-extremal tree $T$ has at most one exceptional vertex in $B(T)$. Moreover, if $v$ is exceptional, then $d_v = \max\{d_u \mid u\in B(T)\}$.
\end{lemma}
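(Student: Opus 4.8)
The plan is to reduce everything to the second assertion. I would begin with an observation used throughout: for every $u\in B(T)$ the $\succeq$-smallest son-subtree is the two-vertex pendant path $P_2$ (a degree-$2$ vertex carrying a single leaf). Indeed $d_u\ge 3$, so by Theorem~\ref{thm:1.1} no son of $u$ is a leaf; hence every son-subtree of $u$ has a root of degree at least $2$ and is therefore $\succeq P_2$, while the membership $u\in B(T)$ forces one son-subtree to be exactly $P_2$. Consequently $u$ is exceptional precisely when its $\succeq$-largest son-subtree $T_{u_1}$ satisfies $T_{u_1}\succ P_2$.

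Next I would dispatch uniqueness assuming the maximality of the degree. Suppose $u,u'\in B(T)$ are both exceptional; by the second assertion $d_u=d_{u'}=M$, so property (P3) applies to the pair (both are non-root and share the degree $M$). If $h(u)<h(u')$ it yields $T_{u_r}\succeq T_{u'_1}$, but $T_{u_r}\cong P_2$ while $T_{u'_1}\succ P_2$ because $u'$ is exceptional, a contradiction. If $h(u)=h(u')$ then (P3) forces $T_{u_r}\succeq T_{u'_1}$ or $T_{u'_r}\succeq T_{u_1}$, and each possibility contradicts the exceptionality of one of the two vertices exactly as before. Hence at most one exceptional vertex exists, and the whole lemma follows once the second assertion is in place.

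So the heart is to show that if $v$ is exceptional then $d_v=M$. I would argue by contradiction, taking $w\in B(T)$ with $d_w>d_v$. By the observation $w$ has a son $w_r$ with $T_{w_r}\cong P_2$ (so $d_{w_r}=2$), and exceptionality gives a son $v_1$ of $v$ with $T_{v_1}\succ P_2$. The intended blow is the exchange $T(w_r,v_1)$: when $d_{v_1}\ge 3$ we have $d_w>d_v$ and $d_{w_r}=2<d_{v_1}$, so Lemma~\ref{lem:similar}(a) gives $\abc(T)>\abc(T(w_r,v_1))$, contradicting minimality; when $d_{v_1}=2$ the two swapped roots have equal degree, so by Lemma~\ref{lem:similar}(b) the exchange preserves the $\abc$-index, and relocating the strictly larger subtree $T_{v_1}$ onto the higher-degree vertex $w$ yields a $\succ$-larger tree, contradicting $\succ$-maximality. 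To license these exchanges I must know that $T_{w_r}$ and $T_{v_1}$ are disjoint and, in the similarity case, that $w$ has higher priority than $v$; this is where relative heights enter. Using property (P1), whenever $w$ and $v$ are incomparable the inequality $d_w>d_v$ forces $h(w)\le h(v)$, and if $h(w)<h(v)$ one obtains the contradiction directly from (P1) applied to the disjoint sons $w_r$ and $v_1$ (namely $P_2\cong T_{w_r}\succeq T_{v_1}\succ P_2$), while $h(w)=h(v)$ is exactly the exchange situation above with $w$ of higher priority.

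The main obstacle is the ancestry case, where $w$ is a descendant of $v$. Comparing the $P_2$-son $v_r$ of $v$ (which lies in a different branch from $w$, since $T_{v_r}\cong P_2$ cannot contain the degree-$\ge 3$ vertex $w$) with $w$ via (P1) reduces this to the single residual configuration in which $w$ is a son of $v$ with $d_w>d_v$, that is, a child whose degree exceeds its parent's. I expect excluding this to be the crux: I would attack it by taking the topmost edge $xy$ along a root-to-$w$ path at which the degree increases, so that degrees are non-increasing above $x$, and then applying Theorem~\ref{thm:1.3} to the low-degree vertex $x$ together with its parent and the son $y$; the borderline subcase in which the parent of $x$ has the same degree as $x$ needs a separate short argument via property (P3). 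Once child-degree-exceeds-parent is ruled out, every branch of the argument closes and $d_v=M$ follows.
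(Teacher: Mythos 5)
Your argument is correct, and its engine is the same as the paper's: exchange the $P_2$-son of one vertex of $B(T)$ with the non-$P_2$ son of the other, and invoke Lemma \ref{lem:similar} (a strict drop in $\abc$ when the relevant degrees are strictly ordered, and a $\succ$-improving similarity exchange in the tie cases). The organization, however, is reversed, and the reversal is where the two proofs genuinely differ. The paper first proves uniqueness by performing this exchange directly between two hypothetical exceptional vertices $v_1,v_2$ (swapping $v_1'$ with $u_2$, falling back to $T(v_2',u_1)$ when $d_{v_1}=d_{v_2}$), and then settles the degree-maximality claim in one sentence as ``a similar exchange.'' You prove degree-maximality first and then get uniqueness almost formally from (P3): two exceptional vertices would both carry the maximal degree $M$, and (P3) would force the $P_2$-son of one to dominate the $\succ P_2$-son of the other. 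What your route buys is precisely the bookkeeping the paper's terse proof omits: you check disjointness of the swapped subtrees, you isolate the ancestor/descendant configuration of the two $B(T)$-vertices and reduce it, via (P1) applied to the sons, to a child whose degree exceeds its parent's, which you then exclude by Theorem \ref{thm:1.3} together with a (P3) argument for the equal-degree borderline (the paper simply asserts this monotonicity as a consequence of Theorem \ref{thm:1.3}, and indeed the assertion has exactly the borderline you flag). Two small remarks: your case analysis names only ``$w$ a descendant of $v$'' as the ancestry case, but ``$v$ a descendant of $w$'' also occurs; it is killed by the very same application of (P1) to the sons $w_r$ and $v_1$, since one only needs $v_1\notin T_{w_r}$, which holds because every vertex of $T_{w_r}\cong P_2$ has degree at most $2<d_v\le d_{v_1}+1$. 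And your claim that relocating the larger subtree onto the higher-priority vertex yields a $\succ$-larger tree is asserted rather than proved, but this is exactly the level of rigor the paper itself adopts in (P1), (P3) and in its own proof of this lemma, so it is not a defect relative to the source.
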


\begin{proof}
Theorem \ref{thm:1.1} implies that a vertex in $B(T)$ cannot have a son of degree 1. Thus, an exceptional vertex either has a son of degree at least 3, or it has a son of degree 2 that is not incident to a vertex of degree 1.

Suppose that there are two exceptional vertices, $v_1$ and $v_2$, where $v_i$ has a son $v_i'$ of degree 2 (whose son is a degree-1 vertex) and also has a son $u_i$ that is not the father of a vertex of degree 1 ($i=1,2$). We select $u_i$ to be of degree more than 2 if possible. Suppose that $d_{v_1}\geq d_{v_2}$.
Lemma \ref{lem:similar} shows that $\abc(T(v_1',u_2))\le \abc(T)$, where the inequality is strict unless $d_{v_1}=d_{v_2}$ or $d_{u_2}=d_{v_1'}=2$. Since $T$ is ABC-minimal, we have one of the two equalities. In either case, replacing $T$ with the tree $T(v_1',u_2)$ is a similarity exchange. If $d_{v_1}>d_{v_2}$, then this exchange gives a $\succ$-larger tree, contradicting extremality of $T$. The same may give a contradiction if $d_{v_1}=d_{v_2}$; but if it does not, then we consider $T(v_2',u_1)$, and it is easy to see that this yields a $\succ$-larger tree. This shows that there is at most one exceptional vertex.

Suppose now that $T$ has precisely one exceptional vertex $v\in B(T)$. If $d_v$ is not the largest in $\{d_u\mid u\in B(T)\}$, doing a similar exchange with the vertex $u$ in $B(T)$ of maximum degree gives us a contradiction to the extremality of $T$.
\end{proof}

We will show in Lemma \ref{no_b_exception} that in addition to $B_k^*$ ($k\ge2$) only one type of $B$-exceptional branches may exist in any ABC-minimal tree.

Our next goal is to show that $B_k$-branches may occur only for $k\le5$.

\begin{lemma}[\cite{ref18,DuFonseca}]
\label{B_k branches are small}
If an ABC-minimal tree contains a $B_k$-branch, then $k\le 5$. If it contains a $B_k^*$-branch, then $k\le3$.
\end{lemma}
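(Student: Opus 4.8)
The plan is to argue by contradiction through a vertex-count-preserving local surgery on a $B_k$-branch (resp.\ $B_k^*$-branch), showing that a sufficiently large $k$ always admits a rearrangement of strictly smaller ABC index, contradicting minimality. The starting observation is that the internal edges of these branches are essentially independent of $k$: since $f(k+1,2)=f(2,1)=f(2,2)=\tfrac{1}{\sqrt2}$, a $B_k$-branch rooted at $v$ with parent $\hat v$ of degree $D$ contributes exactly $f(D,k+1)+k\sqrt2$ to $\abc(T)$ and spans $2k+1$ vertices, while a $B_k^*$-branch contributes $f(D,k+1)+(2k+1)\tfrac{1}{\sqrt2}$ over $2k+2$ vertices. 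Thus all of the relevant $k$-dependence is concentrated in the single edge $f(D,k+1)$ joining the branch to its parent, together with the count of vertices over which that edge is amortized.

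First I would replace the degree-$(k+1)$ root $v$ by a small gadget of bounded-degree vertices, each rooting a smaller $B$-branch, chosen so that the total number of vertices is unchanged. The natural choice is a degree-$3$ vertex $v$ carrying two branches $B_a$ and $B_b$ with $a+b=k-1$ (one checks $3+2a+2b=2k+1$), which yields
\begin{equation*}
  \abc(T)-\abc(T') = f(D,k+1) - f(D,3) - f(3,a+1) - f(3,b+1) + \sqrt2 .
\end{equation*}
By Theorem~\ref{thm:1.1} and Theorem~\ref{thm:1.3} the resulting tree is again a legitimate configuration, and by property~(P1) the parent degree satisfies $D\ge k+1$. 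I would then invoke Proposition~\ref{prop:2}: rewriting the right-hand side through the function $g$, its monotonicity in $D$ shows that the sign of the difference is controlled by its behaviour at the extreme admissible values of $D$, so the question collapses to a finite comparison of concrete $f$-values. The threshold falls strictly between $k=5$ and $k=6$; for a $B_k^*$-branch the extra vertex shifts the same balance so that the threshold falls between $k=3$ and $k=4$, which accounts for the two different bounds.

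The hard part will be that the gain from splitting is genuinely small, and in fact can be negative, at the smallest admissible parent degree $D=k+1$, so a single generic split does not by itself defeat, say, a $B_6$-branch hanging off a degree-$7$ vertex. To close this gap I would combine the surgery with the ordering properties (P1)--(P3) and Lemma~\ref{lem:similar}: a large $B_k$-branch forces, through (P1) and (P3), a rigid sibling structure around $\hat v$, and this either raises the effective degree seen by the top edge (making the split beneficial by the monotonicity of Proposition~\ref{prop:2}) or directly supplies a beneficial similarity exchange. Optimizing the gadget (allowing also a degree-$5$ intermediate vertex rooting up to four smaller branches, and choosing the split sizes $a,b$ to maximize the drop) and ranging $D$ over its finitely many relevant values then leaves only a bounded case analysis, in which every configuration with $k\ge6$ (resp.\ $k\ge4$ for $B_k^*$) strictly decreases the index. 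Throughout, Lemma~\ref{lem:2-2edge} guarantees that these rearrangements do not secretly rely on forbidden $2$-$2$ edges. The main obstacle, then, is not any single inequality but the bookkeeping needed to certify that the reduction survives uniformly over all admissible local degree environments.
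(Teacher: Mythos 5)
Your setup is right (the vertex counts, the observation that all $k$-dependence sits in the top edge, and the displayed formula for the degree-$3$ gadget are all correct), but the surgery you build on it fails at exactly the case that matters, and the rescue you propose points in the wrong direction. Take $k=6$ and the best split $(a,b)=(2,3)$: at the smallest admissible parent degree $D=7$ your difference is
$f(7,7)-f(7,3)-f(3,3)-f(3,4)+\sqrt2\approx 0.495-0.617-0.667-0.645+1.414=-0.020<0$,
and since $f(D,7)-f(D,3)=-g(D,7)$ is \emph{decreasing} in $D$ by Proposition~\ref{prop:2}, the difference only gets more negative as $D$ grows (tending to about $-0.10$). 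So the degree-$3$ gadget never beats a $B_6$-branch, for any parent degree, and your claim that the sibling structure "raises the effective degree seen by the top edge, making the split beneficial by monotonicity" has the monotonicity backwards. The underlying reason is that inserting a new low-degree intermediate vertex forces you to pay $f(D,3)\ge\sqrt{1/3}\approx0.577$ for the top edge plus two more expensive $f(3,\cdot)$ edges; the degree-$5$ variant fares no better at $k=6$ (the only admissible split $\sum a_i=4$ gives four $B_1$'s and a difference $f(7,7)-f(7,5)\approx-0.04$).

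The paper's proof avoids this by \emph{not} inserting an intermediate vertex: it deletes $u$ and attaches the two replacement branches ($B_{k-4}$ and $B_3^*$ for a $B_k$ with $k\ge6$) directly to the parent $\hat u$, so that $d_{\hat u}$ increases by one. This changes the sign of the computation for two reasons: the expensive $f(D,3)$ edge is replaced by $f(D+1,3)+f(D+1,4)$ against a gain of $f(D,k+1)+\tfrac{1}{\sqrt2}$ (already positive at $D=7$, about $+0.03$), and every other edge at $\hat u$ contributes an additional positive term $f(D,d_c)-f(D+1,d_c)$. Your proposal is missing this idea — that the replacement must raise the parent's degree rather than preserve it — and no amount of case analysis over sibling configurations or split sizes $a,b$ will recover it, because your difference is negative uniformly in $D$ for $k=6$. (You also leave the $B_k^*$ side, in particular the boundary replacement $B_4^*\to B_3^{**}$ that the paper needs, entirely to a hand-waved "shift of the same balance".)
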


The first claim in the lemma was essentially proved in \cite{ref18} with a different approach, but the proof uses some additional assumptions that we do not have. The second claim about $B_k^*$-branches can be found in \cite{DuFonseca}. We include a sketch of our own proof, some of whose easier details are left to the reader.

\begin{proof}
Let $u$ be the root of a $B_k$-branch ($B_k^*$-branch) considered. We may assume that $u$ is not the root. Let $\hat u$ be the father of $u$. For $B_k$ ($k\ge 6$), we replace $T_u$ with $B_{k-4}$ and $B_3^*$, both attached to $\hat u$. Note that the degree of $\hat u$ increases by one. Let $T'$ be the resulting tree. Now it is easy to see that $\abc(T) - \abc(T')>0$, which is a contradiction.

Similarly, for $B_k^*$ ($k\ge5$), we replace $T_u$ with $B_{k-3}$ and $B_3$ attached to $\hat u$. And for $B_4^*$, we replace $T_u$ with the tree $B_3^{**}$ shown in Figure \ref{B32}. Details are omitted.
\end{proof}

Dimitrov \cite{ref18} also proved that $B_5$-branches can be excluded under the assumption that there is a $B_2$ or $B_3$-branch as a sibling\footnote{For example, the case where the root has only $C_k$ branches and one $B_5$ as its children is not considered in \cite{ref18}.}. Below we give a slightly stronger result.

\begin{lemma}\label{B_k_equal}
Let $T$ be an ABC-minimal tree.
If a $B_k$-branch and a $B_l$-branch are siblings, then $|k-l|\le 1$.
\end{lemma}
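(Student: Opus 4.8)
The plan is to argue by contradiction: assuming that a $B_k$-branch and a $B_l$-branch are siblings with $k\ge l+2$, I will perform a local surgery that strictly lowers the ABC index, contradicting minimality. Let $w$ be the common father of the two branch roots $u$ and $u'$, where $T_u$ is the $B_k$-branch and $T_{u'}$ is the $B_l$-branch, and write $D=d_w$. Since $u,u'$ are non-root vertices, $d_u=k+1$ and $d_{u'}=l+1$. The surgery removes from $u$ one of its sons together with that son's leaf (a pendant path of length two) and re-attaches this path to $u'$; call the result $T'$. This turns the $B_k$-branch into a $B_{k-1}$-branch and the $B_l$-branch into a $B_{l+1}$-branch while preserving the number of vertices, so $T'$ is another tree of order $n$ and hence a legitimate competitor.

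Next I would isolate exactly which edges change. Only the edge $wu$ (degree pair $(D,k+1)\to(D,k)$), the edge $wu'$ (pair $(D,l+1)\to(D,l+2)$), and the internal edges of the two branches are affected; the number of leaf edges (each of type $f(2,1)=1/\sqrt2$) is unchanged. The decisive simplification is that $f(x,2)=\sqrt{x/(2x)}=1/\sqrt2$ is constant, so every edge joining a branch root to one of its degree-2 sons contributes $1/\sqrt2$ regardless of the root's degree. Consequently the total ABC-contribution of a $B_m$-branch hanging from $w$ is $\Phi_D(m)=f(D,m+1)+m\sqrt2$, in which the only nonlinear dependence on $m$ sits in the single edge $f(D,m+1)$ to the father. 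Hence $\abc(T')-\abc(T)=\Phi_D(k-1)+\Phi_D(l+1)-\Phi_D(k)-\Phi_D(l)$, and it suffices to show this is negative whenever $k\ge l+2$.

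This reduces everything to strict convexity of $m\mapsto \Phi_D(m)$, i.e.\ of the map $y\mapsto f(D,y)=\tfrac{1}{\sqrt D}\,(1+(D-2)/y)^{1/2}$, since a strictly convex $\Phi_D$ has strictly increasing forward differences and then $\Phi_D(k)-\Phi_D(k-1)>\Phi_D(l+1)-\Phi_D(l)$ for $k-1\ge l+1$, giving $\abc(T')<\abc(T)$. A direct second-derivative computation shows that for $D\ge3$ (so that the constant $a=D-2$ is positive) this function is strictly convex in $y>0$. The one degenerate case is $D=2$; but then $w$ would necessarily be the root and every other vertex would have degree at most $2$, forcing $k,l\le1$ and making the conclusion automatic. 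I expect the bookkeeping of the affected edges and the verification that the surgery produces a valid competitor to be the only delicate points; once the identity $f(x,2)=1/\sqrt2$ collapses the internal edges to a linear term, the convexity estimate is routine.
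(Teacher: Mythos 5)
Your proof is correct and follows essentially the same route as the paper: the identical surgery (moving one pendant path from the $B_k$-branch to the $B_l$-branch), the same observation that $f(\cdot,2)$ is constant so only the two edges at the common father change, and the same key inequality, which the paper obtains from Proposition \ref{prop:2} (monotonicity of differences of $f$ in the second argument) and you obtain equivalently by checking strict convexity of $y\mapsto f(D,y)$ directly. Your separate handling of the degenerate case $D=2$ is a harmless addition; the paper instead notes $d\ge k+1$ via Theorem \ref{thm:1.3}.
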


\begin{proof}
Suppose that $k\ge l$ and let $t=k-l$.
Let us assume that a $B_k$-branch and a $B_{k-t}$-branch exist as siblings in $T$. Let the parent of $B_k$ and $B_{k-t}$ be a vertex of degree $d$. Theorem \ref{thm:1.3} implies that for every path starting at the root, the vertex-degrees along the path never increase, thus we have $d\ge k+1$.
By detaching one vertex of degree 2 from $B_k$ and attaching it to $B_{k-t}$ we obtain a tree $T'$ in which $B_k$ is replaced by $B_{k-1}$ and $B_l$ with $B_{l+1}$. Since $f(2,x)=\sqrt{2}/2$ is independent of $x$, we have
$$\abc(T)-\abc(T') = f(d,k+1)+f(d,k-t+1) - f(d,k) - f(d,k-t+2).$$
For fixed $d$ and $k$, this difference is decreasing in terms of $k-t+1$ (by Proposition \ref{prop:2} used on the second and the last term with $a=0$, $b=1$ and $y=k-t+2$). This means that the difference is (strictly) increasing in terms of $t$. Since $\abc(T)-\abc(T') = 0$ when $t=1$, we conclude that for $t\geq 2$  the difference is positive and we can apply the suggested change to obtain a contradiction to ABC-minimality of $T$.
\end{proof}

As in the above proof, we will frequently compare the ABC-index of a tree $T$ with that of a modified tree $T'$. To make the notation shorter we will write
\[\Delta(T,T') = \abc(T) - \abc(T').\]

We define a \emph{$C_k$-branch} as a subtree $T_v$, in which $v$ has precisely $k$ sons $v_1,\dots,v_k$, and their subtrees $T_{v_1},\dots,T_{v_k}$ are all $B_3$-branches. In our figures, we will represent a $C_k$-branch as a square with $k$ written inside the square.

\begin{lemma}[\cite{no_pendent3}] \label{lemma:C}
Let $T$ be an ABC-minimal tree.
If there are $C_k$-branch and $C_l$-branch as siblings, then $|k-l|\le 1$.
\end{lemma}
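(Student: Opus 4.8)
The plan is to argue by contradiction in the spirit of Lemma \ref{B_k_equal}: assuming $k\ge l$ with $k-l\ge 2$, I would rebalance the two sibling branches by transferring a single $B_3$-branch from the larger $C_k$-branch to the smaller $C_l$-branch, thereby turning them into a $C_{k-1}$- and a $C_{l+1}$-branch and producing a tree $T'$, and then show that $\Delta(T,T')>0$. Before any computation I would record two structural reductions. First, a $C_2$-branch has a degree-$3$ root adjacent to two roots of $B_3$-branches, each of degree $4$, and a $C_1$-branch forces the degree to climb from $2$ up to $4$ along a root-path; both violate the fact that degrees strictly decrease along paths issuing from the root (Theorem \ref{thm:1.3} and Corollary \ref{cor:degrees decrease}), so we may assume $l\ge 3$. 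Second, writing $d$ for the degree of the common parent $P$ of the two branches, the edge from $P$ to the $C_k$-root is a $(d,k+1)$-edge; since equal-degree edges are forbidden except for the single edge possibly incident with the root, we have $d\ge k+2$ (the residual equal-degree case forces the maximum degree to be tiny and is covered by the computer verification \cite{comp1100} for small~$n$).

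For the exchange itself, the transferred $B_3$-branch keeps all of its internal edges, and the total number of $B_3$-branches below $P$ is unchanged, so every degree-$4$/degree-$2$ and degree-$2$/degree-$1$ edge inside the $B_3$'s cancels in $\Delta(T,T')$. What survives are the two edges from $P$ to the branch roots, together with the edges joining each branch root to its $B_3$-roots, whose degrees change because the branch-root degrees pass from $k+1,l+1$ to $k,l+2$. Collecting terms gives
\begin{align*}
\Delta(T,T') &= \bigl[f(d,k+1)-f(d,k)\bigr] - \bigl[f(d,l+2)-f(d,l+1)\bigr] \\
&\quad + \bigl[k\,f(k+1,4)-(k-1)f(k,4)\bigr] - \bigl[(l+1)f(l+2,4)-l\,f(l+1,4)\bigr].
\end{align*}
Unlike in Lemma \ref{B_k_equal}, where the inner edges all equalled $f(2,\cdot)=\tfrac{\sqrt2}{2}$ and dropped out, here they contribute a genuine term. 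Setting $\psi(m)=f(d,m+1)+m\,f(m+1,4)$, the identity above reads $\Delta(T,T')=\psi(k)+\psi(l)-\psi(k-1)-\psi(l+1)$, so the lemma is precisely the four-point convexity inequality for $\psi$ when $k\ge l+2$.

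To prove positivity I would split $\psi$ into the term $f(d,\cdot)$, which is strictly convex in its second argument (its forward differences $f(d,m+1)-f(d,m)$ increase in $m$, by Proposition \ref{prop:2} with $a=0,b=1$), and the term $\phi(m)=m\,f(m+1,4)$, whose forward differences decrease by a direct computation, so that $\phi$ is concave. Thus $\Delta(T,T')$ is a positive convex surplus from $f(d,\cdot)$ minus a concave deficit from $\phi$, and everything hinges on the former dominating. Proposition \ref{prop:2} then does the remaining work in two steps: comparing the two forward-difference sequences shows that the $f(d,\cdot)$-contribution is monotone in $d$, so it suffices to treat the extremal admissible value $d=k+2$; and since $\Delta(T,T')$ grows as the gap $k-l$ grows, the tightest case is $l=k-2$, where the inequality reduces to a one-parameter statement in $k$ that can be settled by the same difference estimates (with the finitely many small $k$ absorbed into \cite{comp1100}).

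The main obstacle is that the convex surplus and the concave deficit are of the same order and very nearly cancel: the net value of $\Delta(T,T')$ is only on the order of $10^{-4}$, and it further decays as $k\to\infty$ while remaining strictly positive. Consequently a crude bound on $f$ is hopeless, and one must produce a sharp, sign-correct comparison of the two forward-difference sequences; this is exactly where the precise monotonicity guaranteed by Proposition \ref{prop:2}—rather than any rough estimate—becomes indispensable, both for the reduction to $d=k+2$ and for the boundary inequality there.
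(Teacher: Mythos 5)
The paper itself does not prove this lemma---it is quoted from \cite{no_pendent3}---so the only internal benchmark is the proof of Lemma \ref{B_k_equal}, whose sibling-rebalancing template you follow. Your bookkeeping is correct: the internal edges of the transferred $B_3$-branches cancel, your formula for $\Delta(T,T')$ is right, the reformulation as the discrete convexity statement $\psi(k)+\psi(l)-\psi(k-1)-\psi(l+1)>0$ for $\psi(m)=f(d,m+1)+m\,f(m+1,4)$ is exactly what must be shown, and the reductions $l\ge3$ and $k\le142$ (via Lemma \ref{lem:C_k}) are sound, so only finitely many pairs $(k,l)$ occur.

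The gap is that the decisive inequality is asserted rather than proved, and the tool you lean on cannot prove it. In Lemma \ref{B_k_equal} the inner edges contribute the constant $f(2,\cdot)=\sqrt2/2$ and vanish from $\Delta(T,T')$, so what remains is a single difference of values of $f(d,\cdot)$ and Proposition \ref{prop:2} finishes the job. Here the inner edges contribute the concave term $\phi(m)=m\,f(m+1,4)$, and the required statement is that the positive second difference of $f(d,\cdot)$ strictly exceeds the magnitude of the negative second difference of $\phi$---a comparison of two second-order quantities of comparable size (both are of order $10^{-5}$ at $k=52$ and of order $10^{-6}$ at $k=142$). Proposition \ref{prop:2} only gives monotonicity of a single difference $f(x+a,y-b)-f(x,y)$ in each variable; it says nothing about how two such difference sequences compare. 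Hence it justifies neither your reduction to $d=k+2$ (which needs monotonicity in $d$ of $\bigl[f(d,l+1)-f(d,l+2)\bigr]-\bigl[f(d,k)-f(d,k+1)\bigr]$, itself a second-order fact about $\partial f/\partial d$) nor the final positivity at $d=k+2$, $l=k-2$. Both claims are true, but they require either an explicit verification over the finite range $5\le k\le 142$ (after a separate argument controlling the $d$-dependence, since $d$ is unbounded) or a sharper expansion of $f$; ``the same difference estimates'' do not suffice, and no such computation is supplied. A second, smaller error: the discarded case $d=k+1$ forces $P=R$ and $\Delta=k+1\le143$, but such a tree can have on the order of $10^5$ vertices, so it is not absorbed by the $n\le1100$ verification of \cite{comp1100}; the inequality does still hold there, so this case should be retained in the verification rather than dismissed.
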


\begin{lemma}\label{lem:C_k}
No ABC-minimal tree contains a $C_k$-branch with $k\geq 143$.
\end{lemma}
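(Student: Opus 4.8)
The plan is to argue by contradiction using an exchange argument in the spirit of Lemmas \ref{B_k branches are small} and \ref{B_k_equal}. Suppose $T$ is ABC-minimal and contains a $C_k$-branch with $k \ge 143$, rooted at a vertex $v$ with $d_v = k+1$, and let $\hat v$ be the father of $v$, so that $d_{\hat v} =: d \ge k+1$ by Theorem \ref{thm:1.3}. The idea is to show that such a wide branch is inefficient: the $k$ edges joining $v$ to its $B_3$-roots all carry the weight $f(k+1,4)$, which is already close to its limiting value $\tfrac12$, and redistributing some of the $B_3$-branches lowers the total ABC-index. Concretely, I would detach $t$ of the $B_3$-branches from $v$ and reattach them higher up (either directly to $\hat v$, or regrouped into a second, more balanced $C$-branch), always taking care that the modified tree $T'$ has exactly the same order $n$ as $T$; this is the delicate bookkeeping step, since forming a fresh $C$-root costs one extra vertex and must be compensated by an auxiliary gadget of matching size, exactly as $B_3^*$ and $B_3^{**}$ were used in the proof of Lemma \ref{B_k branches are small}.

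The computation would then estimate $\Delta(T,T') = \abc(T) - \abc(T')$. Moving $t$ branches from $v$ up to $\hat v$ changes $d_v$ from $k+1$ to $k+1-t$ and raises $d_{\hat v}$ from $d$ to $d+t$. The contribution splits into three parts: the neck edge $f(d,k+1) - f(d+t,k+1-t)$; the reorganized $v$-to-$B_3$ edges $k\,f(k+1,4) - (k-t)\,f(k+1-t,4) - t\,f(d+t,4)$; and the change on the remaining edges at $\hat v$, namely $\sum_w \bigl(f(d,d_w) - f(d+t,d_w)\bigr)$. Since every neighbour of $\hat v$ has degree at least $2$ (a degree-$1$ neighbour is impossible by Theorem \ref{thm:1.1} as $d\ge144$), Proposition \ref{prop:2} guarantees that each summand in the last part is nonnegative, and strictly positive for every neighbour of degree at least $3$. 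The crucial point is that a single wide $C_k$-branch forces its environment to be dense: by Lemma \ref{lemma:C} any sibling $C$-branch of $v$ has order within $1$ of $k$, so if $\hat v$ has several $C$-branch children they are all wide, and the accumulated gain from $\sum_w$ (roughly $d-1$ terms, each of order $\sqrt{k}/d^2$) outweighs the small losses coming from the neck and the reorganized edges. Optimizing the number $t$ of transferred branches and the split sizes against the exact, non-asymptotic values of $f$ is what produces the threshold $k \ge 143$.

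The main obstacle is that the underlying optimization is extremely flat: the per-vertex cost of a $C_k$-branch has only a shallow minimum near $k=52$ and grows very slowly afterwards, so every elementary exchange changes the index only at second order, and the sign of $\Delta(T,T')$ is decided by lower-order terms in the expansions of $f(k+1,4)$, $f(d,k+1)$ and $f(d+t,d_w)$. Consequently the estimates cannot be carried out asymptotically; they require sharp, explicit bounds on these differences (most cleanly obtained through the monotonicity of the function $g$ in Proposition \ref{prop:2}) to certify a strict decrease precisely at $k=143$. A secondary difficulty, which I expect to demand a short case analysis, is the situation in which $\hat v$ has essentially only the one wide child, so that $\sum_w$ contributes little; here $d_{\hat v}$ must itself be close to $k+1$, forcing an exceptional equal-degree edge at the root, and this configuration has to be excluded by a separate rebalancing between $\hat v$ and $v$.
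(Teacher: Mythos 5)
There is a genuine gap here: your proposal correctly identifies the general method (a local exchange whose sign is controlled via the monotonicity of $g$ in Proposition \ref{prop:2}), and one of the two exchanges you float in passing --- regrouping into a second, balanced $C$-branch --- is in fact the one the paper uses. The paper's proof replaces the single $C_k$-branch by \emph{two} $C_{(k-1)/2}$-branches attached to the parent, dismantling the one leftover $B_3$ into the extra branch-root plus three length-2 paths that turn three $B_3$'s into $B_4$'s (this is exactly the vertex-count bookkeeping you flag as delicate). But the exchange you actually develop --- detaching $t$ of the $B_3$'s and reattaching them to the father $\hat v$ --- is a different operation, and you never establish that its $\Delta(T,T')$ is positive. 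Your argument for positivity rests on the claim that the siblings of $v$ are forced to be dense so that $\sum_w\bigl(f(d,d_w)-f(d+t,d_w)\bigr)$ dominates, but Lemma \ref{lemma:C} only constrains sibling \emph{$C$-branches}; the father may have arbitrarily many degree-4 children ($B_3$-branches), in which case each summand is small and the balance against the neck-edge loss $f(d,k{+}1)-f(d{+}t,k{+}1{-}t)<0$ is precisely the second-order competition you yourself describe as undecided. Saying the sign ``is decided by lower-order terms'' and ``requires sharp, explicit bounds'' is an accurate diagnosis, but it is the entire content of the lemma, and the proposal does not supply it.

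Equally important, you offer no mechanism for reducing the infinite family of cases ($k\ge 143$, $d\ge k+1$ arbitrary, arbitrary neighbour degrees $d_{z_i}$, and your extra parameter $t$) to something checkable. This is the technical heart of the paper's proof: after bounding the worst case by setting $d_{z_i}=4$, the lower bound on $\Delta(T,T')$ is shown to be increasing in $k$ term by term --- the only nontrivial term, $f(d_R,k{+}1)-f(d_R{+}1,\tfrac{k+1}{2})$, is handled by telescoping it into a sum of differences each monotone in $k$ --- so that it suffices to verify positivity at $k=143$ as a function of the single remaining variable $d_R\ge k$. Without an analogous reduction, your plan cannot terminate in a finite verification, and the threshold $143$ cannot be certified. (A minor additional point: your fallback case analysis invokes the equal-degree-edge restriction of Theorem \ref{thm:2.1}, which appears later in the paper; this is not circular, but the paper's own argument avoids any such dependency.)
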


\begin{proof}
Assume that $T$ is an ABC-minimal tree with a $C_k$-branch, where $k\geq 143$. We will assume that $k$ is odd. For the even case only some small modifications are needed. We can replace the $C_k$-branch with two $C_{k'}$-branches, where $k'=\frac{k-1}{2}$, see Figure \ref{fig:C_k}. More precisely, the $B_3$-branches within $C_k$ are divided evenly between the two $C_{k'}$-branches, and the remaining $B_3$ is replaced by three paths attached to three $B_3$ branches (which turns them into $B_4$) as indicated in Figure \ref{fig:C_k}.  Let $T'$ be the resulting tree. We have:

\begin{figure}[htb]
    \centering
    \includegraphics[width=0.85\textwidth]{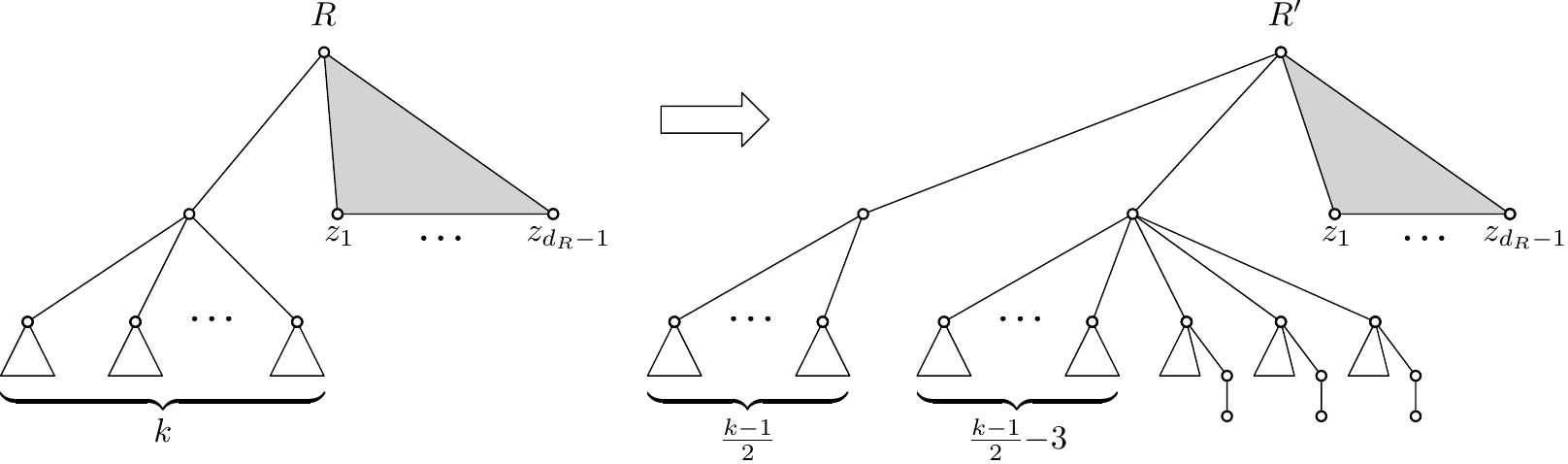}
    \caption{Suggested change when there exists a $C_k$ branch with odd $k\geq 143$.}
    \label{fig:C_k}
\end{figure}

\begin{eqnarray*}
\Delta(T,T') &=& f(d_R,k+1 )+kf(k+1,4 )+\sum_{i=1}^{d_R-1} f(d_R,d_{z_i}) - 2f(d_R+1,\tfrac{k+1}{2}) \\
 && -(k-4)f(\tfrac{k+1}{2},4)-3 f(\tfrac{k+1}{2},5) -\sum_{i=1}^{d_R-1} f(d_R+1,d_{z_i}).
\end{eqnarray*}

Using Proposition \ref{prop:2} we can see that  $f(d_R,d_{z_i}) - f(d_R+1,d_{z_i})$ is increasing in $d_{z_i}$. Thus, to have the worst case we may consider the lowest possible values for the degrees $d_{z_i}$.
Note that there are $B_3$-branches in $C_k$ and since $k<d_R$, Lemma \ref{lem:similar} shows that $d_{z_i}\geq 4$. So:
\begin{eqnarray*}
 \Delta(T,T') &\geq& f(d_R,k+1 )+kf(k+1,4 )+(d_R-1) f(d_R,4) - 2f(d_R+1,\tfrac{k+1}{2}) \\
 && -(k-4)f(\tfrac{k+1}{2},4) -3 f(\tfrac{k+1}{2},5)-(d_R-1) f(d_R+1,4).
\end{eqnarray*}

Now, let us rewrite this inequality as follows:
\begin{eqnarray*}
 \Delta(T,T') &\geq& f(d_R,k+1)-f(d_R+1,\tfrac{k+1}{2}) + \\
  && (d_R-1)(f(d_R,4)-f(d_R+1,4)) + \\
 && k(f(k+1,4)-f(\tfrac{k+1}{2},4)) + \\
 && 3(f(\tfrac{k+1}{2},4)-f(\tfrac{k+1}{2},5)) + \\
 && f(\tfrac{k+1}{2},4)-f(\tfrac{k+1}{2},d_R+1).
\end{eqnarray*}

Again, using Proposition \ref{prop:2} we can see that the value in each line except the first one is increasing in $k$. Regarding the first line, observe the following:
\begin{eqnarray*}
f(d_R,k+1)-f(d_R+1,\tfrac{k+1}{2}) & = & f(d_R,k+1)-f(d_R+1,k+1)\\
 && + \ f(d_R+1,k+1)-f(d_R+1,k)\\
 && + \ f(d_R+1,k)-f(d_R+1,k-1)\\
 && + \ \ldots\\
 && + \ f(d_R+1,\tfrac{k+3}{2})-f(d_R+1,\tfrac{k+1}{2}).
\end{eqnarray*}
Here, each line is increasing in $k$, therefore $f(d_R,k+1)-f(d_R+1,\frac{k+1}{2})$ is also increasing in $k$. If we substitute $k$ by 143, then the above lower bound only depends on one variable, $d_R$, and it is easy to check that $\Delta(T,T')>0$ for any value of $d_R\geq k$. Therefore, $\Delta(T,T')>0$ for any $d_R\geq k\geq 143$.
\end{proof}

Note that the above proof has some room for improvement, since we have considered exclusive extreme configurations, in one assuming that $d_{z_i}=4$ and also considered only one copy of a $C_k$ branch.

\begin{figure}[htb]
    \centering
    \includegraphics[width=0.7\textwidth]{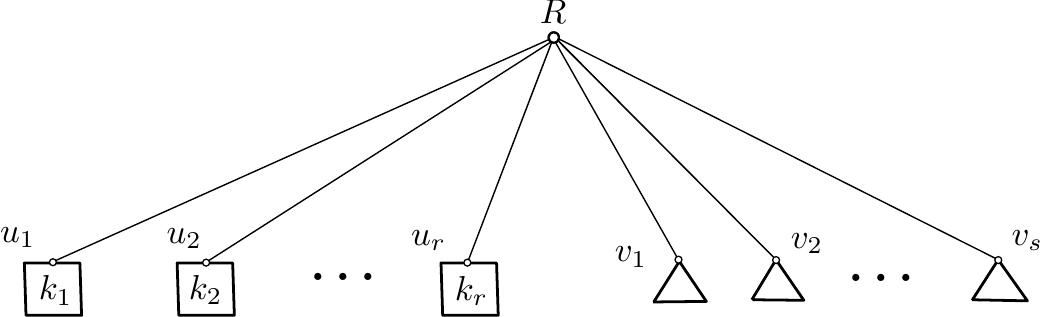}
    \caption{The basic structure (after deletion of a small number of vertices). When $n$ is small, we have only $B_3$-branches ($r=0$). As $n$ grows, a combination of both occurs and when $n$ is sufficiently large, only $C_{k_i}$-branches remain ($s=0$), eventually with all $k_i$ being equal to 52 and $r=n/365 - O(1)$.}
    \label{fig:main_struct}
\end{figure}

In this paper it will be proved that ABC-minimal trees have the structure close to that shown in Figure \ref{fig:main_struct} in the sense that there is a small number of vertices whose deletion gives us this form. Moreover, the following transition occurs. Let us denote by $r$ the number of $C_k$-branches (whose roots $u_1,\dots,u_r$ are adjacent to the root $R$) and by $s$ the number of $B_3$-branches, whose roots $v_1,\dots,v_s$ are adjacent to $R$.  When $n$ is relatively small, we have no $C_{k_i}$-branches ($r=0$). In the intermediate range between around a 1000 and several thousands, we have a combination of both extremes, depending on the remainder of $n$ divided by 365. When $n$ is sufficiently large, it turns out that $B_3$-branches disappear ($s=0$) and all values $k_i$ stabilize at $52$, with a few exceptions (for which $k_i=51$ or 53; see Lemma \ref{lemma:C}).

\section{Degrees strictly decrease away from the root}
\label{sect:3}

Let $T$ be an ABC-minimal tree of order $n$ and let $\Delta$ be the maximum degree of $T$. Theorem \ref{thm:1.3} implies that for every path starting at the root, the vertex-degrees along the path never increase. The goal of this section is to prove that the degrees are strictly decreasing, with two sporadic exceptions.

\begin{theorem}\label{thm:2.1}
Let\/ $T$ be an ABC-minimal tree of order greater than 9 and maximum degree $\Delta$. For every $k\ge2$, $T$ contains at most one edge, whose end vertices both have degree $k$. Moreover, if such an edge exists, then $k$ is either $2$ or $\Delta$.
\end{theorem}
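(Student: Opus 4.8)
The plan is to prove both claims—uniqueness of any $k$-$k$ edge and the restriction $k\in\{2,\Delta\}$—by means of a local exchange argument based on Proposition~\ref{prop:2} and Lemma~\ref{lem:similar}. Throughout I would work with an ABC-extremal representative of the similarity class, so that properties (P1)--(P3) are available. The underlying principle is that a $k$-$k$ edge represents a ``flat spot'' in the otherwise strictly decreasing degree sequence along root-paths (Theorem~\ref{thm:1.3}), and such flat spots are costly unless they sit at one of the two extreme degree values.

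First I would establish uniqueness. Suppose $T$ had two distinct edges $u_1v_1$ and $u_2v_2$ with $d_{u_i}=d_{v_i}=k$, where (by the rooted structure) $v_i$ is a son of $u_i$. The idea is to perform an exchange that breaks the symmetry: since all four endpoints have degree $k$, we are on the boundary case of Lemma~\ref{lem:similar}(b), so any swap of the subtrees $T_{v_1}$ and $T_{v_2}$ is a similarity exchange preserving the ABC-index. Using (P1)/(P3) to compare the subtrees hanging below $v_1$ and $v_2$, at least one such exchange must be $\succ$-increasing unless the two configurations are already identical in a way that is incompatible with them being two \emph{distinct} edges at different heights. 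The precise bookkeeping here—checking that one of the two candidate exchanges strictly increases the $\succ$-order—mirrors the final paragraph of the proof of Lemma~\ref{lem:B}, and I would model the argument on that. Alternatively, if both $k$-$k$ edges coexist at comparable positions, a single genuine (non-similarity) exchange driven by the strict monotonicity in Proposition~\ref{prop:2} lowers the ABC-index, contradicting minimality.

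Next I would pin down the value of $k$ for a surviving $k$-$k$ edge $uv$ (with $v$ a son of $u$), ruling out $3\le k\le\Delta-1$. Here the key move is that if $2<k<\Delta$, then strict monotonicity gives us slack in \emph{both} directions: the father $\hat u$ of $u$ has $d_{\hat u}\ge k$ (degrees do not increase toward the root), and below $v$ there is a son of degree at most $k$. I would compare $T$ with the tree $T'$ obtained by moving one unit of degree—detach a minimal subtree from one side of the edge and reattach it so as to create an imbalance, turning the pair $(k,k)$ into $(k+1,k-1)$ or shifting a pendant as in the proofs of Lemma~\ref{B_k_equal} and Lemma~\ref{lem:C_k}. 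Writing $\Delta(T,T')$ as a sum of $f$-differences and applying Proposition~\ref{prop:2} to each, the convexity-type inequality $f(k+1,\cdot)+f(k-1,\cdot)$ versus $2f(k,\cdot)$ yields $\Delta(T,T')>0$ whenever $k$ is strictly interior, contradicting minimality. The endpoints $k=2$ and $k=\Delta$ survive precisely because one of the two required degree adjustments is blocked: at $k=\Delta$ we cannot increase a degree beyond the maximum, and at $k=2$ the function $f(2,x)=\tfrac{\sqrt2}{2}$ is constant (degenerate), so the exchange produces no gain.

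The main obstacle I anticipate is the uniqueness step when the two $k$-$k$ edges could in principle be swapped only through similarity exchanges that leave the ABC-index unchanged: one must argue at the level of the $\succ$-ordering rather than the ABC-index, and carefully verify that two distinct such edges cannot coexist in a tree that is simultaneously $\succ$-maximal in its similarity class. This requires a clean case analysis on the relative heights of $u_1,u_2$ and on the structure of their subtrees via (P3), and it is where the hypothesis ``order greater than $9$'' should enter—small trees must be checked directly to exclude degenerate coincidences. The monotonicity computation in the second step, by contrast, I expect to be routine once the $T'$ is chosen correctly, modeled on the already-proved lemmas.
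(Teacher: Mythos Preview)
Your core modification---detach a son of $v$ and reattach it to $u$, turning the $(k,k)$ pair into $(k+1,k-1)$---is exactly what the paper does. The genuine gap is in your estimate of the difficulty of the resulting computation. The claim that a ``convexity-type inequality $f(k+1,\cdot)+f(k-1,\cdot)$ versus $2f(k,\cdot)$'' yields $\Delta(T,T')>0$ for all interior $k$ is too optimistic: after bounding via Proposition~\ref{prop:2}, the paper's worst-case lower bound (their inequality~(\ref{eqn:1})) is \emph{not} positive for small $k$. The cases $k=3$ and $k=4$ (with $m=3$) each require entirely different tree replacements (Figures~\ref{fig:k4}, \ref{fig:R33}, \ref{fig:R3}, \ref{fig:R4}) and a case analysis on the actual degrees of the grandchildren, not just the worst case. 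For larger $k$ the paper uses computer verification up to $k=10^5$ and a Taylor expansion beyond that. Nothing in your sketch addresses the small-$k$ failure of the basic inequality; this is where the real work lies.

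Your explanation of why $k=\Delta$ survives is also incorrect, and this matters because it is tied to how uniqueness is actually proved. The modification \emph{does} increase a degree beyond $\Delta$; nothing forbids that. What fails at $k=\Delta$ is the availability of a neighbour $R$ of $u$ with $d_R\ge k$ to feed into the estimate: if $u$ is the root and $uv$ is the only $\Delta$-$\Delta$ edge, there is no such $R$ and the edge is permitted. If a \emph{third} vertex $w$ of degree $\Delta$ exists, it must be adjacent to $u$ (Theorem~\ref{thm:1.3}), and $w$ then plays the role of $R$, so the same modification gives a contradiction. This is how the paper handles uniqueness at $k=\Delta$---as a byproduct of the main argument, not via a separate $\succ$-ordering step. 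Your proposed uniqueness argument through similarity exchanges is both unnecessary and, as stated, incomplete: two identical $k$-$k$ configurations at different heights are not obviously incompatible with $\succ$-maximality, and you give no mechanism to rule them out.
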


\begin{proof}
ABC-minimal trees of order $\leq 1100$ are known (see \cite{comp1100}) and they satisfy Theorem \ref{thm:2.1}.  It is also known that for trees of order $\geq 415$ there are no 2-2 edges in ABC-minimal trees (see Lemma \ref{lem:2-2edge}). Since Theorem \ref{thm:2.1} holds for trees of smaller order we may assume that $k>2$ and that there exist an edge $uv$ whose end vertices have the same degree, $d_u=d_v=k$. Suppose that $u$ is closer to the root than $v$. If there is more than one such edge, consider the one with the highest value of $k$ and if there is more than one such $k$-$k$ edge, consider one which is farthest from the root. Then all descendants of $v$ have degree smaller than $k$. Detach the child $x$ of $v$ with the largest degree (together with its subtree $T_x$) and connect it to $u$ as shown in Figure \ref{fig:no_kk}.
(The subtree $T_x$ is not shown in the figure.)
Let $T'$ be the resulting tree. Note that by selection of the edge $uv$, we know that $d_x < k$. 
We have two cases. If $u$ is the root, then if $uv$ is the only edge whose end vertices have degree $k$, then Theorem \ref{thm:2.1} holds for $k=\Delta$. If there is another vertex $w$ with $d_w=k$, then it should be adjacent to $u$ and we can consider $w$ to play the role of $R$ in Figure \ref{fig:no_kk}, so we have $d_R\geq d_u$. If $u$ is not the root, then let $R$ be the parent of $u$ and therefore $d_R\geq d_u$.

\begin{figure}[htb]
    \centering
    \includegraphics[width=0.7\textwidth]{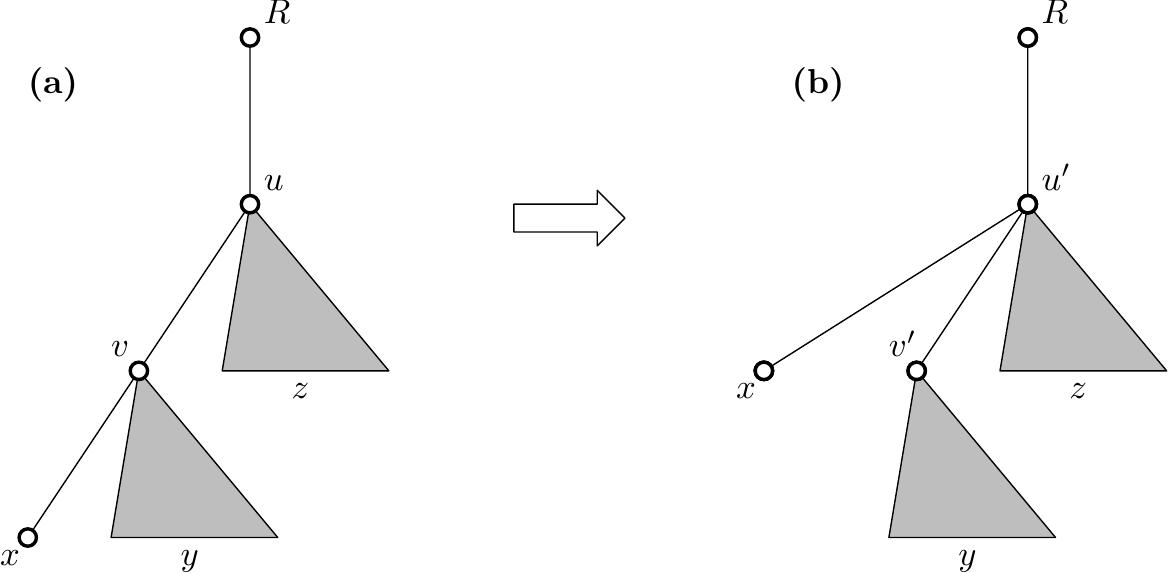}
    \caption{Changing the tree when $2<d_u=d_v\le\Delta$.}
    \label{fig:no_kk}
\end{figure}
Let $y_1,\dots,y_{k-2}$ be the children of $v$ different from $x$ and let $z_1,\dots,z_{k-2}$ be the children of $u$ different from $v$.
We have selected $x$ so that $d_x\geq d_{y_j}$. Note that if there exist $i$ such that $d_x\geq d_{z_i}$ then we can exchange the branch rooted at $z_i$ with the branch rooted at $x$ without changing the ABC index of the tree. So without loss of generality we can assume that $d_{z_i}\geq d_x\geq d_{y_j}$ ($1\le i,j\le k-2$).
Figure \ref{fig:no_kk} only shows edges whose degrees have changed. Let $\alpha(T)$ and $\alpha(T')$ be the contribution of all these adges to $\abc(T)$ and $\abc(T')$, respectively. Clearly,
\begin{eqnarray*}
 \alpha(T) &=& f(d_R,d_u )+f(d_u,d_v )+\sum_{i=1}^{k-2} f(d_u,d_{z_i})+\sum_{j=1}^{k-2} f(d_v,d_{y_j})+f(d_v,d_x) \\
 &=& f(d_R,k)+f(k,k)+\sum_{i=1}^{k-2} f(k,d_{z_i}) +\sum_{j=1}^{k-2} f(k,d_{y_j}) +f(k,d_x).
\end{eqnarray*}
Similarly,
\begin{eqnarray*}
\alpha(T') &=&
f(d_R,k+1)+f(k+1,k-1)+\\
&&\sum_{i=1}^{k-2} f(k+1,d_{z_i})+\sum_{j=1}^{k-2} f(k-1,d_{y_j})+f(k+1,d_x).
\end{eqnarray*}

Note that there exists $z\in \{z_1,\dots,z_{k-2}\}$ such that $f(k,d_{z_i}) - f(k+1,d_{z_i}) \geq f(k,d_z) - f(k+1,d_z)$ for all $i=1, \ldots, k-2$. Considering a similar inequality for $y\in \{y_1,\dots,y_{k-2}\}$, we have:
\begin{eqnarray*}
  \alpha(T) - \alpha(T') &\geq& f(d_R,k)-f(d_R,k+1)+f(k,k)-f(k+1,k-1)+\\
  && f(k,d_x)-f(k+1,d_x)+ (k-2)f(k,d_z)-(k-2)f(k+1,d_z)+\\
  && (k-2)f(k,d_y)-(k-2)f(k-1,d_y).
\end{eqnarray*}

We have discussed that $d_R\geq k\geq d_z\geq d_x\geq d_y$ and we would like to show that $\alpha(T) - \alpha(T') > 0$, i.e., this change improves the ABC-index. By Proposition \ref{prop:2},
$f(d_R,k)-f(d_R,k+1)$ is increasing in $d_R$. Since $d_R\geq k$, this implies that
$$f(d_R,k)-f(d_R,k+1)\geq f(k,k)-f(k,k+1).$$

Similarly, we have:
$f(k,d_x)-f(k+1,d_x)$ is increasing in $d_x$,
$f(k,d_z)-f(k+1,d_z)$ is increasing in $d_z$,
and $f(k,d_y)-f(k-1,d_y)$ is decreasing in $d_y$.
Therefore we may replace $d_z$ and $d_y$ by $d_x =:m < k$, so we have:
\begin{eqnarray}\label{eqn:1}
  \alpha(T) - \alpha(T') &\geq& f(k,k)-f(k,k+1)+f(k,k)-f(k+1,k-1)+  \nonumber \\
&& f(k,m)-f(k+1,m) + (k-2)f(k,m)-(k-2)f(k+1,m)+  \nonumber \\
&& (k-2)f(k,m)-(k-2)f(k-1,m). \label{eq:s1}
\end{eqnarray}

If $m=1$, then it follows by Theorem \ref{thm:1.1} that $k=2$ and we have settled this case before. So we may assume that $m\geq2$.
Therefore $1<m<k$.
Using computer, we have calculated the values of the right-hand side of (\ref{eq:s1}) for all pairs $(m,k)$, where $1<m<k\le 10^5$, and
$k\geq 5$. The same was checked for $k=4$ when $m=2$. In all cases the computation confirms that $\Delta(T,T')>0$.

\begin{figure}[!htb]
    \centering
    \includegraphics[width=0.5\textwidth]{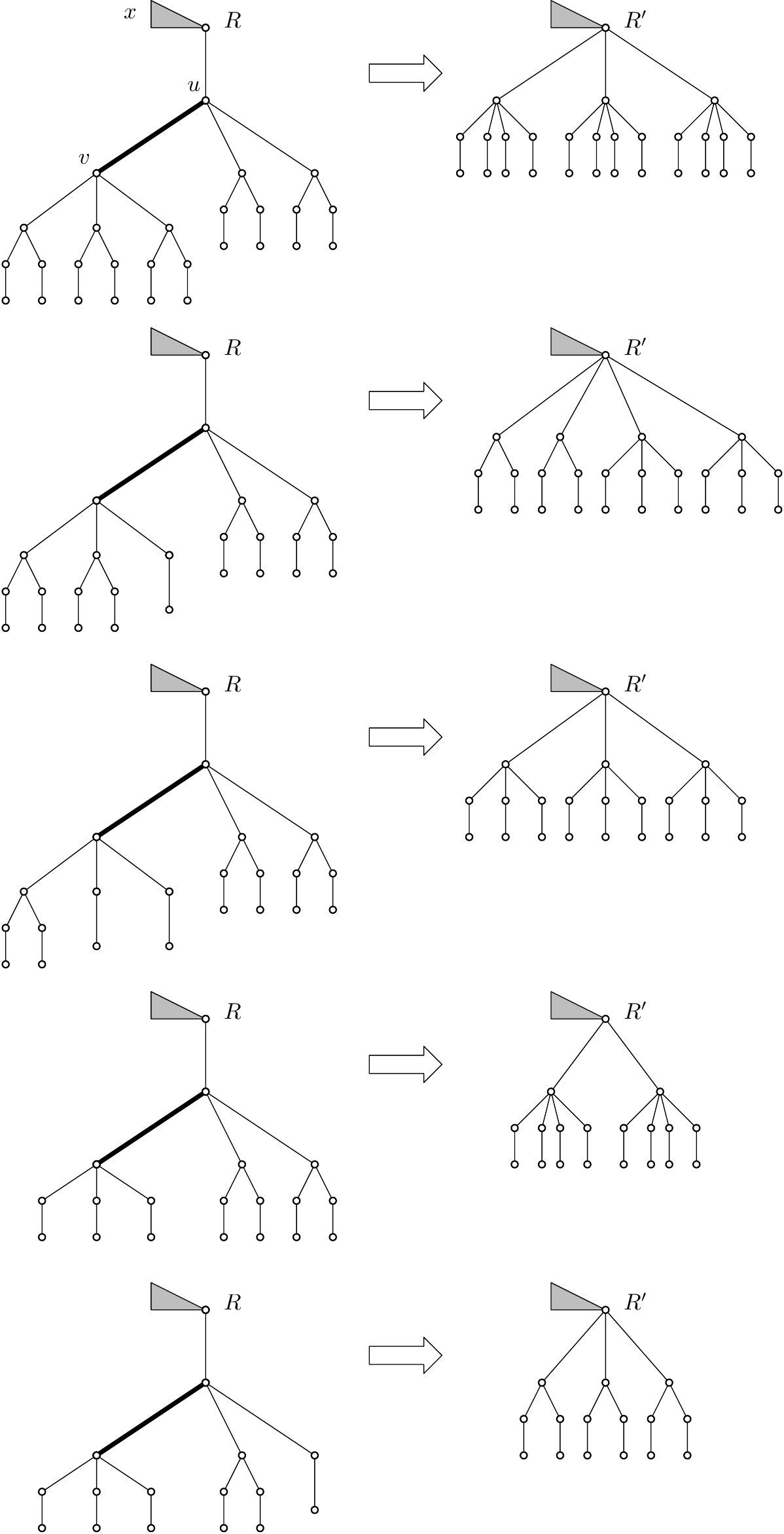}
    \caption{Changing the tree when $k=4$ and $m\neq 2$.}
    \label{fig:k4}
\end{figure}

Suppose now that $k>10^5$.
Let $m=ck$ where $\frac{1}{k}<c<1$. Using Taylor series we can expand the right-hand side of (\ref{eq:s1}) in terms of $k$ (factor out $1/\sqrt{8k^3}$ and then use Taylor series of order 5 to expand).\footnote{The expansion was produced with the help of the software platform Maple, version 18.} We also made the substitution $A=\sqrt{\frac{c}{c+1}}$ and have obtained the following:
\begin{eqnarray}
  \alpha(T) - \alpha(T') &\geq&
      \frac{1}{\sqrt{8k^3}}
      \left( 1 - \sqrt{2}\, A + \frac{\sqrt{2}}{2}\, A^3\right) +
  \nonumber \\
  &&
  \frac{3}{2\sqrt{k^5}} \left(\frac{2}{3} \frac{A}{c} - \frac{35}{24} + A - \frac{1}{4} A^3 - \frac{A}{c+1} + \frac{3}{2}\,\frac{A^3}{c+1}\right) + O(k^{-7/2}).
  \label{eq:s2}
\end{eqnarray}
Note that $A$ is bounded, $0<A<1$.

First, we need to show that the coefficient of $k^{-\frac{3}{2}}$ is positive. When $\frac{1}{k}<c<1$ we have $\frac{1}{k+1}<\frac{c}{c+1}<\frac{1}{2}$. This implies that $1-\sqrt{2}A+\frac{\sqrt{2}}{2}A^3 > 1 - \sqrt{2}/2 > 0$. By rewriting (\ref{eq:s2}), we obtain:
\begin{equation}
  k^{3/2}(\alpha(T) - \alpha(T')) >
      \frac{\sqrt{8} - 2}{8} + \frac{A}{m} + O(\tfrac{1}{k}).
  \label{eq:s3}
\end{equation}
The order of the middle term in (\ref{eq:s3}) is $O(k^{-1/2})$ and is positive.
Note that $c+1$ and $\frac{c}{c+1}$ are bounded and the only case that can cause problem is when $c$ is in the denominator and $c\sim \frac{1}{k}$, which means $m=O(1)$. For $k\ge 10^5$, the terms are negligible in comparison with the constant value of the first term. This shows that $\alpha(T) - \alpha(T') > 0$ for $k>10^5$ and any value of $m<k$.

There are some remaining cases for small values of $k$ ($k=4$ and $k=3$).  When $k=4$ and $m=2$ it is easy to check that (\ref{eqn:1}) is positive. So we may assume that $k=4$ and $m\neq2$, or $k=3$. For these cases note that since equation (\ref{eqn:1}) is not positive, we cannot only consider the worst case and need to discuss all possible values of $d_{z_i}, d_x$ and $d_{y_i}$. Figure \ref{fig:k4} deals with the case when $k=4$. One can check that using the suggested change of the tree, the ABC-index becomes smaller.  Note that since $u$ has a neighbor of degree 3 ($m\neq 2$), all neighbors of $R$ have degree $\geq 3$, otherwise we can exchange them and get a tree with smaller ABC-index ($d_R\geq d_u$).

As a case in point we will discuss the first case shown in Figure \ref{fig:k4} and leave the rest to the reader. As before, we let $\alpha(T)$ be the contribution to $\abc(T)$ of all edges that are shown in the figure. We have:
\begin{eqnarray*}
 \alpha(T) &=& f(d_R,4)+5f(4,3)+f(4,4)+10f(3,2)+10f(2,1)\quad \hbox{and} \\
 \alpha(T') &=& 3f(d_R+2,5)+12f(5,2)+12f(2,1).
\end{eqnarray*}
The change in ABC-index when passing from $T$ to $T'$ is equal to $\alpha(T)-\alpha(T')$ plus all differences $f(d_R,d_x)-f(d_R+2,d_x)$ for each neighbor $x$ of $R$ different from $u$. By Proposition \ref{prop:2}, and since $d_x \ge 3$, we have
$$f(d_R,d_x)-f(d_R+2,d_x)\geq f(d_R,3)-f(d_R+2,3).$$
Consequently,
\begin{eqnarray}
\Delta(T,T')\geq \alpha(T)-\alpha(T')+(d_R-1)\left(f(d_R,3)-f(d_R+2,3)\right). \label{example}
\end{eqnarray}

Using (\ref{example}), it is easy to check that $\Delta(T,T')>0$ for $d_R\geq 4$.


The second case which needs specific treatment is when $k=3$. In this case $d_{y}=d_x=2$ and $d_{z}=2$ or $3$. To solve this case we will first show that a vertex of degree 3 cannot have two descendants of degree 3. For a contradiction assume $u$ has two degree-3 descendants.
Since the edge $uv$ is taken farthest from the root, the descendants of these degree-3 vertices have degree 2, and all further descendants have degree 2 or 1. Having additional descendants of degree 2 does not affect the computation in the sequel, thus we may assume that the situation is as shown in Figure \ref{fig:R33}.
Note that if $u$ has only one descendent of degree three (and one descendent of degree two) and there is another 3-3 edge in the graph, then we have two cases. If $d_R=3$, then another descendent of $R$ is either of degree two or three and in both of these cases by an exchange we will get the structure shown in Figure \ref{fig:R33}. If there is another disjoint 3-3 edge in the graph, then one of those degree three vertices will be the root of a $B_2$ that can be exchanged with $z$ (without changing the ABC-index), and we can then make the change shown in Figure \ref{fig:R33}. Also note that $d_R\geq3$; otherwise we would get a small tree and small trees are known to satisfy our theorem \cite{comp1100}. 
Now it is easy to check that the suggested structure will have smaller ABC-index, thus yielding a contradiction.

\begin{figure}[htb!]
    \centering
    \includegraphics[width=0.45\textwidth]{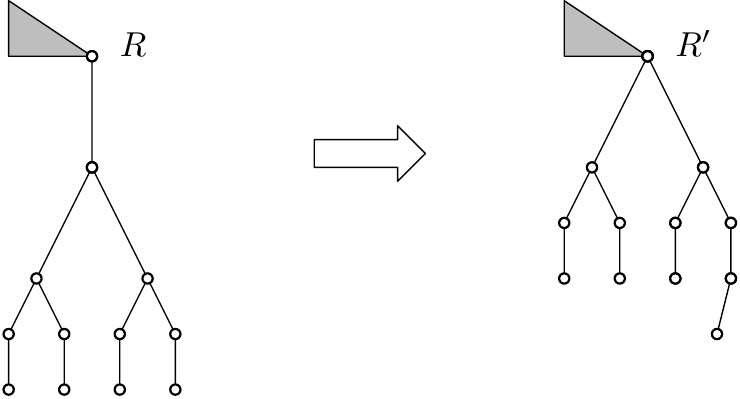}
    \caption{Changing the tree when two sons of $u$ have degree 3.}
    \label{fig:R33}
\end{figure}

\begin{figure}[htb!]
    \centering
    \includegraphics[width=0.42\textwidth]{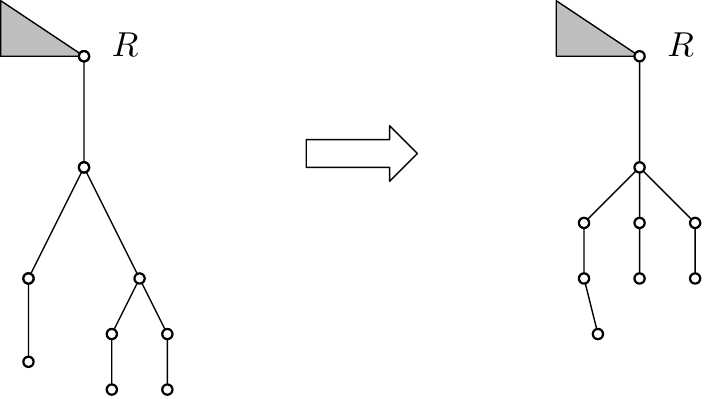}
    \caption{Changing the tree when $d_R\geq 5$.}
    \label{fig:R3}
\end{figure}

\begin{figure}[htb!]
    \centering
    \includegraphics[width=0.5\textwidth]{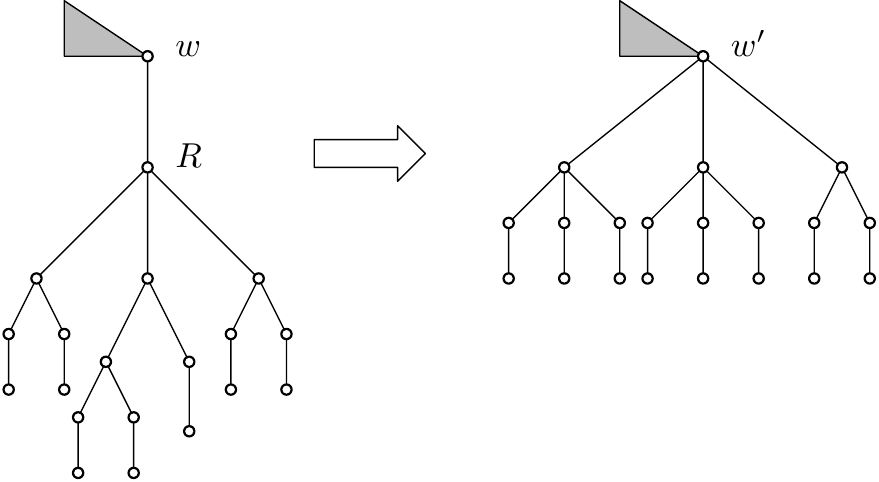}
    \caption{Changing the tree when $d_R=4$.}
    \label{fig:R4}
\end{figure}

The only remaining case is when the only degree-3 neighbor of $u$ is $v$ and there is no other 3-3 edge in the graph. Then the second descendant of $u$ has degree two (by Theorem \ref{thm:1.1}) and we have the situation that is depicted in Figure \ref{fig:R3}. The suggested change improves the ABC-index when $d_R\geq5$. We showed above that we cannot have two 3-3 edges, therefore $d_R\neq 3$ and we may assume that $d_R=4$. If $R$ is the root, then again we have a small tree and, as mentioned before, small trees are known to satisfy our theorem (see \cite{comp1100}). We may assume that there is a vertex $w$ with degree $>4$ (if $d_w=4$, then we should have selected the edge $wR$ for our process). Since $d_R>d_u$, descendants of $R$ should have degree at least 3, otherwise we could have changed them with the $B_2$ branch and get a smaller ABC-index. Figure \ref{fig:R4} presents this case and the suggested structure improves the ABC-index for $d_w\geq 5$. This completes the proof.
\end{proof}

Combining Theorems \ref{thm:1.3} and \ref{thm:2.1}, we get the following corollaries.

\begin{corollary}
In any ABC-minimal tree with maximum vertex degree $\Delta$, there are at most two vertices whose degrees are equal to $\Delta$, and if there are two, they are adjacent.
\end{corollary}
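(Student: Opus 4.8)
The plan is to study the set $S=\{v\in V(T): d_v=\Delta\}$ of all vertices of maximum degree and to show that it induces a connected subtree of $T$ having at most one edge. Since the root $R$ is chosen among the vertices of degree $\Delta$, we have $R\in S$, so $S\neq\emptyset$, and the whole corollary reduces to a statement about the size of $S$.

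First I would show that the subgraph induced by $S$ is a connected subtree containing $R$. Take any $v\in S$ and consider the path $R=w_0,w_1,\dots,w_m=v$ from the root to $v$. By Theorem~\ref{thm:1.3} the vertex-degrees never increase along a path starting at the root, so $\Delta=d_{w_0}\ge d_{w_1}\ge\cdots\ge d_{w_m}=\Delta$, which forces $d_{w_i}=\Delta$ for every $i$. Hence the entire path lies in $S$; in particular every vertex of $S$ is joined to $R$ through vertices of $S$, so the subgraph induced by $S$ is a subtree of $T$ rooted at $R$.

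Next I would bound the number of its edges. Every edge of this induced subtree joins two vertices of degree $\Delta$, i.e.\ it is a $\Delta$-$\Delta$ edge. For order $>9$ the tree cannot be a path (a path of such order contains many $2$-$2$ edges, contradicting Theorem~\ref{thm:2.1} or Lemma~\ref{lem:2-2edge}), so $\Delta\ge3$ and the relevant value $k=\Delta$ satisfies $k>2$, consistent with the ``$k$ is either $2$ or $\Delta$'' clause of Theorem~\ref{thm:2.1}. Applying the first assertion of Theorem~\ref{thm:2.1} with $k=\Delta$, the tree contains at most one $\Delta$-$\Delta$ edge, so the induced subtree has at most one edge. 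A connected tree with at most one edge has at most two vertices, whence $|S|\le2$; and if $|S|=2$, the two vertices are the endpoints of this unique edge and are therefore adjacent.

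The argument is short because the two cited theorems do the heavy lifting: Theorem~\ref{thm:1.3} confines all maximum-degree vertices to a single subtree hanging from the root, and Theorem~\ref{thm:2.1} caps the number of $\Delta$-$\Delta$ edges inside it at one. The only point requiring a little care is ruling out the degenerate possibility $\Delta=2$ before invoking Theorem~\ref{thm:2.1}, and I expect this to be the one genuine (if minor) obstacle; it is settled immediately by observing that an ABC-minimal tree of order $>9$ is not a path.
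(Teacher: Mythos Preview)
Your proof is correct and is exactly the argument the paper has in mind: the paper gives no proof beyond the sentence ``Combining Theorems~\ref{thm:1.3} and~\ref{thm:2.1}, we get the following corollaries,'' and you have filled in precisely those details---Theorem~\ref{thm:1.3} forces the maximum-degree vertices to induce a connected subtree through $R$, and Theorem~\ref{thm:2.1} limits that subtree to a single edge. One small remark: the detour showing $\Delta\ge 3$ is not actually needed, since the first assertion of Theorem~\ref{thm:2.1} (at most one $k$-$k$ edge) already applies to $k=\Delta$ even when $\Delta=2$, so steps~1 and~2 alone suffice; your extra paragraph is harmless but can be dropped.
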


\begin{corollary}
\label{cor:degrees decrease}
In any ABC-minimal tree with maximum vertex degree $\Delta$, the degree sequence on any path starting from a vertex of maximum degree is strictly decreasing with the following exceptions:
\begin{itemize}
\item When two consecutive vertices on the path have degree 2 and the tree is less than 415 vertices.
\item When the path starts with two vertices whose degrees are equal to $\Delta$.
\end{itemize}
\end{corollary}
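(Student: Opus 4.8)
The plan is to read the corollary off directly from the two structural facts already in hand: that degrees never increase along a path emanating from a maximum-degree vertex (the consequence of Theorem~\ref{thm:1.3} recorded at the start of this section), and that an equality across an edge is tightly constrained by Theorem~\ref{thm:2.1}. So the entire argument reduces to a short case analysis of what an equality on such a path can be.

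First I would fix a path $P = w_0 w_1 \cdots w_\ell$ starting at a vertex $w_0$ of maximum degree and root $T$ at $w_0$; since $w_0$ has degree $\Delta$ this is an admissible (similar) choice of root, so $P$ becomes a path emanating from the root. By Theorem~\ref{thm:1.3} the degree sequence along $P$ is non-increasing, $d_{w_0}\ge d_{w_1}\ge\cdots\ge d_{w_\ell}$. It then remains only to locate the places where two consecutive degrees coincide, that is, where $P$ contains an edge $w_{i-1}w_i$ with $d_{w_{i-1}}=d_{w_i}=k$ for some $k\ge 2$; everywhere else the decrease is automatically strict.

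Such an edge is a $k$-$k$ edge, so Theorem~\ref{thm:2.1} forces $k\in\{2,\Delta\}$ and permits at most one such edge for each of these two values. I would then split into the two cases. If $k=2$, the edge is a $2$-$2$ edge, and Lemma~\ref{lem:2-2edge} says such an edge exists only when $n<415$; this yields the first listed exception. If $k=\Delta$, then $w_{i-1}$ and $w_i$ are two adjacent vertices of maximum degree, so by the previous corollary they are the only two vertices of degree $\Delta$ in $T$. Since $w_0$ also has degree $\Delta$ and the sequence is non-increasing, each of $w_0,\dots,w_{i-1}$ has degree $\Delta$; as there are at most two such vertices this forces $i=1$, so the $\Delta$-$\Delta$ edge can only be the very first edge $w_0w_1$ of $P$. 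This is exactly the second listed exception and shows that the path ``starts with two vertices of degree $\Delta$''.

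I do not expect a real obstacle, since all the substance is already hidden in Theorems~\ref{thm:1.3} and~\ref{thm:2.1}; the one point that needs care is the bookkeeping for $k=\Delta$, namely the claim that a maximum-degree coincidence cannot occur anywhere except at the start of the path. This follows cleanly by combining the non-increasing property with the fact (previous corollary) that there are at most two vertices of degree $\Delta$, but it is the only step that is not completely immediate. Finally, the uniqueness clauses ``at most one $2$-$2$ edge'' and ``at most one $\Delta$-$\Delta$ edge'' supplied by Theorem~\ref{thm:2.1} confirm that these coincidences are genuinely sporadic, completing the argument.
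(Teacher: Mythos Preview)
Your proof is correct and follows essentially the same approach as the paper, which simply states that the corollary is obtained by combining Theorems~\ref{thm:1.3} and~\ref{thm:2.1}. Your write-up makes explicit the small bookkeeping step (that a $\Delta$-$\Delta$ coincidence must occur on the first edge, using the previous corollary on the number of maximum-degree vertices) which the paper leaves implicit.
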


Another consequence of these results is that the property (P1) holds also when $v$ is a successor of $u$.

\begin{itemize}
  \item[\rm (P1')] Let $T$ be an ABC-extremal tree. Suppose that $u,v\in V(T)$ and $h(u)<h(v)$. Then $T_{u}\succeq T_{v}$ and, in particular, $d_{u}\ge d_{v}$.
\end{itemize}

\begin{proof}
If $v$ is not a successor of $u$ or when $u$ is the root, the property is just (P1). Therefore we may assume that $u$ is not the root and $T_v\subset T_u$. By Corollary \ref{cor:degrees decrease}, we have that $d_u \ge d_v$. Suppose, for a contradiction, that $T_u \prec T_v$. Then we have $d_u\le d_v$, which implies that $d_u = d_v$. Applying Corollary \ref{cor:degrees decrease} again, we conclude that $v$ must be a son of $u$ and that their degree is either 2 or $\Delta$. Clearly, $d_u=d_v=2$ gives that $T_u \succ T_v$; in the other case, $u$ must be the root, a contradiction.
\end{proof}

\section{Vertices at distance 2 from the root}
\label{sect:4}

In this section we will show that vertices at distance at least 2 from the root have degree at most 5. Combining this with the results in the previous section, we will be able to conclude that the diameter of ABC-minimal trees is bounded. Along the way we will prove several other properties of ABC-extremal trees.

\begin{theorem}\label{main}
In any ABC-extremal tree, every vertex of degree at least 6 is either the root or is adjacent to the root.
\end{theorem}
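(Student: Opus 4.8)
The plan is to argue by contradiction using a local surgery of the kind used throughout this section, combined with the monotonicity of Proposition \ref{prop:2} and, for the range of very large degrees, a Taylor expansion as in the proof of Theorem \ref{thm:2.1}. Suppose some vertex at distance at least $2$ from $R$ has degree $\geq 6$, and among all such vertices choose $v$ to be one that is farthest from the root; set $k = d_v \geq 6$ and let $u = \hat v$ be its father. Because $v$ is at distance $\geq 2$, the edge $uv$ is not incident with $R$, so Corollary \ref{cor:degrees decrease} gives $d_u \geq k+1 \geq 7$ (and the father of $u$ in turn has strictly larger degree unless it is the root). The reason for choosing $v$ farthest is that every descendant of $v$ lies at distance $\geq 2$ from $R$ and is farther than $v$, hence already has degree $\leq 5$; together with Theorem \ref{thm:1.1} this forces every son of $v$ to have degree in $\{2,3,4,5\}$. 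Using (P1'), Theorem \ref{thm:1.3} and the branch Lemmas \ref{B_k branches are small}--\ref{lemma:C}, I would first pin down $T_v$ as a large and highly structured branch: its sons of degree $4$ root $B_3$-branches, its sons of degree $2$ are pendant paths (legs), and so on, so that $T_v$ is essentially a big $B$- or $C$-type branch whose root has been pushed to depth $\geq 2$.

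The second step is the surgery, and the crucial simplification is that $f(x,2)=\sqrt{1/2}$ is independent of $x$: a leg (a degree-$2$ son of $v$ together with its degree-$1$ son) contributes $\sqrt2$ to $\abc$ no matter where it is attached, so relocating legs changes the index only through the degrees of the vertices they leave and enter. This reduces the comparison to a handful of edges: the father edge $f(d_u,k)$, the edges incident with $R$, and a bounded number of edges among the sons of $u$ and of $R$. I would route excess structure from $v$ toward the root, where large degrees are cheap, for example by detaching legs (or whole $B_3$-subbranches) from $v$ and reattaching them at $R$ or at a low-degree son of $R$, and, when necessary, splitting $T_v$ into two balanced branches in the manner of Lemma \ref{lem:C_k} so as to preserve the number of vertices (compensating any surplus by turning some $B_3$-branches into $B_4$-branches). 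Writing $T'$ for the modified tree, I would express $\Delta(T,T')=\abc(T)-\abc(T')$ as a sum of consecutive-difference terms of $f$, replace the unknown sibling degrees $d_{z_i}$ by their extremal values via Proposition \ref{prop:2}, and finally verify $\Delta(T,T')>0$ by a finite computation for small $(d_u,d_R,k)$ and by the factor-out-and-Taylor-expand argument of Theorem \ref{thm:2.1} for large degrees.

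The main obstacle, and the reason the statement is genuinely about distance $\geq 2$ rather than a purely local fact, is the father edge: lowering $d_v$ replaces $f(d_u,k)$ by $f(d_u,k-1)$, which \emph{increases} the index, so a naive ``move one leg to $u$'' or ``move one leg to $R$'' can fail --- indeed a quick evaluation at $d_u=8$, $d_R=9$, $k=6$ with degree-$2$ siblings gives $\Delta(T,T')<0$ for the single-leg move. The resolution must exploit the fact that the existence of a deep high-degree $v$ forces the root to carry enough moderate- or high-degree children: its son $u$ already has degree $\geq 7$, and by (P1) and Lemma \ref{B_k_equal} the siblings of $v$ and the children of $R$ cannot all be degree-$2$ legs, so the many positive terms $f(d_R,d_w)-f(d_R+1,d_w)$ collectively dominate the single negative father-edge contribution. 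Handling the borderline regime of moderate $\Delta$ and $d_u$ precisely, while keeping the surgery order-preserving, is where the real work lies; the large-degree regime is then clean via the asymptotic expansion.
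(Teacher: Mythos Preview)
Your plan diverges from the paper's in a structural way, and the divergence happens exactly at the obstacle you identify but do not resolve.

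The paper does \emph{not} work with a deepest high-degree vertex $v$ and try to push material from $T_v$ outward toward $R$. Instead it runs a two-stage argument. Stage one (Lemma~\ref{lem:distance2fromR}) takes the son $u$ of $R$ with $\succ$-largest subtree and performs an \emph{inward} surgery: contract the edge $Ru$ (so $R$ absorbs all of $u$'s children) and compensate by subdividing one pendant edge elsewhere. Under the hypothesis that \emph{every} $y_i$ and $z_j$ has degree $\ge 6$, the worst case $d_{y_i}=d_{z_j}=6$ already gives $\Delta(T,T')>0$; the remaining analysis shows that if this fails, at most one son of $R$ can have a child of degree $\ge 6$. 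This contraction move sidesteps your ``father-edge'' difficulty entirely: there is no term $f(d_u,k)-f(d_u,k-1)$ because $u$ disappears.

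Stage two (Lemma~\ref{lem:noU-exceptional}) then eliminates that single possible $U$-exceptional branch, but only after building substantial intermediate machinery: bounds on the number of degree-$3$ and degree-$5$ vertices (Lemma~\ref{lem:d5}), the classification of $B$-exceptional branches (Lemma~\ref{no_b_exception} and Corollary~\ref{cor:endingbranches}), and---crucially---global bounds on $C_k$-branches (Lemmas~\ref{lem:k_is_51} and~\ref{lem:7k+8}). These are what allow the paper to say, in the final surgery of Figure~\ref{U_exceptional}, that the relevant sibling degrees are essentially pinned to $53$ when $d_R$ is large, making the estimate close.

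Your outline never reaches this point. You correctly observe that a single-leg move can fail (your $d_u=8$, $d_R=9$, $k=6$ example), and you correctly diagnose that the rescue must come from the many siblings of $u$ at $R$. But ``by (P1) and Lemma~\ref{B_k_equal} the siblings cannot all be degree-$2$ legs'' is far too weak: to make the sum $\sum(f(d_R,d_w)-f(d_R+1,d_w))$ dominate you need quantitative control on those $d_w$, and that control is exactly what the $C_k$-branch lemmas provide. Without them the ``borderline regime of moderate $\Delta$ and $d_u$'' that you flag is not a detail---it is the entire difficulty, and your sketch contains no mechanism to handle it. The paper's route (contract first to isolate one bad branch, then develop enough global structure to kill it) is not incidental; it is how the argument closes.
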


The proof of Theorem \ref{main} consists of two parts. First, we prove a weaker statement (Lemma \ref{lem:distance2fromR} below) which has a possible exception when the result may not hold. In the rest of the section we shall then prove that such an anomaly does not occur (Lemma \ref{lem:noU-exceptional}).

\subsection{Exceptional branches}

To evolve terminology, let us say that a son $u$ of the root is a \emph{$U$-exceptional vertex} if it has a son of degree at least 6; its subtree $T_u$ is said to be a \emph{$U$-exceptional branch}.

\begin{lemma}
\label{lem:distance2fromR}
Let\/ $T$ be an ABC-extremal tree. Then $T$ has at most one $U$-exceptional vertex, and if $u$ is such a vertex, then $u$ has largest degree among the sons of the root and $u$ also has a neighbor of degree at most\/~$5$.
\end{lemma}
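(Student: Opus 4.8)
The plan is to prove Lemma~\ref{lem:distance2fromR} in two movements that mirror the two assertions: first uniqueness of the $U$-exceptional vertex (together with its maximality in degree), and then the existence of a low-degree neighbour. For the first part I would argue by contradiction using the same similarity-exchange machinery that established Lemma~\ref{lem:B}. Suppose $u_1$ and $u_2$ are two distinct $U$-exceptional sons of the root, say with $d_{u_1}\ge d_{u_2}$, and let $w_2$ be a son of $u_2$ with $d_{w_2}\ge 6$. Since $u_1,u_2$ are both sons of $R$, by (P1) applied to the sons of $R$ we have $T_{u_1}\succeq T_{u_2}$ and $d_{u_1}\ge d_{u_2}\ge d_{w_2}\ge 6$. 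The idea is to detach the heavy subtree $T_{w_2}$ from $u_2$ and reattach it under $u_1$: because $u_1$ and $u_2$ have a common father $R$ (so $d_{\hat{u_1}}=d_{\hat{u_2}}=d_R$), a son-to-son exchange among the children of $u_1$ and $u_2$ is forced to be a similarity exchange by Lemma~\ref{lem:similar}(b) whenever the two parent-degrees agree. I would exhibit a concrete exchange $T(\cdot,\cdot)$ that relocates the degree-$\ge 6$ vertex so that both large sons end up under a single vertex of distance~$1$ from the root, and show it is $\succ$-larger when $d_{u_1}>d_{u_2}$, contradicting extremality; in the tie case $d_{u_1}=d_{u_2}$ I would invoke the symmetric exchange exactly as in the proof of Lemma~\ref{lem:B}. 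The same $\succ$-comparison, run against the son of $R$ of maximum degree, shows that a $U$-exceptional $u$ must itself have the largest degree among the sons of $R$.

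For the second assertion—that a $U$-exceptional vertex $u$ has a neighbour of degree at most~$5$—I would argue that otherwise every son of $u$ has degree $\ge 6$, and derive a contradiction by a local surgery that strictly decreases the ABC-index, exactly the style used in Lemma~\ref{B_k branches are small} and Lemma~\ref{lem:C_k}. Concretely, if all of $u$'s neighbours (its father $R$ and all its sons) have degree $\ge 6$, then $u$ sits in a dense ``core'' and one can peel off one of its high-degree sons and redistribute, as follows. By Corollary~\ref{cor:degrees decrease} the degrees strictly decrease away from $R$, so all sons of $u$ have degree $<d_u\le d_R$, and they are themselves vertices of degree $\ge 6$ at distance~$2$ from $R$. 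I would choose the son $w$ of largest degree, split $T_w$ off, and reattach its subtrees in a balanced way lower down (turning a single heavy branch into several lighter branches, analogous to replacing $C_k$ by two $C_{(k-1)/2}$'s), writing the difference $\Delta(T,T')=\abc(T)-\abc(T')$ as a sum of terms each of which is shown to be positive via Proposition~\ref{prop:2}. The monotonicity of $g(x,y)=f(x+a,y-b)-f(x,y)$ is the lever: increasing the degree of the new lower attachment point while decreasing the degree at $u$ pushes every comparison in the favourable direction.

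The main obstacle I anticipate is the second part, specifically controlling the case analysis when $d_u$, $d_R$, and the son-degrees are all simultaneously large but close to one another. Unlike the $C_k$ argument, where one variable $k\to\infty$ lets Taylor-expansion dominate, here there are at least three free degree parameters (the degree of $R$, of $u$, and of the heaviest son $w$), all constrained only by $d_R\ge d_u> d_w\ge 6$. I expect the clean way through is to reduce to the worst case by monotonicity first—pushing $d_R$ down to $d_u$ and the auxiliary son-degrees down to their minimum permitted values using Proposition~\ref{prop:2}, exactly as in the proof of Theorem~\ref{thm:2.1}—so that $\Delta(T,T')$ becomes a function of essentially one or two integer variables, and then either verify positivity by a finite computer check for the bounded range together with an asymptotic Taylor estimate for the unbounded tail, or find a single surgery whose gain is uniformly positive. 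A secondary subtlety is bookkeeping the edges incident with $R$ whose contribution changes when $u$'s degree is modified; these are handled as in~\eqref{example}, by bounding each term $f(d_R,d_x)-f(d_R+c,d_x)$ from below using $d_x\ge 3$ (here in fact $d_x\ge 2$, since the other sons of $R$ need only satisfy Corollary~\ref{cor:degrees decrease}) and Proposition~\ref{prop:2}.
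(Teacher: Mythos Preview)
Your two-part plan has a structural gap in Part~1. The Lemma~\ref{lem:B} template works because a $B$-exceptional vertex \emph{by definition} has both a degree-2 son and a higher-degree son, so there is always a light subtree to swap against a heavy one. A $U$-exceptional vertex carries no such guarantee: it only has a son of degree $\ge 6$, and the existence of a son of degree $\le 5$ is exactly the second assertion you are trying to prove. Concretely, suppose $R$ has two sons $u_1,u_2$ of equal degree, each of whose sons all have degree~7. Both are $U$-exceptional, yet every similarity exchange among their children is trivial (all degrees match), so no $\succ$-improvement is available. You cannot rule this configuration out by exchanges alone; a genuine ABC-decreasing surgery is needed. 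There is also a level confusion in your invocation of Lemma~\ref{lem:similar}(b): when you swap a son of $u_1$ with a son of $u_2$, the parents in that lemma are $u_1$ and $u_2$, not $R$, so the fact that $\hat u_1=\hat u_2=R$ is irrelevant. And ``detach $T_{w_2}$ and reattach it under $u_1$'' is not the exchange operation $T(v,v')$ at all---it changes two vertex degrees and is not covered by Lemma~\ref{lem:similar}.

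The paper proceeds in the opposite order and with a different surgery. It takes $u$ to be the $\succ$-largest son of $R$ and \emph{contracts the edge $Ru$} (adding a pendant leaf elsewhere to keep $|V(T)|$ fixed). The change $\Delta(T,T')$ is then bounded below, via Proposition~\ref{prop:2}, by an expression in $d_R,d_u$ and the minimum degrees $d_y,d_z$ among the sons of $u$ and the remaining sons of $R$; when $d_y,d_z\ge 6$ this is shown positive, a contradiction. Since (P1) forces $d_{z_j}\ge d_{y_i}$, the only surviving case is that $u$ has a son of degree $\le 5$. Uniqueness then drops out of (P3) and Lemma~\ref{lem:similar}(a): any other son $z$ of $R$ with $d_z\le d_u$ must have all its sons of degree $\le 5$, because a son $w$ of $z$ with $d_w>5\ge d_y$ would make $T(y,w)$ strictly ABC-smaller (or, in the equal-degree case, violate (P3)). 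So the surgery does the real work for \emph{both} assertions simultaneously; your Part~1 cannot be decoupled from Part~2. Your proposed Part~2 surgery (splitting $T_w$ and redistributing) is also not the one used and is too vague to assess, but note that once you assume all sons of $u$ have degree $\ge 6$, (P1) already gives all $d_{z_j}\ge 6$, so the paper's contraction is available.
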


\begin{proof}
Let $R$ be the root of $T$ and $u$ be the neighbor of $R$ with the $\succ$-largest subtree $T_u$. Note that this implies that $u$ is a son of $R$ with maximum degree. We may assume that $d_u > 2$. We will contract the edge $Ru$ and add a neighbor to some vertex $w$ of degree one instead. By Theorem \ref{thm:1.1}, the neighbor of $w$ has degree 2 and is thus different from $u$. The change is shown in Figure \ref{fig:dis2}, where $y_i$ $(i=1, \ldots, d_u-1)$ are sons of $u$ and $z_j$ $(j=1, \ldots, d_R-1)$ are the neighbors of $R$ different from $u$.

\begin{figure}[htb]
    \centering
    \includegraphics[width=0.67\textwidth]{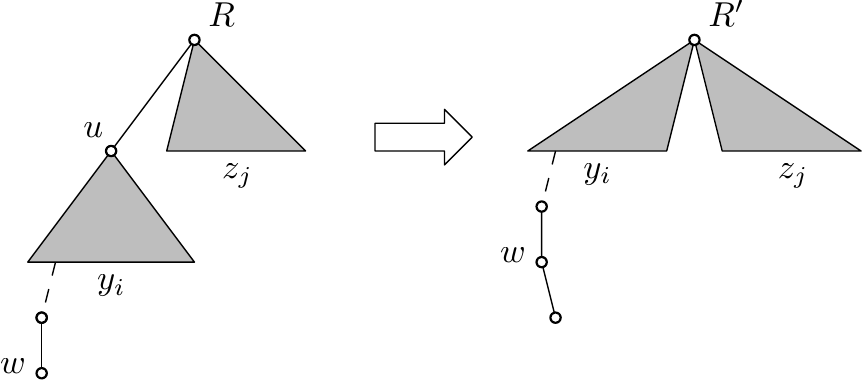}
    \caption{Suggested change for the proof of Lemma \ref{lem:distance2fromR}.}
    \label{fig:dis2}
\end{figure}

Note that $d_R\geq d_u$, $d_{z_j}\geq d_{y_i}$ ($j=1, \ldots, d_R-1$; $i=1, \ldots, d_u-1$). As in our earlier proofs, let $\alpha(T)$ ($\alpha(T')$) be the sum of $f$-values of those edges in $T$ ($T'$) whose contribution to the ABC-index has changed:
$$\alpha(T )=f(2,1)+f(d_R,d_u)+\sum_{j=1}^{d_R-1} f(d_R,d_{z_j})+\sum_{i=1}^{d_u-1} f(d_u,d_{y_i})$$
and
$$\alpha(T' )=f(2,1)+f(2,2)+\sum_{j=1}^{d_R-1} f(d_R+d_u-2,d_{z_j})+\sum_{i=1}^{d_u-1} f(d_R+d_u-2,d_{y_i}).$$

We consider a vertex $z\in \{z_j \ |\  j=1, \ldots, d_R-1\}$ such that $f(d_R,d_{z_j}) - f(d_R+d_u-2,d_{z_j}) \geq f(d_R,d_z) - f(d_R+d_u-2,d_z)$ for all $j=1, \ldots, d_R-1$. Considering a similar inequality for the sons of $u$ (and denoting by $y$ the corresponding vertex where the minimum is attained), we have:
\begin{align}
\alpha(T) - \alpha(T') \geq
  & ~f(d_R,d_u) - f(2,2) +(d_R-1)(f(d_R,d_z) - f(d_R+d_u-2,d_z)) + \nonumber\\
  & ~(d_u-1)(f(d_u,d_y) - f(d_R+d_u-2,d_y)).\label{eq:5*}
\end{align}

Proposition \ref{prop:2} shows that the two differences within the parentheses in (\ref{eq:5*}) are increasing in terms of parameters $d_z$ and $d_y$. Because it suffices to consider the worst case, we may consider their smallest value. If $d_z,d_y\ge6$, we take the value 6. Then (\ref{eq:5*}) changes to:
\begin{align}
\alpha(T) - \alpha(T') \geq
  & f(d_R,d_u) - f(2,2) +(d_R-1)(f(d_R,6) - f(d_R+d_u-2,6)) + \nonumber\\
  & (d_u-1)(f(d_u,6) - f(d_R+d_u-2,6)).\label{eq:6*}
\end{align}
For every $d_R\ge 100$, the value on the right-hand side of (\ref{eq:6*}) is decreasing and its smallest value is when $d_u=d_R$.\footnote{The same holds when $d_R<100$, although the function is not always decreasing.} By considering the values when $d_u=d_R$, we see that the values decrease when $d_R$ grows and that the values are always positive.\footnote{The lower bound becomes $\sqrt{6}/3 - \sqrt{2}/2 > 0$ in the limit when $d_R\to\infty$.} This gives a contradiction when $d_R\ge100$.
On the other hand, we have calculated the lower bound in (\ref{eq:6*}) for all values $d_u\le d_R\leq 100$ by computer and it turns out that we always have $\Delta(T,T')>0$. We conclude that $\abc(T) > \abc(T')$ for all values of $d_R$. This contradiction completes the proof when $d_z,d_y\ge6$.

Now, let us consider the case where the degrees of some of $y_i$ or $z_j$ are less than $6$ and degrees of some of them are $\geq6$.
By property (P1) of ABC-extremal trees, we see that $d_{z_j}\geq d_{y_i}$ for all $j\in\{1, \ldots, d_R-1\}$ and $i\in\{1, \ldots, d_u-1\}$.

Suppose first that $d_{z_1}<6$. Then $d_{y_i}<6$ for all $i\in\{1, \ldots, d_u-1\}$.
Next, consider any other neighbor $z_j$ of $R$. Recall that $T_u\succeq T_{z_j}$. If $d_u=d_{z_j}$, property (P3) implies that all sons of $z_j$ have degree at most 5. If $d_u>d_{z_j}$, the same conclusion follows by Lemma \ref{lem:similar}(a).
This shows that all vertices at distance 2 or more from $R$ have degree at most 5.

We may now assume that $d_{z_j}\geq6$ for all $j\in\{1, \ldots, d_R-1\}$ and that $y_1$ has degree less than $6$. In the same way as above, we see that all sons of $z_1,\dots,z_{d_R-1}$ have degree $\le5$. Hence, at most one neighbor of the root, namely $u$, may have sons of degree $\ge6$ and of degree $\le5$.

Since $d_{u} > d_{y_i}$ (by Theorem \ref{thm:2.1}) and $u$ has a descendant of degree $\leq 5$, the degrees of all descendants of $y_i$ are less than 6 by (P1). This completes the proof.
\end{proof}

In the remainder of this section we will show that $U$-exceptional branches do not exist in ABC-extremal trees, see Lemma \ref{lem:noU-exceptional}. This will make the proof of Theorem \ref{main} complete.

\begin{lemma} \label{lem:d5}
Every ABC-minimal tree has at most eleven vertices of degree 3 and at most four vertices of degree 5. Moreover, there is at most one $B_5$-branch.
\end{lemma}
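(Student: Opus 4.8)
The plan is to prove each of the three bounds by the same local-exchange philosophy used throughout the paper: assume an ABC-minimal tree $T$ has too many vertices of a given degree, and produce a modified tree $T'$ with $\Delta(T,T')>0$, contradicting minimality. The guiding idea is that many vertices of degree $3$ (or of degree $5$, or several $B_5$-branches) can be regrouped into structures built out of $B_3$-branches and $C$-branches, which are cheaper for the ABC-index. I would first invoke the structural results already available: by Theorem~\ref{thm:1.1} every degree-$1$ vertex hangs off a degree-$2$ vertex, and by Corollary~\ref{cor:degrees decrease} degrees strictly decrease away from the root (with the two sporadic exceptions). So a vertex of degree $3$ or $5$ is almost always the root of a $B_k$-branch or sits near the bottom of the tree, which pins down the local picture around it and makes the exchange computations finite.

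For the degree-$3$ bound, I would argue that a degree-$3$ vertex $v$ far from the root is (up to the $2$-$2$ exceptions controlled by Lemma~\ref{lem:2-2edge}) the root of a $B_2$-branch or a $B_2^*$-branch. Lemma~\ref{lem:similar} and the siblings lemma (Lemma~\ref{B_k_equal}) force $B_2$- and $B_3$-branches that are siblings to coexist only in bounded numbers, since $|k-l|\le 1$ lets us convert two adjacent $B_2$'s into a $B_1$ and a $B_3$ and compare indices. The strategy is: if there were twelve or more degree-$3$ vertices, then (after applying (P1)--(P3) to collect them, via similarity exchanges, into a compact family near one location) some pair of $B_2$-branches could be merged/rebalanced into a configuration dominated by $B_3$-branches, yielding $\Delta(T,T')>0$. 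The explicit constant $11$ will come out of the threshold in the corresponding $f$-value inequality, checked by the same Proposition~\ref{prop:2} monotonicity plus a finite computer verification as in the proofs of Theorem~\ref{thm:2.1} and Lemma~\ref{lem:distance2fromR}.

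The degree-$5$ bound and the single-$B_5$ claim are handled together. Since Lemma~\ref{B_k branches are small} already gives $k\le 5$, a degree-$5$ vertex is the root of a $B_4$-branch, a $B_4^*$-branch, or has a non-$B$-branch subtree. Using Lemma~\ref{B_k_equal} again, a $B_5$-branch can only be a sibling of $B_4$- or $B_5$-branches, so several $B_5$'s (or several degree-$5$ vertices) must cluster; I would then detach degree-$2$ paths and redistribute them to turn $B_5$'s into $B_4$'s and $B_4$'s into $B_3$'s, exactly as in Lemma~\ref{B_k_equal}, showing the index strictly drops once the count exceeds four (respectively once there are two $B_5$-branches). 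Each such rebalancing contributes a term of the form $f(d,6)-f(d,5)-\bigl(f(d,5)-f(d,4)\bigr)$ type, whose sign is controlled by the convexity/monotonicity in Proposition~\ref{prop:2}.

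The main obstacle, as usual in this program, is not the sign of a single exchange but ensuring the exchanges can be \emph{localized}: one must first use (P1)--(P3) and the similarity relation to move all the offending degree-$3$ (or degree-$5$) vertices into sibling positions under a common parent, so that a single bounded computation suffices. The delicate point is that these vertices may a priori be scattered at different heights and under parents of different degrees, and the global exchange (Lemma~\ref{lem:similar}(a)) only helps when the parent degrees compare the right way. I expect the bulk of the real work to be a careful case analysis confirming that, whenever the count exceeds the stated bound, two of the relevant branches can be brought together and merged with a strict decrease; the residual finite set of small-degree configurations is then dispatched by direct computation, mirroring the computer-assisted checks already used in this section.
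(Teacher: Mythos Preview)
Your outline has a conceptual slip and misses how the paper actually handles each part.

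First, the degree-$3$ bound of eleven is not re-proved in this paper at all; it is simply cited from \cite{ref21,ref19,ref20}. The only new observation is that Theorem~\ref{thm:2.1} forces any non-root degree-$3$ vertex $v$ to have both sons of degree $2$, so $T_v$ is a $B_2$- or $B_2^*$-branch; the bound on the number of such branches is then quoted. Your plan to manufacture the constant $11$ from an $f$-value threshold after clustering $B_2$'s is neither what is done nor obviously feasible---the constant arises from a separate analysis in the cited papers, not from a single local exchange.

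Second, and more seriously, you conflate ``degree-$5$ vertex'' with ``$B_5$-branch''. A $B_k$-branch has $k$ sons, so its non-root root has degree $k+1$; hence a degree-$5$ vertex is the root of a $B_4$-branch (never $B_5$), and a $B_5$-branch contributes a degree-$6$ vertex. Your sentence ``a $B_5$-branch can only be a sibling of $B_4$- or $B_5$-branches, so several $B_5$'s (or several degree-$5$ vertices) must cluster'' mixes these two objects and the argument built on it does not control degree-$5$ vertices. Also, $B_4^*$ is already excluded by Lemma~\ref{B_k branches are small}, so it should not appear in your case list.

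The paper's degree-$5$ argument is quite different from your clustering-and-rebalancing plan. It fixes one degree-$5$ vertex $v$, observes by Corollary~\ref{cor:degrees decrease} that its four sons have degrees in $\{2,3,4\}$, and handles the pure cases separately: all sons of degree $2$ means $T_v$ is a $B_4$, and the bound of four $B_4$'s is again cited from \cite{ref18}; all sons of degree $4$ or all of degree $3$ are killed by explicit replacements (Figures~\ref{fig:4d4} and~\ref{fig:4d3}). For mixed cases it uses similarity exchanges to pool the sons of \emph{all} degree-$5$ vertices and counts: at most three degree-$4$ sons and at most three degree-$3$ sons in total, hence at least thirteen degree-$2$ sons if there were five degree-$5$ vertices, which forces too many $B_4$'s or $B_1^-$'s and yields a contradiction via one more local move. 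Finally, the single-$B_5$ claim is proved by a specific replacement---two $B_5$'s become one $B_3^*$ plus two $B_3$'s attached to the father (whose degree increases by one)---not by a chain of $B_5\to B_4\to B_3$ rebalancings.
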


\begin{proof}
Theorem \ref{thm:2.1} shows that the only way to have a vertex $v$ of degree 3 is when $T_v$ is a $B_2$ or a $B_2^*$-branch. Note that the 2-2 edge of a possible $B_2^*$-branch can be moved to other branches by a similarity exchange, so we may assume that there is no $B_2^*$ in our ABC-extremal tree $T$ (unless all vertices of degree 1 are within $B_2$-branches and one $B_2^*$-branch).
It is shown in \cite{ref21, ref19, ref20} that in any ABC-minimal tree there are at most 11 $B_2$- or $B_2^*$-branches, so there are at most 11 vertices of degree 3.

Let us assume for a contradiction that we have at least 5 vertices of degree 5 and let $v$ be one of them with $u_i$ ($i=1, 2, 3, 4$) as its descendants. Corollary \ref{cor:degrees decrease} indicates that $2\leq d_{u_i}< 5$. If all of them have degree 2 it means that we have a $B_4$ (since $B_4^*$ does not exist by Lemma \ref{B_k branches are small}) and it is known that at most four $B_4$-branches can exist \cite{ref18}. 
If every $u_i$ has degree 4, the change indicated in Figure \ref{fig:4d4} gives us an ABC-smaller tree, and if all $u_i$ have degree 3 the change shown in Figure \ref{fig:4d3} gives us an ABC-smaller tree.
Note that the sons of different degree-5 vertices can be reshuffled by similarity exchanges (without changing the value of the ABC-index). Thus we could group their sons of degree 4 or 3 together. This implies that the vertices of degree 5 cannot have more than three sons of degree 3 all together, and at most three sons of degree 4 all together. Therefore we have at least 13 sons of degree 2, which gives us 3 copies of $B_4$. If we have more than one vertex of degree 3 or 4 as children of vertices of degree 5, then by similarity exchanges we can get at least 6 copies of $B_1^-$ (sons of degree 2 that are not part of a $B_k$ branches) which is not possible in any ABC-minimal tree (see \cite{ref19}). And if we have at most one vertex of degree $>2$ among the children of vertices of degree 5, then simply detach it from its parent and attach it to the grandparent. It is easy to check that this improves the ABC-index. This completes the proof and shows that we have at most 4 vertices of degree 5.

Suppose now that $T$ contains two copies of $B_5$. We replace one of them with a $B_3^*$ and the other one with two copies of $B_3$ (so the degree of the father of one of $B_5$-branches increases by 1). It is easy to see that this decreases the ABC index. This completes the proof.
\end{proof}

\begin{figure}[htb]
    \centering
    \includegraphics[width=0.75\textwidth]{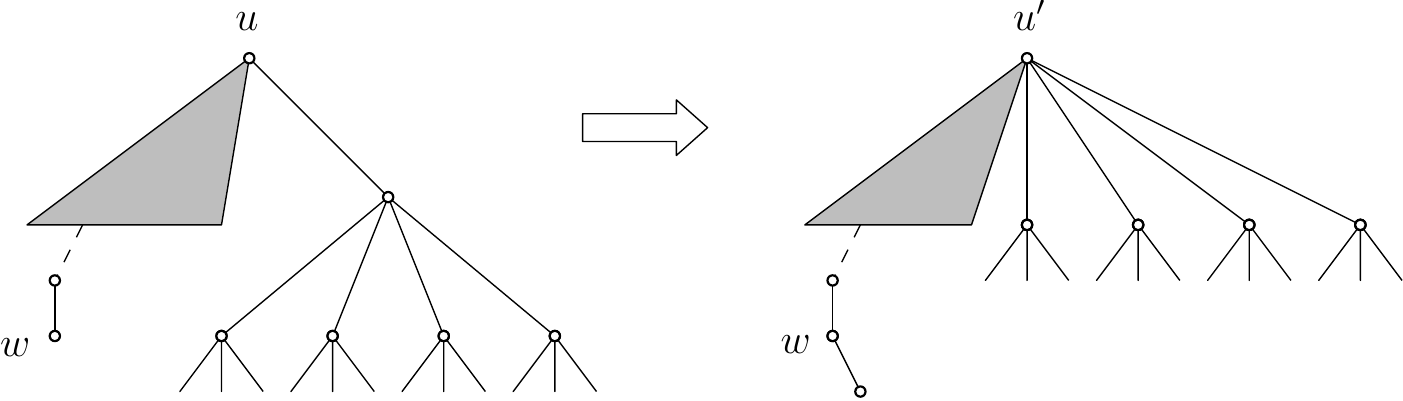}
    \caption{The change of a tree when a 5-vertex has four sons of degree 4.}
    \label{fig:4d4}
\end{figure}

\begin{figure}[htb]
    \centering
    \includegraphics[width=0.75\textwidth]{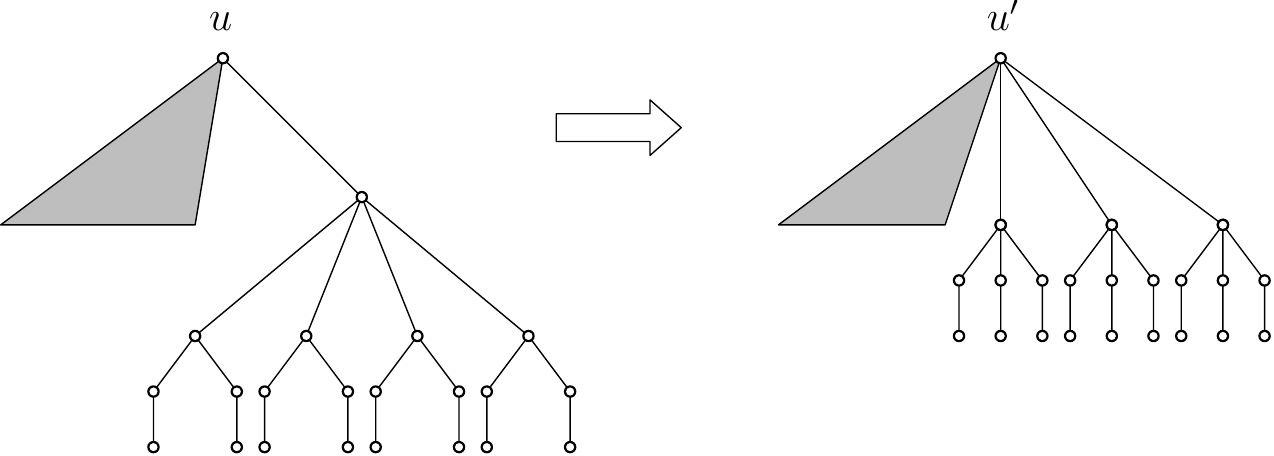}
    \caption{The change of a tree when a 5-vertex has four sons of degree 3.}
    \label{fig:4d3}
\end{figure}

\begin{lemma} \label{lem:deg2root}
If\/ $T$ is an ABC-extremal tree with at least 40 vertices, then no neighbor of the root can have degree~$2$.
\end{lemma}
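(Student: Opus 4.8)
The plan is to assume, for contradiction, that a son $v$ of the root $R$ has $d_v=2$, and to produce a tree $T'$ with $\abc(T')<\abc(T)$. Since the ABC-minimal trees of order at most $1100$ are listed in \cite{comp1100} and obey the claim, I restrict to larger $n$; because the degrees strictly decrease along every descending path (Corollary \ref{cor:degrees decrease}) while all vertices past distance $1$ have bounded degree (Lemma \ref{lem:distance2fromR} together with Lemma \ref{B_k branches are small}), a large order forces a large maximum degree $\Delta$, which is the regime the estimates below exploit. By Corollary \ref{cor:degrees decrease} and Lemma \ref{lem:2-2edge} (at most one $2$-$2$ edge, and none once $n\ge415$) I may assume the son $w$ of $v$ is a leaf, so $R$ carries a pendant path $R-v-w$; the identity $f(2,y)=\tfrac{\sqrt2}{2}$ will make this path's two edges essentially free to relocate.

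First I would try the switching operation of Lemma \ref{lem:similar}. If $T$ contains a $B_3$-branch whose root $p$ (of degree $4$) has a parent $q$ with $d_q<\Delta$, then $T_v$ and $T_p$ are disjoint, $d_R=\Delta>d_q$, and $d_v=2<4=d_p$; hence Lemma \ref{lem:similar}(a) yields $\abc(T)>\abc(T(v,p))$, the desired contradiction. Such a branch is present whenever $T$ has any $C$-branch (its $B_3$-subbranches hang off a root of degree $<\Delta$), or more generally whenever some $B_3$-branch is not attached to one of the at most two maximum-degree vertices (Theorem \ref{thm:2.1}).

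It remains to treat the case in which every $B_3$-branch is attached to a degree-$\Delta$ vertex. A key consequence, using Lemma \ref{lem:d5} (few vertices of degree $3$ or $5$), Lemma \ref{B_k branches are small} ($B_k$ with $k\le5$) and Lemma \ref{lem:distance2fromR}, is that then every son of $R$ except at most one has degree bounded by a fixed constant, since a high-degree son would be a large $C$-branch root and would produce $B_3$-branches with a parent of degree $<\Delta$. Here I would relocate the pendant path so as to turn one $B_3$-branch (attached to $R$) into a $B_4$-branch while simultaneously lowering $d_R$ to $\Delta-1$; because $f(\cdot,2)$ is constant, the only surviving changes give
\[
\Delta(T,T') = \bigl(f(\Delta,4)-f(\Delta-1,5)\bigr) + \sum_{z}\bigl(f(\Delta,d_z)-f(\Delta-1,d_z)\bigr),
\]
where $z$ runs over the remaining sons of $R$ (the case where the chosen $B_3$-branch hangs from the second maximum-degree vertex is analogous). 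The first term is a positive constant (about $0.05$), while each summand is nonpositive by Proposition \ref{prop:2}; but since the degrees $d_z$ are bounded with at most one exception of order $\Delta$, the whole sum is only $O(1/\Delta)$. Thus $\Delta(T,T')>0$ for all large $\Delta$.

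The main obstacle is precisely this residual case: I must rule out that $R$ simultaneously has a degree-$2$ son and many high-degree sons, since lowering $d_R$ would then raise too many root edges at once and the $O(1/\Delta)$ bound on the sum would collapse. The structural lemmas of the preceding sections are exactly what prevents this, because high-degree sons are forced to be $C$-branch roots whose internal $B_3$-branches have parents of degree $<\Delta$ and hence already fall under the switching argument; only when every son has bounded degree does the cost stay $O(1/\Delta)$ and get beaten by the constant gain. Finally the two regimes must be stitched together: the displayed inequality is verified by a short computation for $\Delta$ up to a moderate bound and by a Taylor expansion in $1/\Delta$ beyond it, with the remaining small orders handled by \cite{comp1100}.
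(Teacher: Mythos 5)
Your overall strategy --- use the degree-$2$ son $v$ to force the tree to collapse into small ending branches hanging from the root, then absorb the pendant path at $v$ into a $B_3$-branch to form a $B_4$ --- is the paper's strategy, and your displayed relocation is exactly the paper's final step (``replace $B_k$ and $B_1^-$ with a $B_{k+1}$''). But as written there are two concrete gaps. First, you apply the switching of Lemma \ref{lem:similar} only to $B_3$-roots, whereas the decisive observation (property (P1) applied with $u=v$) is that \emph{every} vertex at distance $2$ from $R$ has degree at most $2$: for any such $y$ with $d_y\ge 3$, the exchange of $T_v$ and $T_y$ either strictly decreases the ABC-index (Lemma \ref{lem:similar}(a)) or is a similarity exchange producing a $\succ$-larger tree, contradicting extremality. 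This makes $T$ a star of $B_1^-$- and $B_k$-branches ($k\le 5$), each on at most $11$ vertices, whence $d_R\ge \lceil n/11\rceil\ge 100$. Your substitute --- ``a large order forces a large $\Delta$'' from bounded degrees beyond distance $1$ --- only yields $\Delta=\Omega(\sqrt n)$, which for $n$ just above $1100$ is nowhere near enough to beat the $O(1/\Delta)$ loss in your sum; and your residual case has to lean on Lemma \ref{degree k} and Corollary \ref{cor:all but one}, which appear later in the paper, just to bound the sons' degrees.

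Second, your relocation presupposes that some $B_3$-branch is attached to a maximum-degree vertex, but nothing you prove rules out a root whose branches are all $B_1^-$, $B_2$, $B_4$ or $B_5$; in that configuration neither of your two cases produces a contradiction. The paper closes this by first eliminating four or more $B_1^-$-branches (replacing them by a single $B_3^*$, which is already profitable for $d_R\ge 12$) and then observing that with $d_R\ge 100$ and at most $3+4+1$ branches of types $B_1^-$, $B_4$, $B_5$, a $B_2$ or $B_3$ must remain to absorb the pendant path. Both gaps are fillable --- once $d_R\ge 100$ is in hand, the known bounds of at most $5$ copies of $B_1^-$, $11$ of $B_2$, $4$ of $B_4$ and $1$ of $B_5$ force an abundance of $B_3$-branches at the root --- but the missing step is precisely the full-strength structural collapse that the paper gets in one line from (P1), and without it neither the size of $\Delta$ nor the availability of a $B_3$-branch is justified.
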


\begin{proof}
As mentioned before, all ABC-minimal trees with at most 1100 vertices are known \cite{comp1100}. It turns out that the largest one among them having a degree-2 neighbor of the root has 39 vertices. Thus, we may assume that $n>1100$.

Suppose that $R$ has a neighbor $u$ of degree 2. By (P1), all vertices at distance 2 from $R$ have degree 2 or 1. If $T$ has a 2-2 edge, then we may assume that it is incident with $u$ by (P2). Thus, all neighbors of the root form $B_1^-$-branches and $B_k$-branches, where $1\le k\le 5$ (Lemma \ref{B_k branches are small}) which means that we will have a so called Kragujevac tree, see \cite{krag_tree}. Every $B_k$ ($1\le k\le 5$) has at most 11 vertices. Therefore $d_R\ge \lceil \tfrac{1100}{11} \rceil = 100$.

If there are four $B_1^-$-branches, we replace them by one $B_3^*$-branch. Let $x_1,\dots, x_{d_R-4}$ be the other neighbors of $R$. Then we have
\begin{eqnarray*}
   \Delta(T,T') &\ge& f(2,2) - f(4, d_R-3) + \sum_{i=1}^{d_R-4}(f(d_{x_i},d_R) - f(d_{x_i},d_R-3)) \\
   &\ge& f(2,2) - f(4, d_R-3) + (d_R-4)(f(6,d_R) - f(6,d_R-3))
\end{eqnarray*}
which is positive for $d_R\ge 12$, a contradiction.

So we have at most three $B_1^-$-branches. We may have up to four $B_4$ and one $B_5$. Thus, there must exist a $B_k$ for $k\in\{2,3\}$. Now we replace $B_k$ and $B_1^-$ with a $B_{k+1}$. Again, it is easy to see that for $d_R\ge17$, this change decreases the ABC-index, a contradiction.
\end{proof}

Recall that a vertex $u$ is in $B(T)$ if there exist vertices $v$ and $w$ such $u$ is the parent of $v$, $v$ is the parent of $w$, $d_v=2$ and $d_w=1$; and $T_u$ is a $B$-exceptional branch if $u$ also has a son of degree more than 2 or a son of degree 2 that is incident with a 2-2 edge. In the following lemma we will show that in addition to $B_k^*$-branches only one type of $B$-exceptional branches can exist in ABC-minimal trees.

\begin{lemma} \label{no_b_exception}
Each ABC-extremal tree either contains no $B$-exceptional branches, or contains a single $B$-exceptional branch which is isomorphic to $B_2^*$, $B_3^*$, or to the tree $B_3^{**}$ depicted in Figure \ref{B32}.
\end{lemma}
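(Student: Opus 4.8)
The plan is to classify the unique exceptional vertex. By Lemma~\ref{lem:B}, an ABC-extremal tree has at most one exceptional vertex $v\in B(T)$, and when it exists it satisfies $d_v=\max\{d_u\mid u\in B(T)\}$. In particular there is at most one $B$-exceptional branch, so the whole task reduces to determining the isomorphism type of $T_v$. Recall that $v$ always carries a $B_1$-arm (a son of degree $2$ whose own son is a leaf), and that $T_v$ is exceptional for exactly one of two reasons: either $v$ has a son of degree $2$ lying on a $2$-$2$ edge, or $v$ has a son of degree at least $3$. By Theorem~\ref{thm:1.1} and Corollary~\ref{cor:degrees decrease}, every other son of $v$ either has degree $2$ (and then roots a further $B_1$-arm) or has degree strictly smaller than $d_v$.

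First I would dispose of the case in which $v$ has no son of degree $\ge 3$. Then all sons of $v$ have degree $2$; the exceptionality comes from a single $2$-$2$ edge, of which there is at most one in the whole tree by Lemma~\ref{lem:2-2edge}, and all remaining sons are ordinary $B_1$-arms. Hence $T_v$ is a $B_k^*$-branch with $k=d_v-1$, and Lemma~\ref{B_k branches are small} forces $k\le 3$. This yields $T_v\cong B_2^*$ or $T_v\cong B_3^*$.

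The main case is when $v$ has a son $u$ with $d_u\ge 3$; here the target is $T_v\cong B_3^{**}$ (Figure~\ref{B32}), which is the only one of the three admissible branches possessing a son of degree $\ge 3$. The first step fixes $T_u$: by Corollary~\ref{cor:degrees decrease} we have $d_u<d_v$, and the argument in the proof of Lemma~\ref{lem:d5} shows that a vertex of degree $3$ can only root a $B_2$- or $B_2^*$-branch; since the unique $2$-$2$ edge may be assumed to sit on a $B_1$-branch by property (P2), this reduces the degree-$3$ possibility to $T_u=B_2$. The substance of the argument is then (i) to exclude $d_u\ge 4$, and (ii) to show that $v$ has exactly one son of degree $\ge 3$ and exactly two further $B_1$-arms, i.e.\ $d_v=4$. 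Both are carried out by local modifications of $T_v$, in the spirit of Lemmas~\ref{B_k branches are small}, \ref{B_k_equal} and \ref{lem:C_k}: either a balancing of the arm-counts between adjacent branches, or a split of $T_v$ into two smaller branches re-attached to the father $\hat v$ (raising $d_{\hat v}$ by one). In each modification one writes the gain $\Delta(T,T')$ as a sum of $f$-differences, cancels the degree-$2$ arm edges using $f(2,x)=\sqrt2/2$, and applies the monotonicity of Proposition~\ref{prop:2} to replace off-path degrees by their extreme admissible values. What remains is exactly the configuration $B_3^{**}$.

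The hard part is the main case. As in the proof of Theorem~\ref{thm:2.1}, each candidate modification produces a difference $\Delta(T,T')$ that must be shown to be strictly positive; after the reductions above these become inequalities in the few parameters $d_v$, $d_u$ and the degree of $\hat v$, to be handled by monotonicity for large degrees and a finite computer check for the remaining small ones. Two subtleties require care: the father $\hat v$ may be the root, so the usual inequality $d_{\hat v}\ge d_v$ must be justified from (P1) rather than from strict degree decrease; and a priori a degree-$\ge 3$ son could coexist with the $2$-$2$-edge arm at $v$, a hybrid configuration that must be separately reduced before the clean count $d_v=4$ can be reached.
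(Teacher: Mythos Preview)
Your decomposition into the two cases (no son of degree $\ge3$ versus at least one) and your handling of the first case via Lemma~\ref{B_k branches are small} are exactly how the paper proceeds. The overall philosophy you describe for the main case---local modifications of $T_v$, then Proposition~\ref{prop:2} to reduce to extremal parameter values, then a finite check---also matches.

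Where your plan and the paper diverge is in the choice of modifications and in the organisation of the main case. You propose to prove the two intermediate claims (i) $d_u=3$ and (ii) $d_v=4$ using either ``balancing arm-counts'' (in the style of Lemma~\ref{B_k_equal}) or a ``split of $T_v$ into two smaller branches'' (raising $d_{\hat v}$ by one, in the style of Lemmas~\ref{B_k branches are small} and~\ref{lem:C_k}). The paper instead uses a single modification---transfer the degree-$2$ arm from $v$ to its smallest son of degree $\ge3$---to dispose of all cases with $d_v\ge15$ at once, and then for $4\le d_v\le14$ it does \emph{not} split into two but performs a wholesale replacement: write $k=|T_v|$, delete $T_v$, and re-attach to $\hat v$ a collection of roughly $k/7$ copies of $B_3$ (adjusted according to $k\bmod 7$ by a $B_2$, $B_4$, $B_2^*$, $B_3^*$, or $B_3^{**}$), thereby raising $d_{\hat v}$ by a variable amount. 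This mod-$7$ replacement is what actually drives the small-$d_v$ computation through.

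The concern with your plan is that the two moves you name are not strong enough for the small-$d_v$ range. For example, take $d_v=5$ with one $B_1^-$ arm and three $B_3$-sons. The balancing move (push the $B_1^-$ onto a $B_3$, turning it into $B_4$) gives $\Delta(T,T')=f(d_{\hat v},5)-f(d_{\hat v},4)+2f(5,4)-2f(4,4)$, which is negative; and a split-into-two that raises $d_{\hat v}$ by only one still leaves an exceptional branch behind. The paper's replacement of the whole $24$-vertex subtree by two $B_3$'s and one $B_3^{**}$ (the $k\equiv3\pmod7$ case), increasing $d_{\hat v}$ by two, is what works. So your step (i) ``exclude $d_u\ge4$'' is not achievable with the restricted toolbox you list; you will need the more flexible replacement scheme. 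The two subtleties you flag (father possibly the root, and the coexistence of a $2$-$2$ arm with a degree-$\ge3$ son) are real; the paper handles the second by contracting the $2$-$2$ edge, running the same argument, and then uncontracting, with a separate small check that $B_3^{**}$ plus a $2$-$2$ edge cannot survive.
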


\begin{figure}[htb]
    \centering
    \includegraphics[width=0.2\textwidth]{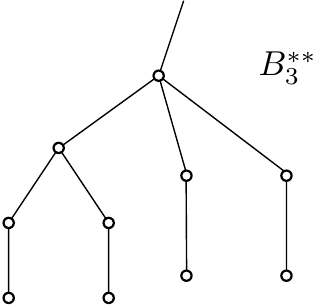}
    \caption{The only possibility for a $B$-exceptional branch different from $B_k^*$-branches.}
    \label{B32}
\end{figure}

\begin{proof}
Suppose first that there is a $B$-exceptional branch that is different from $B_k^*$ ($k\ge2$), and let $u$ be its root. Let $x$ be the child of $u$ with  the smallest degree, subject to the condition that $d_x\geq 3$. By the definition of $B(T)$ we know that $u$ also has a child of degree 2. Let $y_1,\dots, y_{d_u-3}$ be the remaining sons of $u$. By definition of $B(T)$, $u$ is not the root, and thus it has a predecessor $r$.
Consider the change shown in Figure \ref{no_B_excep}.

\begin{figure}[htb]
    \centering
    \includegraphics[width=0.5\textwidth]{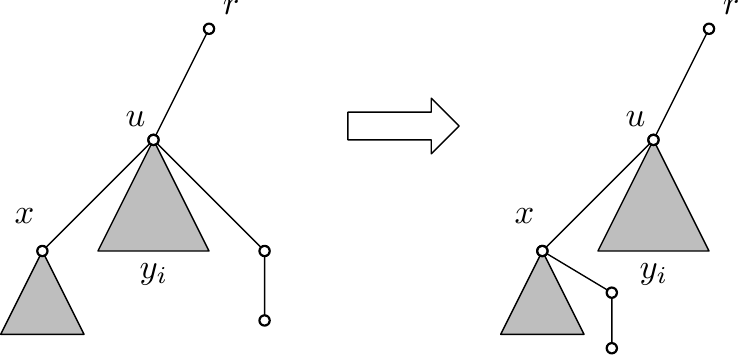}
    \caption{Changing the tree having a $B$-exceptional branch.}
    \label{no_B_excep}
\end{figure}

Since $u$ has a child of degree 2, (P1) implies that all children of $x$ and all children of $y_i$ are of degree $\leq2$. And since vertices of degree one are adjacent only to vertices of degree 2, $T_x$ and each of $T_{y_i}$ is a $B_1^-$ or $B_k$-branch for some $2\le k\leq 4$ (it is easy to see that $B_5$ and $B_1^-$ cannot happen at the same time; replace them with one $B_2$ and one $B_3^*$). Note that for this conclusion we also use (P2).
We will assume that there are no 2-2 edges. If there is one, we first contract it and make the same changes as in the continuation of this proof. After the changes, we uncontract the 2-2 edge. The fact is that the difference $\Delta(T,T')$ will be exactly the same as when there are no 2-2 edges. We will return to this at the end of the proof.

Since the edges incident to vertices of degree 2 have their $f$-value constant, we have:
\begin{eqnarray*}
 \Delta(T,T') & = &  f(d_r,d_u)+f(d_u,d_x)+\sum_{i=1}^{d_u-3}f(d_u,d_{y_i})\\
&& -f(d_r,d_u-1)-f(d_u-1,d_x+1)-\sum_{i=1}^{d_u-3}f(d_u-1,d_{y_i}).\\
\end{eqnarray*}
Using Proposition \ref{prop:2} we see that the above difference is decreasing in $d_r$, $d_x$ and $d_{y_i}$.
Therefore it suffices to show that the difference is positive when replacing $d_r$, $d_x$ and $d_{y_i}$ by largest values that are allowed for these degrees. We can replace $f(d_r,d_u)-f(d_r,d_u-1)$ with the limit when $d_r$ tends to infinity, which is equal to $\sqrt{1/d_u}-\sqrt{1/(d_u-1)}$. If $d_x=5$, then $u$ has at most three descendants $y_i$ whose degree is more than 2, since they would all be of degree 5 by our choice of $x$. In this case we would have:
$$
   \Delta(T,T') > \frac{1}{\sqrt{d_u}} + 4f(d_u,5) -\frac{1}{\sqrt{d_u-1}}-3f(d_u-1,5)-f(d_u-1,6)\\
$$
which is positive for every $d_u\geq13$.

On the other hand, if $d_x\le4$, then we similarly have:
$$
  \Delta(T,T') > \frac{1}{\sqrt{d_u}}+4f(d_u,5)+(d_u-6)f(d_u,4) -\frac{1}{\sqrt{d_u-1}}-5f(d_u-1,5)-(d_u-7)f(d_u-1,4)\\
$$
which is easily seen to be positive for every $d_u\geq 15$. Thus, we may assume from now on that $d_u\le 14$.

As mentioned above, the children of $u$ are copies of $B_1^-$ and $B_i$-branches for $i=2,3,4$. Let $k_i$ be the number of children whose subtrees are isomorphic to $B_i$ (for $i=2,3,4$) or $B_1^-$ (for $i=1$). We know that $4\leq d_u\leq 14$, $1\leq k_1\leq d_u-2$,
$0\leq k_2\leq \min\{d_u-2,11\}$, $0\leq k_3\leq d_u-2$ and $0\leq k_4\leq \min\{d_u-2,4\}$.
Also, either $k_2$ or $k_4$  is zero; moreover, $k_2$, $k_3$ and $k_4$ cannot all be zero.\footnote{There are additional restrictions that follow from (P1), but they are not needed for the proof.} Let ${\mathcal K}$ be the set of all 4-tuples $(k_1,k_2,k_3,k_4)$ satisfying all these conditions with $d_u=k_1+k_2+k_3+k_4$, and let
$$
   k = 1 + 2k_1 + 5k_2 + 7k_3 + 9k_4
$$
be the number of vertices in the $B$-exceptional branch. 
Note that we will have at least one $B_1^-$ and at least one $B_i$ branch and therefore $k\geq8$. Also $k=8$ cannot happen in the ABC-minimal tree because we will have a 3-3 edge. The only way to get $k=10$ (without contradicting the previously mentioned properties) is the $B_3^{**}$ branch which we are claiming to be the only B-exceptional branch in the absence of a 2-2 edge. Observe that $k=9$ or $11$ is not possible, thus we have $k\ge12$. For every possible $(k_1,k_2,k_3,k_4) \in {\mathcal K}$, we replace the $k$ vertices in the $B$-exceptional branch with copies of $B_3$-branches attached to $R$ as follows:

\begin{figure}[htb]
    \centering
    \includegraphics[width=0.65\textwidth]{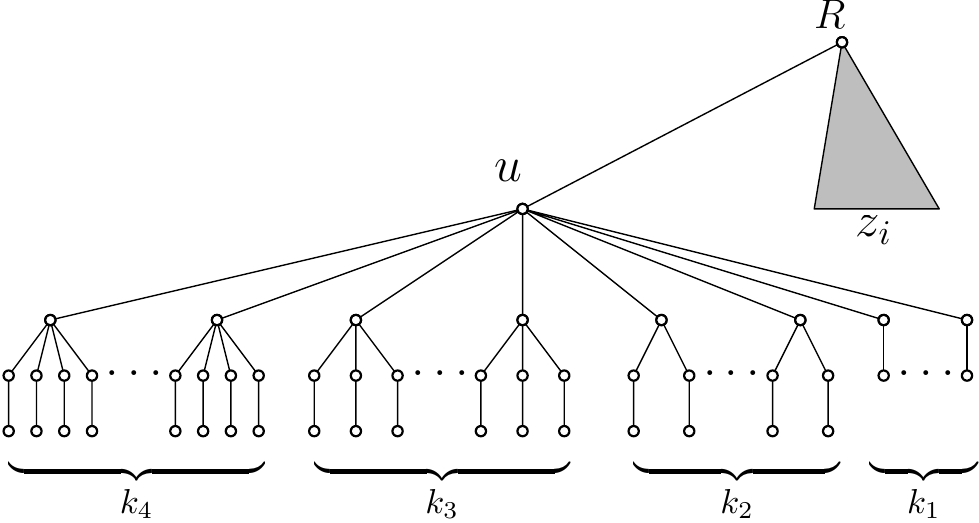}
    \caption{A tree having a $B$-exceptional branch when $d_u\leq 14$.}
    \label{b_excep}
\end{figure}

\begin{itemize}
\item If $k\equiv 0 \mod 7$, replace them with $\frac{k}{7}$ copies of $B_3$.
\item If $k\equiv 1 \mod 7$, replace them with $\frac{k-1}{7}-1$ copies of $B_3$ and one copy of $B_3^*$.
\item If $k\equiv 2 \mod 7$, replace them with $\frac{k-2}{7}-1$ copies of $B_3$ and one copy of $B_4$.
\item If $k\equiv 3 \mod 7$, replace them with $\frac{k-3}{7}-1$ copies of $B_3$ and one copy of $B_3^{**}$.
\item If $k\equiv 4 \mod 7$, replace them with $\frac{k-4}{7}-2$ copies of $B_3$ and two copies of $B_4$.
\item If $k\equiv 5 \mod 7$, replace them with $\frac{k-5}{7}$ copies of $B_3$ and one copy of $B_2$.
\item If $k\equiv 6 \mod 7$: For all cases except the one depicted in Figure \ref{b_excep2} replace them with $\frac{k-6}{7}$ copies of $B_3$ and one copy of $B_2^*$. For the remaining case, when $d_R\leq 94$ we can again replace them with two copies of $B_3$ and one copy of $B_2^*$, but for larger $d_R$ we replace them with one copy of $B_4$ and one copy of $B_5$.
\end{itemize}
\begin{figure}[htb]
    \centering
    \includegraphics[width=0.25\textwidth]{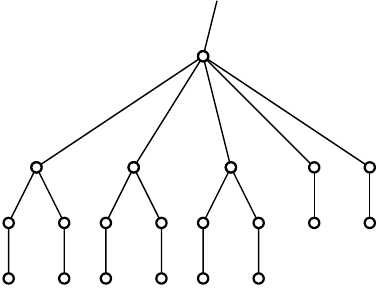}
    \caption{The $B$-exceptional branch that needs special treatment for $d_R\geq 95$.}
    \label{b_excep2}
\end{figure}

A simple verification using computer shows that in all cases, where $k\geq 12$ and $(k_1,k_2,k_3,k_4) \in {\mathcal K}$, the degree of $R$ increases and therefore the change in the ABC-index is increasing in term of $d_{z_i}$. Thus, to consider the worst case, we let $d_{z_i}=3$ (for $i=1,\ldots,d_R-1$). Now the change only depend on one variable ($d_R$) and it is easy to check by computer that the change is possible and it improves the ABC-index. Therefore the only $B$-exceptional branch different from $B_k^*$ for $k\ge2$ that can occur in ABC-extremal trees is $B_3^{**}$.

Let us now return to the case when we had a 2-2 edge. As mentioned above, the same proof works and we conclude that all ending branches (those based at vertices in $B(T)$) are $B_2$, $B_3$, $B_4$, $B_5$, and $B_3^{**}$. Now we put the 2-2 edge back. It can be added to any degree-2 vertex. By Lemma \ref{B_k branches are small}, there are no $B_k^*$ for $k=4,5$. If there is no $B_3^{**}$, then we obtain a single $B_2^*$ or $B_3^*$, as claimed. On the other hand, if $B_3^{**}$ (with its root $u$) is present, then we uncontract the 2-2 edge within this branch and then replace the whole branch with a $B_5$. It is easy to see that this change decreases the ABC-index (which is a contradiction) if the degree of the father $r$ of $u$ is at least 9. Thus, we may assume that $5\le d_r\le 8$.
In this case we can replace $B_3^{**}$ together with the expanded 2-2 edge by $B_2$ and $B_2^*$ attached to $r$. Then we have
\begin{eqnarray*}
   \Delta(T,T') &\ge& f(3,4) + f(4,d_R) - 2f(3, d_R+1) + \sum_{i=1}^{d_R-1}(f(d_{x_i},d_R) - f(d_{x_i},d_R+1)) \\
   &\ge& f(3,4) + f(4,d_R) - 2f(3, d_R+1) + (d_R-1)(f(3,d_R) - f(3,d_R+1))
\end{eqnarray*}
which is positive for $d_r\in \{5,6,7,8\}$.
\end{proof}

Let us observe that $B_3^{**}$ cannot be excluded in all cases. In fact, the tree in Figure \ref{fig:B3starstar} with $k=43$ $B_3$-branches and one $B_3^{**}$ is an ABC-minimal tree that contains $B_3^{**}$. It has $n=312$ vertices.
This is the smallest ABC-minimal tree containing a $B_3^{**}$, see \cite{ref25}.
However, we believe that $B_3^{**}$ cannot occur when $n$ is sufficiently large.

\begin{figure}[htb]
    \centering
    \includegraphics[width=0.32\textwidth]{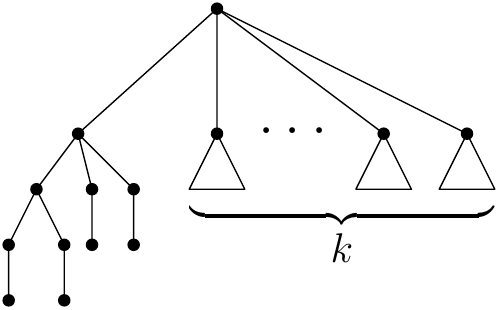}
    \caption{An ABC-minimal tree containing a $B_3^{**}$-branch.}
    \label{fig:B3starstar}
\end{figure}

By Lemmas \ref{lem:B} and \ref{no_b_exception} the following corollary is immediate.

\begin{corollary}\label{cor:endingbranches}
Let\/ $T$ be an ABC-extremal tree and $u$ be a non-root vertex with $d_u\ge3$ and with a son of degree 2. Then $T_u$ is isomorphic to one of the following: $B_2$, $B_2^*$, $B_3$, $B_3^*$, $B_3^{**}$, $B_4$, or $B_5$. Any $B$-exceptional branch can occur at most once, $B_2$ can occur at most eleven times, $B_4$ can occur up to four times, and $B_5$ can occur at most once.
\end{corollary}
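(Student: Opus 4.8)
The plan is to show that the hypothesis on $u$ forces $u$ to be a vertex of $B(T)$, and then to read off both the list of admissible subtrees and the multiplicity bounds directly from the structural lemmas already established, so that essentially no new estimation is needed.

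First I would pin down the bottom of $T_u$. Let $v$ be a son of $u$ with $d_v=2$, and let $w$ be the unique son of $v$. By Corollary~\ref{cor:degrees decrease} the degrees strictly decrease from $u$ downward, the only admissible deviation being a single $2$-$2$ edge, so $d_w\in\{1,2\}$. If $d_w=1$, then $u$ has a son of degree $2$ whose son has degree $1$, which is exactly the defining condition for $u\in B(T)$. If $d_w=2$, we have a $2$-$2$ edge; by Lemma~\ref{lem:2-2edge} this edge lies in a $B_1$-portion of the branch, so $T_u$ is a $B_k^*$-branch, and since $d_u\ge 3$ (i.e.\ $k\ge 2$) the vertex $u$ still possesses an ordinary $B_1$-son, so again $u\in B(T)$. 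In either case $u\in B(T)$, and by the definition preceding Lemma~\ref{lem:B} this splits into two cases: either $u$ is the root of a $B_k$-branch with $k=d_u-1\ge 2$, or $u$ is an exceptional vertex and $T_u$ is a $B$-exceptional branch.

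Next I would treat the two cases separately. If $T_u$ is a $B_k$-branch, then $k\ge 2$, and Lemma~\ref{B_k branches are small} gives $k\le 5$, so $T_u\in\{B_2,B_3,B_4,B_5\}$. If instead $u$ is exceptional---which includes every $B_k^*$-branch with $k\ge 2$, since the root of such a branch is exceptional---then Lemma~\ref{no_b_exception} pins $T_u$ down to one of $B_2^*$, $B_3^*$, or $B_3^{**}$. Taking the union of the two cases yields exactly the claimed list $B_2,B_2^*,B_3,B_3^*,B_3^{**},B_4,B_5$.

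Finally, the multiplicity bounds follow by citation. The assertion that any $B$-exceptional branch occurs at most once is precisely Lemma~\ref{lem:B}, which guarantees at most one exceptional vertex in $B(T)$ and thereby caps $B_2^*$, $B_3^*$, and $B_3^{**}$ simultaneously. For the remaining three types I would translate branch counts into vertex-degree counts: the root of a $B_2$-branch has degree $3$, the root of a $B_4$-branch has degree $5$, and a $B_5$-branch is bounded on its own; hence ``at most eleven $B_2$'s,'' ``at most four $B_4$'s,'' and ``at most one $B_5$'' are exactly the three conclusions of Lemma~\ref{lem:d5}. I expect the only point demanding genuine care to be the bookkeeping of the $2$-$2$ edge corner case in the first step (verifying that $u$ still lands in $B(T)$ when a pendant path of length three is present); once that is settled, the statement is an immediate consequence of Lemmas~\ref{lem:B} and~\ref{no_b_exception} together with the counting bounds of Lemmas~\ref{B_k branches are small} and~\ref{lem:d5}.
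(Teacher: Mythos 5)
Your assembly of the corollary is the same as the paper's, which derives it in one line from Lemmas \ref{lem:B} and \ref{no_b_exception}: you reduce to $u\in B(T)$, split into ``$u$ is the root of a $B_k$-branch'' (then $2\le k\le 5$ by Lemma \ref{B_k branches are small}) versus ``$u$ is exceptional'' (then $T_u\in\{B_2^*,B_3^*,B_3^{**}\}$ by Lemma \ref{no_b_exception}), and source the multiplicities correctly: at most one $B$-exceptional branch from Lemma \ref{lem:B}, and the bounds for $B_2$, $B_4$ and $B_5$ from the degree-$3$, degree-$5$ and $B_5$ counts of Lemma \ref{lem:d5}. All of that is fine.

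The one step that does not hold up as written is exactly the one you flagged. In the case $d_w=2$ you infer ``by Lemma \ref{lem:2-2edge} this edge lies in a $B_1$-portion, \emph{so} $T_u$ is a $B_k^*$-branch.'' Lemma \ref{lem:2-2edge} only tells you that $T_v$ is a pendant path of length three; it says nothing about the \emph{other} sons of $u$, so concluding that $T_u$ is a $B_k^*$-branch (equivalently, that every other son of $u$ has degree $2$) assumes what you are proving. A priori $u$ could have its unique degree-$2$ son carrying the $2$-$2$ edge while all remaining sons have degree at least $3$; then $u\notin B(T)$ and neither Lemma \ref{lem:B} nor Lemma \ref{no_b_exception} applies to $u$. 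This configuration does have to be excluded by an argument, e.g.: if some son $z$ of $u$ had $d_z\ge 3$, take a deepest leaf of $T_z$, its parent $p$ (of degree $2$ by Theorem \ref{thm:1.1}) and grandparent $q$ (of degree at least $3$, since a second $2$-$2$ edge is impossible); then $T(v,p)$ is a similarity exchange by Lemma \ref{lem:similar}(b) (both $v$ and $p$ have degree $2$) that moves the three-vertex subtree $T_v\succ T_p$ to the deeper slot inside $T_z$, which makes the sorted list of subtrees at $u$ lexicographically larger and hence yields a $\succ$-larger similar tree, contradicting extremality. Hence every other son of $u$ has degree $2$, its child is a leaf by uniqueness of the $2$-$2$ edge, and $u\in B(T)$ as you need. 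With that patch your proof is complete and coincides with the paper's intended argument.
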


Note that if there is $B_2^*$ or $B_3^*$, then it is just one of them and there cannot be any of $B_4$, or $B_5$, because in that case the 2-2 edge could be used to make a $B_4^*$ or $B_5^*$, contrary to Lemma \ref{B_k branches are small}.  Also, we cannot have a copy of $B_3^{**}$ together with a $B_4$ or a $B_5$ since this would contradict (P1) ($B_3^{**}$ contains a 4-3 edge and $B_4$ and $B_5$ have a 5-2 and 6-2 edge, respectively).


\subsection{$C_k$-branches}

\begin{lemma}\label{lem:k_is_51}
If\/ $T$ is an ABC-minimal tree with a vertex $u$ adjacent to (at least) $365$ roots of $C_k$-branches, then $k\leq52$.
\end{lemma}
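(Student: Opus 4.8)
\textbf{Proof proposal for Lemma \ref{lem:k_is_51}.}

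The plan is to argue by contradiction: assume $T$ is ABC-minimal, $u$ is adjacent to at least $365$ roots of $C_k$-branches, and $k\ge 53$, and then exhibit a local modification that strictly decreases the ABC-index. The natural modification mirrors the device used in Lemmas \ref{B_k_equal}, \ref{lem:C_k}, and \ref{no_b_exception}: redistribute the $B_3$-branches among the $C_k$-branches so as to lower $k$ toward the conjectured optimum $52$ while keeping the total number of $B_3$-branches fixed. Concretely, since each $C_k$-branch consists of $k$ copies of $B_3$ hanging off a degree-$(k+1)$ vertex, and all these $C_k$-roots share the common father $u$, I would take a block of these $C_k$-branches and re-slice them into $C_{52}$-branches (plus a small remainder handled separately). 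The reason $365$ and $52$ appear is arithmetic: $365 = 7\cdot 52 + 1$, so $365$ copies of $C_{52}$ use exactly $365\cdot 52$ $B_3$-branches, matching the count coming from fewer but larger $C_k$-branches; the hypothesis of having at least $365$ such branches guarantees enough material to perform the regrouping cleanly.

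The key computation will be to express $\Delta(T,T')$ as a sum of $f$-value differences. When we move a $B_3$-branch from one $C$-branch to another, the only edges whose contribution changes are the edges from $u$ to the affected $C$-roots and the edges from those $C$-roots to the relocated $B_3$-roots (which are $4$-vertices, giving $f(\cdot,4)$ terms). Writing $d_u = d$, a pure reshuffling that turns a $C_k$ and a $C_l$ ($k>l$) into $C_{k-1}$ and $C_{l+1}$ contributes, exactly as in the proof of Lemma \ref{B_k_equal},
\begin{equation*}
  f(d,k+1)+f(d,l+1)-f(d,k)-f(d,l+2),
\end{equation*}
together with the terms for the single $B_3$ whose root changes from a $(k+1)$-neighbor to an $(l+2)$-neighbor, namely $f(k+1,4)-f(l+2,4)$. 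By Proposition \ref{prop:2} the first quantity is increasing in the gap $k-l$ and vanishes when $k-l=1$, while $f(k+1,4)-f(l+2,4)$ is likewise controlled by Proposition \ref{prop:2}. The strategy is therefore to show that, starting from any configuration with some $C_k$ of size $k\ge 53$, equalizing the branch sizes toward $52$ strictly decreases the index; the count $365$ ensures that the equalized target is exactly $C_{52}$ rather than something larger, so the inequality $k\le 52$ is the tight conclusion.

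The main obstacle will be the bookkeeping of the remainder and the handling of whether $u$'s degree changes. Unlike the clean two-branch swap in Lemma \ref{B_k_equal}, replacing several large $C_k$-branches by a larger number of $C_{52}$-branches alters $d_u$ (it increases, since more $C$-roots now hang off $u$), and this forces the additional terms $\sum_i\bigl(f(d_u,d_{z_i})-f(d_u+t,d_{z_i})\bigr)$ over the other neighbors $z_i$ of $u$, exactly as in Lemma \ref{lem:C_k}. I would bound these using Proposition \ref{prop:2} by their worst case (smallest admissible $d_{z_i}$, which is at least the size of a $C$-root by (P1)/Lemma \ref{lem:similar}), reducing $\Delta(T,T')$ to a one-variable inequality in $d_u$, and then verify positivity for all relevant $d_u$ by the same Taylor-expansion-plus-finite-check method used in Theorem \ref{thm:2.1}: expand in powers of $1/k$ to show the leading coefficient is positive for large parameters, and check the finitely many small cases by computer. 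The delicate point is ensuring the regrouping is always realizable with the $365$ branches on hand (so that the remainder after writing the branch count as a multiple of $52$ can be absorbed into at most one off-size $C$-branch, consistent with Lemma \ref{lemma:C}), and that this absorption does not by itself cost more than the main saving; I expect this remainder analysis, rather than the core $f$-value inequality, to be where the real care is needed.
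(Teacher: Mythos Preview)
Your approach is essentially the paper's: replace a block of the $C_k$-branches hanging off $u$ by $C_{52}$-branches and show the ABC-index drops when $k\ge 53$, handling the change in $d_u$ via Proposition \ref{prop:2} and reducing to a one-parameter check.

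The one thing you are missing is the clean arithmetic that removes the ``remainder'' you worry about. A $C_k$-branch has $1+7k$ vertices, and $365=1+7\cdot 52=|C_{52}|$. Hence
\[
   365\cdot|C_k| \;=\; (7k+1)\cdot 365 \;=\; (7k+1)\cdot|C_{52}|,
\]
so replacing exactly the $365$ given $C_k$-branches by exactly $7k+1$ copies of $C_{52}$ preserves the vertex count with no leftover whatsoever; the degree of $u$ simply shifts by $7k+1-365=7(k-52)>0$. All of your concern about absorbing an off-size $C$-branch and invoking Lemma \ref{lemma:C} disappears. This identity is precisely why the hypothesis demands $365$ branches and why the threshold is $52$.

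Two smaller points. First, for the worst-case bound on the other neighbours of $u$ you only need $d_{z_i}\ge 4$ (there are $B_3$-roots among the grandchildren, so Lemma \ref{lem:similar} forces this), not the stronger ``at least a $C$-root'' you suggest. Second, you propose a Taylor-expansion-plus-finite-check for the resulting inequality; the paper instead invokes Lemma \ref{lem:C_k} to get $k\le 142$ a priori, leaving a genuinely finite computer verification over $53\le k\le 142$ and $d_u\ge 365$, which is simpler than controlling an asymptotic tail.
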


\begin{proof}
Suppose that there are $365 = 7\times 52 + 1$ copies of $C_k$ adjacent to $u$. Let us consider the remaining neighbors of $u$, and if there are any, let $R$ be one of them that has the highest degree. (If $u$ is not the root then $R$ is the parent of $u$). We can apply the change shown in Figure \ref{fig:k_is_51} to obtain a tree $T'$. Note that the two trees have the same number of vertices and that $d_{u'} = d_u+7k-364$.

\begin{figure}[htb]
    \centering
    \includegraphics[width=0.65\textwidth]{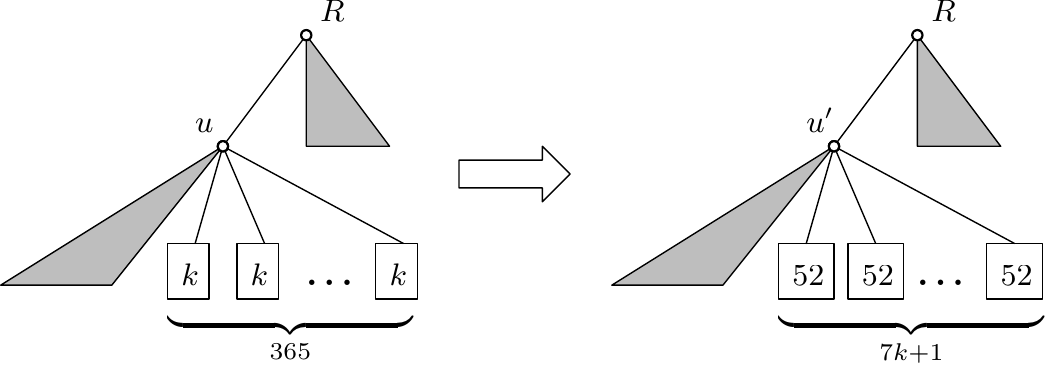}
    \caption{Suggested change when there are 365 copies of $C_k$.}
    \label{fig:k_is_51}
\end{figure}

Let $d=d_u-366$ and if $d\ne -1$, let $a_1,\dots, a_d$ be the degrees of the sons of $u$ in the shaded part. If $k>52$, we have $d_u<d_{u'}$ and by Proposition \ref{prop:2}, the differences 
$f(R,u) - f(R,u')$ and $f(u,a_i)-f(u',a_i)$ are increasing in terms of $d_R$ and each $a_i$. Suppose first that $R$ exists. Then it suffices to consider the case when $u$ is the root and $d_R=4$ and each $a_i=4$ (since $d_u>k+1$). Then we have:
\begin{eqnarray}
\Delta(T,T') &\ge& 365 \Bigl(f(d_u,k+1 )+k\Bigl(f(k+1,4) +6\tfrac{\sqrt{2}}{2}\Bigr)\Bigr) + (d+1)\, f(d_u,4) \nonumber \\
 && -(7k+1) \Bigl( f(d_{u'},53)+52\Bigl(f(53,4) + 6\tfrac{\sqrt{2}}{2}\Bigr)\Bigr) - (d+1)\,f(d_{u'},4). \label{eq:52}
\end{eqnarray}


Note that by Lemma \ref{lem:C_k}, $k<143$. For a fixed value of $k$ the equation only depends on $d_u$ and it is easy check by computer that the suggested change decreases the ABC-index for all $53\geq k \geq 142$ and $d_u\geq 365$.

If $R$ does not exist, then $d=-1$ and $\Delta(T,T')$ has the same lower bound (\ref{eq:52}). From this we obtain the same conclusion. This completes the proof.
\end{proof}

\begin{lemma}\label{lem:7k+8}
Let $u$ be a vertex in an ABC-minimal tree and let $k$ be a positive integer. If $k\leq 48$, then there are at most $7k+7$ copies of $C_k$-branches whose roots are the sons of $u$. If $d_u$ is greater than $474$ $(874$ and\/ $3273$, respectively$)$, then there are at most $7k+7$ copies of $C_k$ for $k=49$ $(k=50$ and $k=51$, respectively$)$.
\end{lemma}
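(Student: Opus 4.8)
The plan is to show that concentrating many copies of a small $C_k$ at one vertex is wasteful, and that it is always better to merge them into slightly larger $C_{k+1}$-branches (up to the stable value $k=52$). Suppose, for contradiction, that a vertex $u$ has at least $7k+8$ sons that are roots of $C_k$-branches. Pick exactly $7k+8$ of them and replace that whole sub-structure by $7k+1$ copies of $C_{k+1}$, whose roots are re-attached to $u$; call the result $T'$. Since a $C_m$-branch has $1+7m$ vertices, the identity
\[
(7k+8)(1+7k) = (7k+1)\bigl(1+7(k+1)\bigr) = 49k^2+63k+8
\]
shows that $T'$ has the same order $n$ as $T$: the $7$ vertices freed by using $7$ fewer $C$-roots are exactly enough to form the one extra $B_3$-branch that $7k+1$ copies of $C_{k+1}$ require, and no $2$-$2$ edge is ever created. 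The degree of $u$ drops by $7$, so $d_{u'}=d_u-7$. The goal is to prove $\Delta(T,T')=\abc(T)-\abc(T')>0$, contradicting ABC-minimality; this forces at most $7k+7$ copies of $C_k$ at $u$.

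Following the bookkeeping of Lemma~\ref{lem:k_is_51}, every internal edge of a $B_3$-branch has the constant value $f(2,1)=f(4,2)=\tfrac{\sqrt2}{2}$, so these contributions almost entirely cancel and leave a single residual term. Writing $z_i$ for the remaining sons of $u$ and $\varepsilon_R=f(d_R,d_u)-f(d_R,d_u-7)$ for the parent edge (present only when $u$ is not the root, with parent $R$), one obtains
\begin{align*}
\Delta(T,T') &= (7k+8)\bigl(f(d_u,k+1)+k\,f(k+1,4)\bigr) \\
&\quad - (7k+1)\bigl(f(d_u-7,k+2)+(k+1)f(k+2,4)\bigr) - 3\sqrt2 \\
&\quad + \sum_i\bigl(f(d_u,d_{z_i})-f(d_u-7,d_{z_i})\bigr) + \varepsilon_R .
\end{align*}
The first two lines represent the (positive) ``merging gain''; the remaining terms are penalties coming from the fact that lowering $d_u$ slightly raises the $f$-value of the other edges at $u$.

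The next step is to reduce this to a function of $d_u$ and $k$ alone, using Proposition~\ref{prop:2}. Each difference $f(d_u,d_{z_i})-f(d_u-7,d_{z_i})$ is decreasing in $d_{z_i}$, so it is smallest (most negative) at the largest admissible $d_{z_i}$. By Lemma~\ref{lemma:C} every $C$-sibling of the chosen branches has parameter within $1$ of $k$, while any non-$C$ son is a $B$-branch of degree at most $6$; hence $d_{z_i}\le k+2$ throughout. The worst case is therefore obtained by setting every $d_{z_i}=k+2$ and taking the maximal count $d_u-7k-8$ of such terms, so the sum is bounded below by $(d_u-7k-8)\bigl(f(d_u,k+2)-f(d_u-7,k+2)\bigr)$. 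Likewise $\varepsilon_R$ is decreasing in $d_R$, hence bounded below by its limit $\sqrt{1/d_u}-\sqrt{1/(d_u-7)}$ as $d_R\to\infty$. After these substitutions the right-hand side is an explicit expression $\Phi(d_u,k)$.

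It then remains to verify $\Phi(d_u,k)>0$. The penalty sum has magnitude $O(k/d_u)$, so it vanishes for large $d_u$; the merging gain, on the other hand, tends as $d_u\to\infty$ to the positive limit $(7k+8)/\sqrt{k+1}-(7k+1)/\sqrt{k+2}$ plus the branch-edge gain. For $k\le48$ this gain is large enough that $\Phi(d_u,k)>0$ for every admissible $d_u\ge 7k+8$, which can be checked directly by computer. As $k$ approaches $52$, however, the gain shrinks toward zero — reflecting that $C_{52}$ is the stable branch (merging $C_{52}\to C_{53}$ is no longer profitable, consistent with Lemma~\ref{lem:C_k}) — so for $k=49,50,51$ positivity holds only once the finite-$d_u$ penalty has decayed enough, giving the thresholds $d_u>474,\,874,\,3273$ respectively. \emph{The main obstacle is precisely this near-$52$ regime:} gain and penalty are both small and of comparable order, so one must control $\Phi$ to sufficient precision — e.g.\ via a Taylor expansion in $1/d_u$ as in the proof of Lemma~\ref{lem:k_is_51}, supplemented by a direct machine verification over the finite residual ranges of $d_u$ — to pin down these exact thresholds.
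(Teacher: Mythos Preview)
Your transformation and overall strategy match the paper's exactly: replace $7k+8$ copies of $C_k$ by $7k+1$ copies of $C_{k+1}$, note that $d_u$ drops by $7$, and bound the penalty terms using Proposition~\ref{prop:2}. The gap is in your bound on the degrees of the remaining neighbours $z_i$ of $u$.

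You assert that ``any non-$C$ son is a $B$-branch of degree at most $6$'' and conclude $d_{z_i}\le k+2$ for \emph{all} $z_i$. This is not available at this point of the argument: Lemma~\ref{lem:7k+8} is proved \emph{before} Theorem~\ref{main} is complete (it is used in the proof of Lemma~\ref{lem:noU-exceptional}), so one must still allow for a $U$-exceptional branch. What the paper actually shows --- using Lemma~\ref{lem:distance2fromR}, Lemma~\ref{lem:d5}, and Lemma~\ref{lemma:C} --- is that at most \emph{one} of the $x_i$'s can have degree exceeding $k+2$, and that one may have degree as large as $d_u-1$. Accordingly, the paper's worst-case bound sets $d_{x_1}=d_u-1$ and only the remaining $d_u-7k-10$ sons to $k+2$, yielding the extra penalty term $f(d_u,d_u-1)-f(d_u-7,d_u-1)$ in the final inequality. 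Since each difference $f(d_u,a)-f(d_u-7,a)$ is decreasing in $a$, replacing one $k+2$ by $d_u-1$ makes the lower bound strictly smaller; omitting this term, as you do, gives an over-optimistic $\Phi(d_u,k)$ and does not legitimately certify $\Delta(T,T')>0$. In particular, the thresholds $474$, $874$, $3273$ that you quote are precisely those coming from the paper's sharper bound \emph{with} this extra term, so they are not the thresholds your $\Phi$ would produce.
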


\begin{proof}
Let $u$ be a vertex that has $7k+8$ copies of $C_k$ as his children.
Let $x_1,\dots,x_m$ be the sons of $u$ that are not in the considered $C_k$-branches.
First we will discuss the degree of the vertices $x_i$. Let $\mathcal{I} = \{i : d_{x_i}>k+2\}$. As discussed at the end of Lemma \ref{lem:distance2fromR} when $u$ is not the root, the degree of all grandchildren of $u$ is at most 5. Since $d_{x_i}>k+2$ for $i\in \mathcal{I}$ and there are $B_3$-branches attached to vertices of degree $k+1$, then $x_i$ cannot have children of degree less than 4 (by Lemma \ref{lem:similar}(a). Therefore each $x_i$ ($i\in \mathcal{I}$) has children of degree 4 or 5 only. Recall that (by Lemma \ref{lem:d5}) we have at most 4 vertices of degree 5, and therefore all but at most one of the $x_i$ are roots of $C_{k'}$-branches (after possible similarity exchanges). Lemma \ref{lemma:C} shows that $k'\leq k+1$ and therefore $|\mathcal{I}|\leq1$.

Also note that when $u$ is the root, there is at most one $U$-exceptional branch (we can assume that it is $R$) and using the same argument as above, the degree of all but at most one of $x_i$'s is at most $k+2$.

\begin{figure}[htb]
    \centering
    \includegraphics[width=0.65\textwidth]{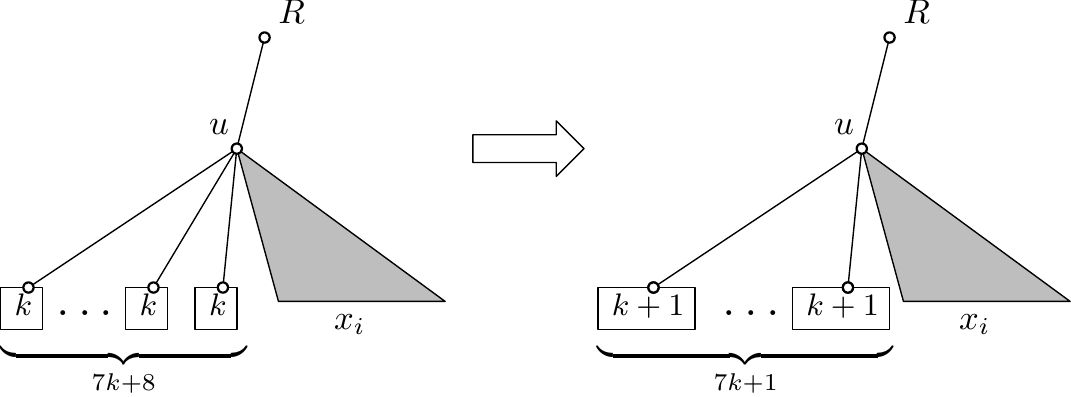}
    \caption{Compactifying $C_k$-branches when $k\leq 51$.}
    \label{number_of_C_k}
\end{figure}

Consider the change depicted in Figure \ref{number_of_C_k}.
The difference between the ABC-indices of the two trees is:
\begin{eqnarray*}
 \Delta(T,T') & = & f(d_R,d_u)-f(d_R,d_u-7)+(7k+8)f(d_u,k+1)-(7k+1)f(d_u-7,k+2)\\
&& +\sum_{i=1}^{d_u-7k-9}f(d_u,d_{x_i})-\sum_{i=1}^{d_u-7k-9}f(d_u-7,d_{x_i})\\
&& + k(7k+8) f(k+1,4)-(7k+1)(k+1)f(k+2,4)-6f(2,1).\\
\end{eqnarray*}
By Proposition \ref{prop:2} this difference is decreasing in $d_R$ and $d_{x_i}$.

Therefore it suffices to consider the limit when $d_R\to\infty$ and to let $d_{x_1}=d_u-1$ and $d_{x_i}=k+2$ for $i=2,\ldots,d_u-7k-9$ (when $u$ is the root the worst possible case for $d_R$ is $d_u-1$ but since the equation is decreasing in $d_R$, considering $d_R\to \infty$ is enough and covers other cases as well). We have:
\begin{eqnarray*}
 \Delta(T,T') & \geq & \frac{1}{\sqrt{d_u}}-\frac{1}{\sqrt{d_u-7}}+(7k+8)f(d_u,k+1)-(7k+1)f(d_u-7,k+2)\\
&& +(d_u-7k-10)\left(f(d_u,k+2)-f(d_u-7,k+2)\right)+ f(d_u,d_u-1)-f(d_u-7,d_u-1)\\
&& + k(7k+8) f(k+1,4)-(7k+1)(k+1)f(k+2,4)-6f(2,1).
\end{eqnarray*}
 Since for a fixed value of $k$ ($\leq 48$) the right hand side of this inequality only depends on $d_u$ ($\geq 7k+8$), one can check that $\Delta(T,T') > 0$ for all values of $d_u$ and for all $k\leq 48$.

Note that this change also works for $k=49,50$ and 51 if $d_u$ is at least 474, 874 and 3273, respectively.
\end{proof}

\subsection{There are no $U$-exceptional branches}

\begin{lemma}
\label{lem:noU-exceptional}
ABC-extremal trees have no $U$-exceptional branches.
\end{lemma}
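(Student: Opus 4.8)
The plan is to assume, for contradiction, that an ABC-extremal tree $T$ contains a $U$-exceptional branch, and to derive a cheaper tree. By Lemma~\ref{lem:distance2fromR} there is at most one such branch $T_u$, where $u$ is a son of the root $R$ of largest degree, $u$ has a son of degree at least $6$, and $u$ also has a neighbor of degree at most $5$. The strategy is to exploit the accumulated structural information about what can hang off $u$. By Theorem~\ref{main}'s weaker companion and the degree-decrease results (Corollary~\ref{cor:degrees decrease}), every grandchild of $R$ in $T_u$ has degree at most $5$ unless it is a ``high-degree'' son of $u$, and by Theorem~\ref{thm:2.1} all sons of $u$ have strictly smaller degree than $d_u$. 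The first step is therefore to classify the subtrees rooted at the sons of $u$: those of degree at most $5$ must be $B_k$-branches ($k\le5$) or $B$-exceptional branches (Corollary~\ref{cor:endingbranches}), while the sons of degree $\ge6$ must themselves be roots of $C_{k'}$-branches for suitable $k'$, since by Lemma~\ref{lem:similar} and the earlier analysis a vertex of degree $\ge6$ at distance $2$ from $R$ can only carry $B_3$-branches.

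Next I would use the counting lemmas to bound how many of each kind of branch can appear under $u$. Corollary~\ref{cor:endingbranches} caps the number of small ending branches ($B_2$ at most eleven times, $B_4$ at most four times, $B_5$ at most once, and each $B$-exceptional branch at most once), so all but $O(1)$ of the sons of $u$ are either $B_3$-branches or roots of $C_{k'}$-branches with $k'\le52$ (Lemmas~\ref{lem:k_is_51},~\ref{lem:C_k}). Combining with Lemma~\ref{lem:7k+8}, which bounds the number of $C_k$-branches with a common parent, the degree $d_u$ is controlled in terms of these multiplicities. The crucial observation is that $u$ simultaneously has a son of degree $\ge6$ (forcing $T_u$ to look locally like the root of the whole tree) and a son of degree $\le5$ (an ending branch). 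This coexistence is exactly the configuration that the exchange arguments are designed to break.

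The main exchange step is to detach the low-degree ending branch (or its defining path) from $u$ and reattach it to $R$, or equivalently to contract the edge $Ru$ and redistribute, thereby raising $d_R$ while lowering $d_u$ by one. I would write $\Delta(T,T')=\abc(T)-\abc(T')$ as a sum of $f$-differences, group them as in the proofs of Lemmas~\ref{lem:distance2fromR} and~\ref{lem:7k+8}, and apply Proposition~\ref{prop:2} to monotonize in the unconstrained degrees (sending $d_R\to\infty$ and setting the remaining sibling degrees to their extremal admissible values). Because $d_u\ge7$ (indeed $d_u$ is large, as $u$ carries a degree-$\ge6$ son whose own subtree forces $d_u>k'$), the resulting single-variable inequality in $d_u$ should be verifiable and strictly positive, contradicting minimality of $T$.

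The hard part will be organizing the case analysis so that the one permitted low-degree neighbor guaranteed by Lemma~\ref{lem:distance2fromR} can always be used as the piece to move, while ensuring the reattachment to $R$ genuinely decreases the index uniformly over the finitely many branch types and over the full range of $d_u$. In particular I expect the delicate cases to be when the ending branch under $u$ is a rare one ($B_5$ or a $B$-exceptional branch such as $B_3^{**}$), since these carry atypical internal edge-degree profiles; there one must check that the exchange does not inadvertently recreate a forbidden configuration (a $B_k^*$ with $k\ge4$, a second exceptional vertex, or a $C_k$ with $k\ge53$). Handling these few exceptional branch types by direct computation, after reducing the generic $B_3$/$C_{k'}$ situation by the monotonicity argument above, is where the real bookkeeping lies.
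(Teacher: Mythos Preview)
Your framework is right—invoke Lemma~\ref{lem:distance2fromR} to isolate a single $U$-exceptional $u$ with both a high-degree son $w$ ($d_w\ge6$) and a low-degree son, then classify the children of $u$ via Corollary~\ref{cor:endingbranches} and the $C_k$-bounds—but the decisive exchange is pointed the wrong way. You propose to move a \emph{low}-degree ending branch from $u$ to $R$ (or ``equivalently'' to contract $Ru$). The paper instead detaches $w$, the son of $u$ of \emph{highest} degree, and reattaches $T_w$ to $R$. This is not cosmetic. In either version $d_R\mapsto d_R+1$ and $d_u\mapsto d_u-1$, so every child $c$ that remains under $u$ contributes the negative term $f(d_u,d_c)-f(d_u-1,d_c)$, and by Proposition~\ref{prop:2} this penalty is most negative when $d_c$ is large; meanwhile the gain on the moved edge, $f(d_u,d_{\text{moved}})-f(d_R+1,d_{\text{moved}})$, is increasing in $d_{\text{moved}}$. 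Your move therefore simultaneously minimizes the gain and maximizes the loss (the heavy $w$ and the $x_l$ stay under $u$), whereas the paper's move does the opposite; by Lemma~\ref{lem:similar}(a) the paper's $T'$ is strictly ABC-smaller than yours, and with your exchange the monotonized inequality does not come out positive in general. Your alternative of contracting $Ru$ is precisely the operation already tried in the proof of Lemma~\ref{lem:distance2fromR}; its failure when $u$ has a low-degree son is exactly what \emph{defines} the $U$-exceptional case you are trying to eliminate, so it cannot be reused here.

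A smaller point: one does not simply send $d_R\to\infty$. After the correct exchange the worst case for the $(d_R-1)$ sibling terms $\sum_i\bigl(f(d_R,d_{z_i})-f(d_R+1,d_{z_i})\bigr)$ occurs at the smallest admissible $d_{z_i}$, and Lemmas~\ref{lem:k_is_51} and~\ref{lem:7k+8} are what pin this common value at $d'=53$ once $d_R$ is large; the bounded range $d'\le d_u\le d_R\le 3272$ is handled by a direct computer check. The final reduction is to a function of $d_R$ (after a Taylor-expansion argument showing the bound is decreasing in $d_u$, so one may take $d_u=d_R$), not to a single-variable inequality in $d_u$ with $d_R$ sent to infinity.
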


\begin{proof}
Let $u$ be the root of a $U$-exceptional branch and let $w$ be the child of $u$ with the highest degree ($d_w\geq 6$). Let $x_l$ ($l=1,\dots,m$) be the other children of  $u$ with $d_{x_l}\geq 6$ and let $y_j$ be children  of $u$ with $d_{y_j}\leq 5$ ($j=1, \ldots, d_u-m-2$). Consider the change outlined in Figure \ref{U_exceptional}.

\begin{figure}[htb]
    \centering
    \includegraphics[width=0.82\textwidth]{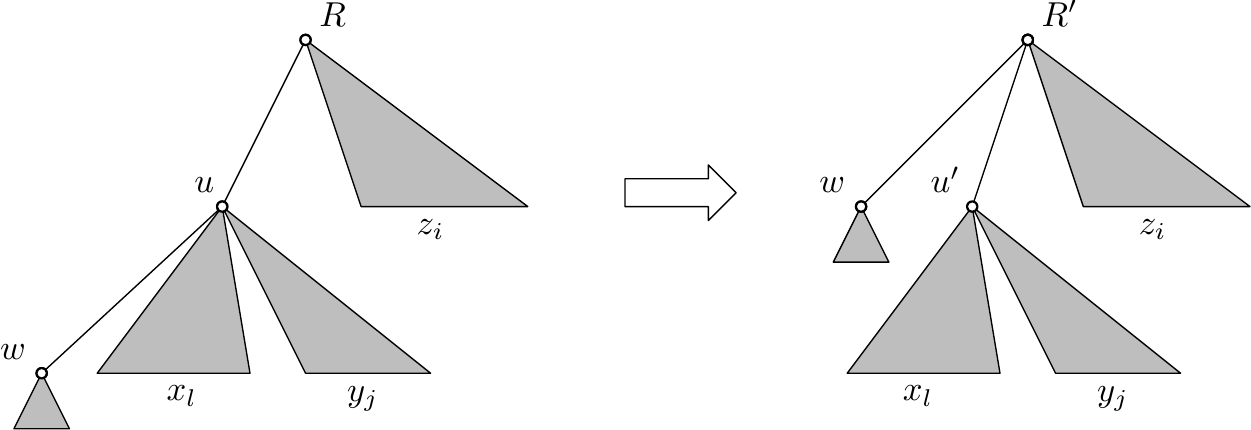}
    \caption{Suggested change when there is a $U$-exceptional branch.}
    \label{U_exceptional}
\end{figure}

First note that since $d_{y_j}\leq 5$ and $d_u\geq d_{z_i}\geq d_{x_l}$, all children of $w$, of each $x_l$ and of each $z_i$ have degree at most 5 and since we have at most 4 vertices of degree 5 and 11 vertices of degree 3 and at most one $B$-exceptional branch, all but at most two subtrees of $x_l$ and at most two subtrees of $z_i$ are $C_k$-branches.
To see this, observe that vertices of degree 3 are roots of $B_2$ or $B_2^*$ branches and to have a vertex of degree 4 (which is not $B_3$, $B_3^*$ or $B_3^{**}$) we should use $B_2$ branches as children of a vertex of degree 4. Since we have at most 11 $B_2$ branches, all but at most 4 vertices of degree 4 are roots of $B_3$ branches (this can be improved to 1, see Lemma \ref{degree k}). Therefore at most two $z_i$ branch will contain vertices of degree 5 (that are not among $y_j$'s) and vertices of degree 4 that are not $B_3$ branches. Also at most two $x_l$ branches will contain any remaining $B_2$ branches.

The difference between the ABC-indices of trees in Figure \ref{U_exceptional} is:
\begin{eqnarray}
\Delta(T,T') & = &  f(d_R,d_u)+\sum_{i=1}^{d_R-1}f(d_R,d_{z_i})+f(d_u,d_w)+
\sum_{j=1}^{d_u-m-2}f(d_u,d_{y_j})+\sum_{l=1}^{m}f(d_u,d_{x_l}) \nonumber\\
&& -f(d_R+1,d_u-1)-\sum_{i=1}^{d_R-1}f(d_R+1,d_{z_i})-f(d_R+1,d_w) \label{eq:changeUexceptional}\\
&& -\sum_{j=1}^{d_u-m-2}f(d_u-1,d_{y_j})-\sum_{l=1}^{m}f(d_u-1,d_{x_l}). \nonumber
\end{eqnarray}
Using Proposition \ref{prop:2} we can see that this difference is increasing in $d_{z_i}$ and $d_w$ and decreasing in $d_{y_j}$ and $d_{x_l}$. In order to verify that the difference is positive, it suffices to prove it when $d_{y_j}=5$ and $d_{z_i}=d_{x_l}=d_w=:d'$ for $i=1,\ldots, d_R-1$, $j=1, \ldots,d_u-m-2$ and $l=1,\ldots, m$. We have:
\begin{eqnarray*}
\Delta(T,T') & \geq &  f(d_R,d_u)-f(d_R+1,d_u-1)+(d_R-1)\left( f(d_R,d')-f(d_R+1,d')\right)\\
&&+f(d_u,d')-f(d_R+1,d')+(d_u-2)\left(f(d_u,5)-f(d_u-1,5)\right)\\
&&+m\left( f(d_u,d') -f(d_u-1,d') - f(d_u,5)+f(d_u-1,5)\right).
\end{eqnarray*}

Note that the coefficient of $m$ is a negative number by Proposition \ref{prop:2} and therefore the right-hand-side of this inequality is decreasing in $m$. Therefore to consider the worst case, we will let $m=d_u-3$ (the highest possible value of $m$ by the definition of $U$-exceptional branches). We have:
\begin{eqnarray*}
\Delta(T,T') & \geq &  f(d_R,d_u)-f(d_R+1,d_u-1)+(d_R-1)\left( f(d_R,d')-f(d_R+1,d')\right)\\
&&+f(d_u,d')-f(d_R+1,d')+\left(f(d_u,5)-f(d_u-1,5)\right)\\
&&+(d_u-3)\left( f(d_u,d') -f(d_u-1,d')\right).
\end{eqnarray*}
Let $g$ be the value of the right-hand side of this inequality. Lemmas \ref{B_k_equal}, \ref{lem:7k+8} and \ref{lem:k_is_51} show that when $d_R\geq 3273$, then the $C_k$-branches adjacent to $R$ are $C_{52}$, with at most 364 exceptions of $C_{51}$ or $C_{53}$ (if we have $d_u\geq 3273$, then the same holds for the branches rooted at the vertices $x_l$). Note that if we have some vertices of degree 54, then  by Lemma \ref{lem:similar} they are among $z_i$ and since the right-side value in (\ref{eq:changeUexceptional}) is increasing in each $d_{z_i}$, the worst case occurs when we have no vertices of degree 54. Also if we have vertices of degree 52, then they are among $x_l$ and since $g$ is decreasing in terms of $d_{x_l}$, the worst case happens when we have no vertices of degree 52. 
Observe that even if $m$ or $d_u$ is small and we do not have enough $C_k$ branches as children of $u$, since the worst case for the degree of each $z_i$ is 53 and because $d_{x_l}\leq d_{z_i}$ and the equation is decreasing in terms of $d_{x_l}$, the worst case for the degree of each $x_l$ is 53.

A simple computer search shows that when $d' \leq d_u<d_R<3272$, $g$ is positive. So the only remaining case to be considered is when $d_R\geq 3272$ and therefore $d'=53$ (i.e. when each $x_l$ and each $z_i$ is the root of a $C_{52}$-branch). The Taylor expansion (in terms of $d_R$) of $\frac{\partial g}{\partial d_u}$ shows that for large values of $d_R$, $g$ is decreasing in $d_u$ and therefore it suffices to show that $g>0$ when $d_u=d_R$ (the highest possible value). By substituting $d_u$ with $d_R$, the function will only depend on $d_R$ and it is easy to check that $g>0$. By a simple computer verification we have also checked that $g>0$ for small values of $d_u\leq d_R\leq 10000$ and $d'=53$.

To summarize, in all cases we have $g>0$. This gives a contradiction and completes the proof.
\end{proof}

\begin{corollary}\label{col:4d5}
In every ABC-extremal tree the vertices of degree one are at distance at most 5 from the root.
\end{corollary}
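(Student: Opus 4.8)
The plan is to combine Theorem~\ref{main} with the strict-decrease Corollary~\ref{cor:degrees decrease} to obtain a crude distance bound, and then to eliminate the single extremal configuration that would push a leaf out to distance $6$. Throughout I may assume $n>1100$, since all ABC-minimal trees of order at most $1100$ are known \cite{comp1100} and visibly satisfy the claim; in particular $n\ge 415$, so by Lemma~\ref{lem:2-2edge} there are no $2$-$2$ edges. Let $\ell$ be a vertex of degree~$1$ and let $R=v_0,v_1,\dots,v_h=\ell$ be the path from the root to $\ell$. By Theorem~\ref{thm:1.1} the parent $v_{h-1}$ has degree~$2$, and since there is no $2$-$2$ edge its parent $v_{h-2}$ has degree at least~$3$.

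First I would record a degree-versus-distance inequality. By Theorem~\ref{main} every vertex at distance at least~$2$ has degree at most~$5$, and by Corollary~\ref{cor:degrees decrease} the degrees strictly decrease along $v_2,v_3,\dots$ (the only permitted non-strict steps are a $2$-$2$ step, excluded here, and the possible pair $v_0,v_1$ of degree $\Delta$ at the very top, which lies above $v_2$). Hence a vertex of degree $\delta$ at distance $j\ge 2$ satisfies $d_{v_2}>\cdots>d_{v_j}$ with $d_{v_2}\le 5$, so $\delta\le 7-j$, that is, $j\le 7-\delta$. Applying this to the degree-$2$ vertex $v_{h-1}$ (when $h-1\ge 2$) gives $h\le 6$, so the only remaining task is to exclude $h=6$.

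If $h=6$, then $v_5$ has degree~$2$ at distance~$5$ and $v_4$ has degree at least~$3$; but $j\le 7-\delta$ forces $d_{v_4}\le 3$, hence $d_{v_4}=3$, and likewise $d_{v_3}=4$, $d_{v_2}=5$, $d_{v_1}\ge 6$. By Theorem~\ref{thm:2.1} a degree-$3$ vertex is the root of a $B_2$-branch (there is no $B_2^*$, as that would contain a $2$-$2$ edge), so $v_4$ roots a $B_2$. The crucial observation is that $v_3$ has degree~$4$ and a son $v_4$ of degree~$3$, whereas a plain $B_3$-branch has a degree-$4$ root all of whose sons have degree~$2$; comparing in the order $\succ$ therefore gives $T_{v_3}\succ B_3$. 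I would then invoke property (P1$'$): every vertex $w$ at distance~$2$ has $h(w)=2<3=h(v_3)$, so $T_w\succeq T_{v_3}\succ B_3$. Thus under the assumption $h=6$ no vertex at distance~$2$ can be a plain $B_3$-branch, nor indeed anything $\preceq B_3$ (in particular no distance-$2$ vertex has degree $2$ or~$3$).

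To finish, I would convert this into a bound on $n$. Each vertex at distance~$2$ then has degree~$4$ with a son of degree~$3$, or degree~$5$ (degrees $\ge 6$ are excluded by Theorem~\ref{main}). By Lemma~\ref{lem:d5} there are at most four vertices of degree~$5$ and at most eleven of degree~$3$, and distinct degree-$4$ distance-$2$ vertices use disjoint degree-$3$ sons; hence there are only $O(1)$ vertices at distance~$2$, each rooting a subtree of bounded size (degrees decrease from at most~$5$). Since every distance-$1$ vertex has a son at distance~$2$, this forces $n$ below an explicit constant inside the range $n\le 1100$, contradicting $n>1100$. The same reasoning disposes of the variant in which $v_4$ is the internal degree-$3$ vertex of a $B_3^{**}$-branch rooted at $v_3$, because that configuration again produces a degree-$3$ vertex at distance~$4$ and makes $T_{v_3}\succ B_3$. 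I expect the last step to be the main obstacle: (P1$'$) by itself only shows that every distance-$2$ subtree dominates $T_{v_3}$, and one must use the scarcity of low-degree vertices from Lemma~\ref{lem:d5} to upgrade ``no plain $B_3$-branch at distance~$2$'' into a genuine bound on $n$, after which the residual small trees are settled by the known classification.
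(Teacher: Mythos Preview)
Your argument is correct, but it takes a longer detour than the paper's. Both routes reduce (after disposing of $2$-$2$ edges) to eliminating the single pattern $5$--$4$--$3$--$2$--$1$ at distances $2$ through $6$, and both ultimately rest on Lemma~\ref{lem:d5} and (P1$'$). The difference is in how they are combined. The paper first proves that $h_2=\min\{h(v):d_v=2\}\le 3$: using Lemma~\ref{lem:d5} one observes that among the vertices at distance~$2$ there must be one whose path to a leaf avoids every degree-$3$ and degree-$5$ vertex, hence hits a degree-$2$ vertex already at height~$\le 3$. Then (P1$'$) gives an immediate contradiction, because a degree-$2$ vertex at height~$\le 3$ cannot satisfy $T_u\succeq T_{v_4}$ with $d_{v_4}=3$ at height~$4$. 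This also lets the paper handle the $2$-$2$ sequences uniformly via (P2), without invoking the computer classification for $n\le 1100$.

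You instead push (P1$'$) upward: every distance-$2$ subtree must dominate $T_{v_3}\succ B_3$, so each distance-$2$ vertex has degree~$4$ with a degree-$3$ son, or degree~$5$. The count then works as you outline---at most $11$ of the former and $4$ of the latter, so at most $15$ distance-$2$ vertices; since each son of $R$ has degree $\ge 3$ this forces $d_R\le 7$, and with subtree sizes bounded by $S(5)=65$ and $S(4)=16$ one gets $n\le 1+7+4\cdot 65+11\cdot 16<500$, well inside the classified range. So your ``main obstacle'' is not one: the bound is routine. The trade-off is that your version is a little heavier (it leans on the $n\le 1100$ classification twice, and on Lemma~\ref{lem:deg2root} to ensure $d_z\ge 3$ for sons of $R$), whereas the paper's version finds a single contradictory vertex and is done.
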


\begin{proof}
We will use the main results proved above. First of all, every path from the root has decreasing degrees with possible exception of the first edge on the path, or when the path contains a 2-2 edge. Recall that there is at most one 2-2 edge in $T$ (Lemma \ref{lem:2-2edge}).  By Theorem \ref{main}, we may assume that vertices at distance 2 from the root have degree 5 or less.
Suppose that a path starting at the root has length 6 or more. Consider degrees of vertices on the path starting at the vertex at distance 2 from $R$. The only possible cases are the following degree sequences: 4-3-2-2-1, 5-4-3-2-2-1, 5-4-3-2-1, 5-4-2-2-1, 5-3-2-2-1. 


We will first prove that $h_2 = \min \{h(v)\mid d_v=2 \} \le 3$, i.e. there is a degree-2 vertex at distance at most 3 from $R$. If there is no vertex of degree 5 at distance 2 from the root (a bad 5-vertex), this is easy to see (similar arguments as in the more complicated case below), and we omit details. So let us suppose that there is a bad 5-vertex $v$. By (P1), all sons of the root have degree at least 5 in this case. Since there are at most 4 vertices of degree 5 (Lemma \ref{lem:d5}), it follows that there are more than 11 vertices at distance 2 from the root that are not of degree 5. Thus one of them leads to a leaf without including a vertex of degree 3 (Lemma \ref{lem:d5}). The predecessor of this leaf is a degree-2 vertex at distance at most 3 from $R$. Therefore $h_2\leq 3$.

By (P2), if there is a 2-2 edge in the ABC-minimal tree, then the height of one of its vertices should be $h_2$, which is a contradiction to the existence of 4-3-2-2-1, 5-4-3-2-2-1, 5-4-2-2-1 and 5-3-2-2-1.

Now observe that since $h_2\leq 3$, then  5-4-3-2-1 cannot also happen because it has a vertex of degree 3 at distant 4 from the root, which is a contradiction. 
\end{proof}

In fact, distance 5 from the root in the last corollary can be reduced to 4 (if there are no $B_2^*$, $B_3^*$ and $B_3^{**}$).
As this is not important for the main structure results, we do not intend to deal with this improvement here.

\subsection{Vertices of intermediate degree}

\begin{lemma}\label{degree k}
Suppose that $T$ is an ABC-minimal tree.
\begin{itemize}
\item[{\rm (a)}] The only non-root vertices of degrees 3, 4 or 5  are roots of $B_2$, $B_2^*$, $B_3$, $B_3^*$, $B_3^{**}$ or $B_4$ branches.
\item[{\rm (b)}] There are no non-root vertices of degree $k$ for $6\leq k \leq 15$.
\end{itemize}
\end{lemma}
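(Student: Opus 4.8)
The plan is to handle the two parts of Lemma~\ref{degree k} separately, with part~(a) serving as the foundation for part~(b). For part~(a), I would first invoke Theorem~\ref{main}, which guarantees that every non-root vertex of degree at least~$6$ is adjacent to the root; hence every vertex at distance~$2$ or more from $R$ has degree at most~$5$. Consequently, any non-root vertex $v$ of degree $3$, $4$, or $5$ has all of its descendants of degree at most~$5$, and by Corollary~\ref{cor:degrees decrease} the degrees strictly decrease along every path leaving the root, so the sons of $v$ have degree strictly less than $d_v$. The key observation is that $T_v$ is then a small tree whose internal structure is tightly constrained: combining Theorem~\ref{thm:1.1} (degree-$1$ vertices sit below degree-$2$ vertices) with Corollary~\ref{cor:endingbranches}, any such $v$ that has a son of degree~$2$ is already forced to be the root of one of $B_2$, $B_2^*$, $B_3$, $B_3^*$, $B_3^{**}$, or $B_4$. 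The remaining worry is a vertex $v$ of degree $4$ or $5$ all of whose sons have degree $\ge3$; but by Lemma~\ref{lem:d5} there are at most eleven vertices of degree~$3$ and at most four of degree~$5$, so such configurations are numerically scarce and can be eliminated by an exchange argument of the type used in Lemma~\ref{lem:d5} (detaching a high-degree son and reattaching it closer to the root, or regrouping sons via similarity exchanges to produce a forbidden count of $B_1^-$-branches). I expect part~(a) to follow by assembling these already-proved facts rather than by new computation.

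For part~(b), the goal is to rule out non-root vertices of degree $k$ with $6\le k\le15$. By Theorem~\ref{main}, such a vertex $v$ must be a son of the root, so $v$ is a $U$-exceptional-type candidate; but Lemma~\ref{lem:noU-exceptional} already forbids $U$-exceptional branches, i.e.\ a son of the root cannot itself have a son of degree~$\ge6$. Thus a degree-$k$ son $v$ of the root (for $k$ in this range) must have \emph{all} its sons of degree at most~$5$, and by part~(a) each such son is the root of a $B_2$, $B_2^*$, $B_3$, $B_3^*$, $B_3^{**}$, or $B_4$ branch, or possibly a $B_5$-branch (the degree-$6$ case via $d_{\mathrm{son}}\le5$). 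In other words, $T_v$ is a $C$-branch-like object assembled out of $B$-branches. The heart of the argument is then to show that a son of the root of such intermediate degree can always be improved: I would perform an exchange that either contracts the edge $Rv$ (raising $d_R$, as in Lemma~\ref{lem:distance2fromR}) or splits $T_v$ into two smaller $C$-branches / regroups its $B_3$-branches, and then verify $\Delta(T,T')>0$.

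The main obstacle will be the explicit sign verification of $\Delta(T,T')$ across the full window $6\le k\le15$ together with the free parameter $d_R$. Since $v$'s subtree is built almost entirely of $B_3$-branches (all but the $O(1)$ exceptional $B_2$, $B_4$, $B_5$ branches counted in Lemmas~\ref{lem:d5} and~\ref{no_b_exception}), the dominant contribution comes from the $f(k+1,4)$ edges, and the calculation closely parallels those in Lemmas~\ref{lem:7k+8} and~\ref{lem:k_is_51}: the worst case is isolated by using Proposition~\ref{prop:2} to push the neighbor degrees $d_{z_i}$ to their extreme values (taking $d_R\to\infty$ or $d_R=d_u$ as appropriate, and the grandchildren degrees to $4$), after which $\Delta(T,T')$ depends on the single variable $d_R$ for each fixed $k$ and can be checked by computer. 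The delicate point is that for $k$ just below~$6$ versus the true $C$-branches (whose roots have degree around $52$) there is an intermediate regime where neither ``contract the edge'' nor ``split into two'' is obviously optimal, so I would likely need two separate exchange schemes --- one favoring small $d_R$ and one favoring large $d_R$ --- and confirm that their ranges of positivity overlap to cover all admissible $d_R$. Establishing that overlap, rather than any single inequality, is where the real care lies.
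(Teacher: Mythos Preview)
Your plan for part~(a) is essentially the paper's: use Corollaries~\ref{cor:degrees decrease} and~\ref{cor:endingbranches} to pin down the children, and then dispatch the leftover case (a degree-$4$ or degree-$5$ vertex whose sons all have degree~$\ge3$) by an explicit exchange. The paper carries out that exchange concretely --- e.g.\ for $d_v=4$ with three $B_2$-children it replaces the $16$-vertex $T_v$ by $B_3+B_4$ attached to the parent --- rather than by the similarity-regrouping heuristic you sketch, but the spirit is the same.

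For part~(b), however, the contraction of $Rv$ (the move from Lemma~\ref{lem:distance2fromR}) is the wrong tool. That computation is favourable precisely because the sons $y_i$ of the contracted vertex have degree at least~$6$: the gain comes from the terms $f(d_u,d_{y_i})-f(d_R+d_u-2,d_{y_i})$, which by Proposition~\ref{prop:2} are increasing in $d_{y_i}$. In your situation all $d_{y_i}\le5$, and for small-degree children the inequality can reverse. If $d_v=6$ and all five sons have degree~$2$ (so $T_v=B_5$), then $f(\cdot,2)$ is constant and the contraction gives $\Delta\approx f(d_R,6)-f(2,2)<0$ for large $d_R$; for $d_v=7$ with six $B_2$-sons one likewise gets $\Delta<0$. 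So contraction does not cover the window, and your alternative ``split $T_v$ into two smaller $C$-branches'' is not meaningful when $k\le15$.

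What the paper does instead is to replace $T_v$ wholesale by a bouquet of $B_3$-branches (plus one correction branch determined by $|T_v|\bmod 7$) attached to the parent of $v$ --- exactly the seven-case scheme from the proof of Lemma~\ref{no_b_exception}. This raises the parent's degree, so Proposition~\ref{prop:2} reduces the worst case to a single-variable check in $d_R$, verified by computer for each $k$ and each admissible tuple $(k_2,k_2^*,k_3,k_3^*,k_3^{**},k_4,k_5)$. The case $k=6$ genuinely requires separate treatment: the replacement argument only shows that a degree-$6$ vertex must be a $B_5$; one then uses the already-established $k=7,\dots,15$ cases to force the father's degree~$\ge16$ before eliminating $B_5$ by shifting a $B_1^-$ to a nearby $B_2$ or $B_3$. (Your aside that a son of $v$ might be a $B_5$-branch is also off: the root of $B_5$ has degree~$6$, which Theorem~\ref{main} forbids at distance~$2$ from $R$.)
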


\begin{proof}
First note that ABC-minimal trees of order $\leq 1100$ have been determined \cite{Dim13} and this lemma holds for all of them. So we can assume that $n\geq 1100$ and $d_R\geq 5$. We will use Corollary \ref{cor:endingbranches} throughout.

Let $v$ be a non-root vertex of degree $k$. If $k=3$ then $T_v$ is either $B_2$ or $B_2^*$. If $k=4$ and $T_v$ is not $B_3$, $B_3^*$ or $B_3^{**}$, then all children of $v$ are $B_2$ or $B_2^*$ branches ($T_v$ will have 16 or 17 vertices). It is easy to check that by exchanging $T_v$ with one $B_3$ and one $B_4$ (16 vertices) or one $B_3$ and two $B_2$ branches (17 vertices) will improve the ABC-index. 
If $k=5$ and $T_v$ is not a $B_4$, by Lemmas \ref{no_b_exception} and \ref{B_k branches are small} all children of $v$ have degrees 3 or 4. So in this case all subtrees of $T_v$ are $B_2$, $B_2^*$, $B_3$, $B_3^*$ or $B_3^{**}$.  Let $k_2$, $k_2^*$, $k_3$, $k_3^*$ and $k_3^{**}$ represent the number of each of these branches, respectively. Therefore, $k_2+k_2^*+k_3+k_3^*+k_3^{**}=4$. From previous arguments we know that some of these branches cannot occur at the same time and also we know that $k_2^*+ k_3^*+k_3^{**}\leq 1$. Let $n_v = 1+ 5k_2+6k_2^*+7k_3+8k_3^*+10k_3^{**}$ be the number of vertices in $T_v$ ($21\leq n_v\leq32$). We will replace $T_v$ by the subtrees used in the proof of Lemma \ref{no_b_exception} (treating seven different cases of the value of $k$ modulo 7). As an example, when $n_v=21$ ($n_v\equiv 0$ mod 7) we will replace $T_v$ by 3 copies of $B_3$. Note that there are several cases but in all of them the degree of the parent of $v$, say $R$, increases and  therefore the difference between ABC-indices of these two trees (in the worst case) only depends on $d_R$. It is easy to check by computer that the difference is positive for all values of $d_R\geq 5$. This completes the proof of part (a).

When $k=6$, similar arguments as in part (a) can be used to show that  every vertex of degree 6 is a root of $B_5$. We will prove later that $B_5$ can be excluded as well.

For $k\geq 7$, we give a proof iteratively starting with $k=7, 8,$ etc. Then we may assume that children of $v$ have degrees 3, 4, 5 or 6 and all subtrees of $T_v$ are $B_2$, $B_2^*$, $B_3$, $B_3^*$, $B_3^{**}$, $B_4$ or $B_5$ branches. Let $k_2$, $k_2^*$, $k_3$, $k_3^*$, $k_3^{**}$, $k_4$ and $k_5$ represent the number of each of these branches, respectively. Therefore, $k_2+k_2^*+k_3+k_3^*+k_3^{**}+k_4+k_5=k-1$. We also know that some of these branches cannot occur at the same time and also we know that $k_2\leq 11, k_2^* +  k_3^*+ k_3^{**}\leq 1$ and $k_4\leq 4$. Let $n_v = 1+ 5k_2+6k_2^*+7k_3+8k_3^*+10k_3^{**}+9k_4+11k_5$ be the number of vertices in $T_v$. Again, we will replace $T_v$ by the subtrees used in the proof of Lemma \ref{no_b_exception} (with seven different cases for the value of $k$ modulo 7). The difference between ABC-indecies of these two trees (in the worst case) only depends on $d_R$ and it is easy to check\footnote{This was checked by computer. The program and its output are available from the authors.} that the difference is positive for all values of $d_R\geq k$  which completes the proof of part (b) when $k\neq6$.

Let $v$ be the root of a $B_5$ branch and let $w$ be its father. By the above, $d_w\geq16$. If there is at least one $B_2$ or $B_3$ attached to $w$, Lemma \ref{B_k_equal} applies and we are done. Since we can have up to 4 vertices of degree 5 (and $v$ has more than 4 siblings), there is a sibling $u$ of $v$ of degree at least 16 (by the previous paragraph). It is easy to see that there is a $B_2$ (or $B_3$) as a child of $u$. Now apply the following change: detach a $B_1^-$ from $B_5$ (thus changing it to $B_4$) and attach it to this $B_2$ (or $B_3$). The change of ABC-index is increasing in $d_u$ and decreasing in $d_w$. So the worst case is when the degree of $w$ goes to infinity and $d_u=16$ for which it is easy to check that the change of the index is positive. This contradiction shows that there is no $B_5$ and thus completes the proof.
\end{proof}

\begin{corollary}
In any ABC-extermal tree, all $B_4$ branches (if any) are attached to the root.
\end{corollary}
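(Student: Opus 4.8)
The plan is to argue by contradiction: suppose some $B_4$-branch has root $v$ that is \emph{not} a son of $R$. Since $v$ is the root of a $B_4$-branch, $d_v=5$. Let $w$ be the father of $v$; as $v$ is not adjacent to $R$, the vertex $w$ is a non-root vertex, and the first task is to pin down its degree. By Corollary \ref{cor:degrees decrease} the degrees strictly decrease along the path from $R$ through $w$ to $v$, so $d_w>d_v=5$, i.e.\ $d_w\ge 6$. Lemma \ref{degree k}(b) forbids non-root vertices of degree between $6$ and $15$, whence $d_w\ge 16$. Finally, Theorem \ref{main} says every vertex of degree at least $6$ is the root or a neighbour of the root, so $w$ must be a son of $R$ and $v$ lies at distance exactly $2$ from $R$. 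In particular $d_w\le d_R$, since $R$ has maximum degree.

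Next I would record the local structure at $w$, which is needed to control the computation. The sons of $w$ are at distance $2$ from $R$, hence have degree at most $5$ by Theorem \ref{main}; by Lemma \ref{degree k}(a) each such son of degree $\ge 3$ is the root of a $B_2,B_2^*,B_3,B_3^*,B_3^{**}$ or $B_4$-branch. Since $w$ already has the $B_4$-son $v$, Lemma \ref{B_k_equal} (together with the non-existence of $B_5$ from Lemma \ref{degree k} and the uniqueness of an exceptional branch from Corollary \ref{cor:endingbranches}) forces all but at most one son of $w$ to be a $B_3$- or $B_4$-branch, so the degrees of the sons of $w$ all lie in $\{4,5\}$.

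The decisive step is to relocate the whole branch $T_v$ from $w$ to $R$, obtaining a tree $T'$ in which $d_w$ has decreased by $1$ and $d_R$ has increased by $1$. Writing $D=d_R$, $d=d_w$, and letting $z_i$ and $s_j$ range over the remaining sons of $R$ and of $w$, the changed edges give
\begin{align*}
\Delta(T,T') = {}& f(D,d)-f(D+1,d-1)+f(d,5)-f(D+1,5) \\
& + \sum_i\bigl(f(D,d_{z_i})-f(D+1,d_{z_i})\bigr) + \sum_j\bigl(f(d,d_{s_j})-f(d-1,d_{s_j})\bigr).
\end{align*}
By Proposition \ref{prop:2} the first sum is nonnegative and increasing in each $d_{z_i}$, while the second sum is nonpositive; replacing the free degrees by their worst admissible values (smallest for the $z_i$, using $d_{z_i}\ge 3$ from Lemma \ref{lem:deg2root}, and the value in $\{4,5\}$ maximising the deficit for the $s_j$, with $d\le D$) reduces $\Delta(T,T')$ to a function of the two variables $D$ and $d$ only. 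I expect this to be the main obstacle: the two sums nearly cancel when $D\approx d$, so one must show that the strictly positive term $f(d,5)-f(D+1,5)$ (positive because $d\le D$) together with the $D-d$ extra terms of the first sum outweighs the deficit. As in Lemmas \ref{lem:7k+8} and \ref{lem:k_is_51}, I would settle the finitely many moderate values of $(d,D)$ by a direct computer check and the range of large $D$ by a Taylor expansion in $D$ (after factoring out the leading power), verifying $\Delta(T,T')>0$ whenever $D>d$.

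Finally I would dispose of the boundary case $d_w=d_R$. Then $w$ is a second vertex of maximum degree, and by the corollary following Theorem \ref{thm:2.1} it is adjacent to $R$; since ABC-extremal trees are rooted at the maximum-degree vertex yielding the $\succ$-largest tree, re-rooting at $w$ (a similarity operation preserving the ABC-index) makes $v$ a son of the root, and by (P1) the $\succ$-largest representative keeps the heavier $B_4$-branch incident with the root. In either case the assumed $B_4$-branch at distance $2$ contradicts extremality, which proves that every $B_4$-branch is attached to $R$.
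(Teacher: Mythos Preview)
Your overall strategy---locate the parent $w$ of the $B_4$-root $v$, conclude $d_w\ge 16$ and that $w$ is a son of $R$, then detach $T_v$ and reattach it at $R$---is the same as the paper's. The paper, however, inserts one step that you skip, and your argument breaks exactly where that step is needed.

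The gap is in your worst-case estimate. You take $d_{z_i}\ge 3$ (from Lemma~\ref{lem:deg2root}) and you let every $s_j$ take the value in $\{4,5\}$ that maximises the deficit, i.e.\ $s_j=5$. With those choices the lower bound on $\Delta(T,T')$ is \emph{negative}. Concretely, for $d=16$ and $D\to\infty$ one gets
\[
\Delta(T,T')\ \ge\ \underbrace{(D-1)\bigl(f(D,3)-f(D+1,3)\bigr)}_{\to\,0}
+\underbrace{14\bigl(f(16,5)-f(15,5)\bigr)}_{\approx -0.036}
+\underbrace{f(16,5)-\tfrac{1}{\sqrt5}}_{\approx 0.040}
+\underbrace{\tfrac14-\tfrac{1}{\sqrt{15}}}_{\approx -0.008}\ \approx\ -0.004,
\]
and a similar computation with $d=D-1$ and $D$ large also gives a negative bound (the first sum contributes about $\tfrac{1}{2\sqrt3\,D}$ while the second contributes about $-\tfrac{3}{2\sqrt5\,D}$, and the remaining terms are $O(D^{-2})$). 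So neither a finite computer check nor a Taylor expansion in $D$ will rescue the inequality with the bounds you have written down; the ``$D-d$ extra terms'' do not compensate because each first-sum term (with $d_{z_i}=3$) is genuinely smaller than each second-sum term (with $s_j=5$).

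Two sharper inputs are available and are what make the relocation work: by (P1) every son $z_i$ of $R$ satisfies $T_{z_i}\succeq T_v$, hence $d_{z_i}\ge 5$ (not just $\ge3$); and by Lemma~\ref{lem:d5} at most four vertices of degree $5$ exist, so at most three of the $s_j$ can equal $5$ and the rest equal $4$. With these two constraints the leading terms no longer cancel badly and the lower bound becomes positive. The paper sidesteps this bookkeeping altogether: it first uses a splitting argument in the style of Lemma~\ref{lem:C_k} to cap $d_w\le 107$ whenever $d_R\ge 952$, and only then performs the relocation (moving all the $B_4$'s at once). With $d_w$ bounded while $d_R$ grows, the problematic regime $d\approx D$ disappears and the verification reduces to a finite check in $d_w$.

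Your treatment of the boundary case $d_w=d_R$ is also not quite complete: re-rooting at $w$ does make $v$ a son of the root, but you still need to argue that this rooting is $\succ$-larger than the original one (it is, but it requires comparing the subtree sequences, not just invoking (P1)).
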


\begin{proof}
Let $u$ be a non-root vertex which has 1,2,3 or 4 $B_4$ branches as its children. By Lemma \ref{degree k}, the degree of $u$ is at least 16 and by Theorem \ref{main}, it is adjacent to the root. We will first prove an upper bound on the degree of $u$ when the degree of the root is large enough. Consider a change similar to Figure \ref{fig:C_k}  in Lemma \ref{lem:C_k} (we have 1, 2, 3 or 4 copies of $B_4$ as well). One can check that in all of these four cases, degree of $u$ is bounded above by 107 when degree of the root is at least 952. 

Now that we have the desired upper bound, consider the following change. Detach all $B_4$ branches from $u$ and attach them to the root. Degree of the root increases and therefore the difference in the ABC-index is increasing in the degree of children of $R$ (except $u$). Therefore for the worst case we can only consider their smallest possible degree (some of them can have degree 5 and the rest will have degree 4). A simple computer search shows that in different cases of small values of $d_R\leq 951$ (and all $d_u\leq d_R$) the suggested change improves the ABC index. Hence we can assume that $d_R\geq 952$  and therefore $d_u\leq 107$. Now for every fixed value of $d_u$ the worst case of the equation only depends of $d_R$ and it is easy to check that this change decreases the ABC-index.
\end{proof}

\begin{corollary}\label{cor:all but one}
Any ABC-minimal tree is similar to a tree where at most one non-root vertex of degree $\geq 6$ exists that is not a root of a $C_k$-branch.
\end{corollary}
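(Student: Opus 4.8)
The plan is to pass to a convenient similar tree and then confine all ``defects'' to a single branch. Since similarity preserves the ABC-index and the statement only asks for a \emph{similar} tree, I would start from an ABC-minimal (indeed ABC-extremal) tree $T$. Trees of order at most $1100$ are classified (\cite{comp1100,Dim13}) and can be checked directly, so I would assume $n>1100$; in particular $n\ge 415$, so by Lemma~\ref{lem:2-2edge} there are no $2$-$2$ edges to worry about. By Theorem~\ref{main} every non-root vertex of degree $\ge 6$ is adjacent to the root, and by Lemma~\ref{degree k}(b) it actually has degree $\ge 16$. The whole task is therefore to understand the children of these high-degree root-neighbours.

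First I would pin down the possible children of a non-root vertex $u$ with $d_u\ge 16$. By Lemma~\ref{lem:noU-exceptional}, $u$ is not $U$-exceptional, so every child of $u$ has degree $\le 5$. A child of degree $1$ is impossible by Theorem~\ref{thm:1.1}, and (since there are no $2$-$2$ edges) a child of degree $2$ would force $u\in B(T)$, making $T_u$ one of the small branches of Corollary~\ref{cor:endingbranches}, contradicting $d_u\ge 16$. By Lemma~\ref{degree k}(a) a child of degree $3,4,5$ is the root of a $B_2,B_2^*,B_3,B_3^*,B_3^{**}$ or $B_4$ branch; $B_4$ is excluded because all $B_4$-branches sit at the root (the preceding corollary), and $B_5$ is excluded by Lemma~\ref{degree k}. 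Hence every child of $u$ is the root of a $B_2$- or $B_3$-branch, or of the unique exceptional branch. Consequently $T_u$ is a $C_k$-branch precisely when all children of $u$ are $B_3$-roots, i.e.\ iff $u$ has no ``bad'' child. The bad branches are few: at most $11$ of them are $B_2$ (Lemma~\ref{lem:d5}) and at most one is exceptional, so there are at most $12$ bad branches in all of $T$.

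The heart of the argument is to show these bad branches can be forced under one host. Let $d_{\min}$ be the smallest degree among non-root vertices of degree $\ge 16$ (if there are none, the statement is vacuous). The key observation is that a branch of root-degree $3$ (a $B_2$ or $B_2^*$) prefers a small-degree parent: if such a branch hung under a parent of degree $b>d_{\min}$ while some degree-$d_{\min}$ vertex carried a $B_3$ (root-degree $4$), then exchanging these two disjoint subtrees would \emph{strictly} decrease the ABC-index by Lemma~\ref{lem:similar}(a) (larger parent with smaller son, smaller parent with larger son), contradicting minimality of $T$. Thus, if any $B_2$ or $B_2^*$ were attached to a vertex of degree $>d_{\min}$ (including the root), every degree-$d_{\min}$ vertex would have to be free of $B_3$-children; but such a vertex has $d_{\min}-1\ge 15$ children, all forced to be bad, which is impossible with only $\le 12$ bad branches. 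Hence every $B_2$ and the possible $B_2^*$ is a child of a degree-$d_{\min}$ vertex.

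Finally I would consolidate by similarity. Among the degree-$d_{\min}$ vertices, part-(b) exchanges with equal parent degree (Lemma~\ref{lem:similar}(b)) let me move all the degree-$3$ bad branches under a single degree-$d_{\min}$ vertex $u^*$, each time swapping a bad branch for one of the spare $B_3$-branches of $u^*$ (there are at least $d_{\min}-1-12\ge 3$ of them). An exceptional branch of root-degree $4$ ($B_3^*$ or $B_3^{**}$) can be swapped directly onto $u^*$ against one of its $B_3$-children, since the two subtree-roots then have equal degree, which is again a similarity exchange. After these moves every non-root vertex of degree $\ge 6$ other than $u^*$ has only $B_3$-children and is therefore the root of a $C_k$-branch, so $u^*$ is the only possible exception. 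The main obstacle, and the step that genuinely uses minimality rather than mere similarity, is the third paragraph: because a similarity exchange can never move a $B_2$ between parents of different degrees, one must first prove — via Lemma~\ref{lem:similar}(a) together with the counting bound $12<d_{\min}-1$ — that all bad branches already live at the single degree $d_{\min}$.
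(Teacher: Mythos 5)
Your proof is correct and follows essentially the same route as the paper: reduce via Theorem \ref{main} and Lemma \ref{degree k} to sons of the root of degree $\ge 16$, observe their children can only be roots of $B_2$, $B_2^*$, $B_3$, $B_3^*$ or $B_3^{**}$, and then use similarity exchanges to gather all non-$B_3$ children under a single vertex. The only difference is presentational: where the paper invokes (P1)--(P3) to assert the defects already sit in one subtree, you make this explicit via Lemma \ref{lem:similar}(a) together with the counting bound $12<d_{\min}-1$, which is a welcome clarification rather than a new idea.
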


\begin{proof}
We may assume $T$ is an ABC-extremal tree. Let $u$  be a non-root vertex which is not the root of a $C_k$ branch and has $d_u\geq6$. Since there are no such vertices of degree $6,\ldots,15$ we have $d_u\geq 16$ and therefore $u$ is a son of the root. 
Since $d_u\geq16$, $u$ has some $B_3$ branches as subtrees (see Corollary \ref{cor:endingbranches}) and by (P1), (P2) and (P3), either $T_u$ contains all $B_2$ branches (and possibly a $B_2^*$) or $T_u$ contains $B_3^*$ or $B_3^{**}$ branches. If there is a $B_2$ (or $B_2^*$) and also one $B_3^*$ or $B_3^{**}$ in the tree, then by a similarity exchange we can move them and make all of them sons of $u$. Therefore, all other vertices at distance two from the root will be $B_3$ branches and all children of the root (with degree at least 6) except $u$ are $C_k$ branches.
\end{proof}

\begin{corollary}\label{cor:new}
In any ABC-minimal tree of order at least $111$, the degree of the root is at least $16$.
\end{corollary}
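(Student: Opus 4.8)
The plan is to prove the contrapositive in a quantitative form: if an ABC-minimal tree $T$ has root $R$ with $d_R\le 15$, then $T$ has only a bounded number of vertices, far fewer than would be needed once the intermediate range is disposed of by hand. The starting observation is that $R$ is a vertex of maximum degree, so $d_R\le 15$ forces \emph{every} vertex of $T$ to have degree at most $15$. Lemma \ref{degree k}(b) then removes the degrees $6,\dots,15$ from all non-root vertices, leaving every non-root vertex with degree in $\{1,2,3,4,5\}$. In particular $T$ has no $C_k$-branch, since the root of such a branch would be a non-root vertex of degree $k+1\ge 6$.

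The second step is to bound the subtree $T_v$ of each son $v$ of $R$. Since $d_v\le d_R\le 15$ and $d_v\notin\{6,\dots,15\}$, we have $d_v\le 5$. If $d_v\ge 3$, then Lemma \ref{degree k}(a) forces $T_v$ to be one of $B_2,B_2^*,B_3,B_3^*,B_3^{**},B_4$, each of which has at most $10$ vertices; if $d_v\le 2$, then by the strict decrease of degrees along paths from the root (Corollary \ref{cor:degrees decrease}) together with Corollary \ref{col:4d5}, the subtree $T_v$ is a path on at most $5$ vertices. Hence $|T_v|\le 10$ in every case, and since $R$ has at most $15$ sons,
\[
 n \;=\; 1+\sum_{v}|T_v| \;\le\; 1+15\cdot 10 \;=\; 151 .
\]

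Finally I would combine this estimate with the known classification. The inequality $n\le 151$ shows immediately that every ABC-minimal tree with $n\ge 152$ (in particular every tree with $n>1100$) must have $d_R\ge 16$. The remaining window $111\le n\le 151$ lies inside the range $n\le 1100$ for which all ABC-minimal trees are explicitly catalogued in \cite{Dim13,comp1100}, and inspection of that list confirms $d_R\ge 16$ for $n\ge 111$. The only genuinely delicate point is the second step: one must be certain that the degree ceiling really eliminates every large branch hanging off the root, and this is exactly what Lemma \ref{degree k} guarantees. Once it is in hand, the corollary reduces to the displayed arithmetic together with a finite table lookup, so I expect no serious obstacle beyond correctly invoking the degree restriction.
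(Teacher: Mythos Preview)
Your argument is correct and follows essentially the same route as the paper: both proofs rely on Lemma~\ref{degree k} to force every son of the root into one of the small branches $B_2, B_2^*, B_3, B_3^*, B_3^{**}, B_4$ when $d_R\le 15$, and then defer to the computer catalogue in \cite{comp1100} for the finite residual range. Your version is in fact slightly more explicit: you compute the bound $n\le 151$ and you handle the case $d_v\le 2$ separately, whereas the paper tacitly dismisses that case (implicitly via Lemma~\ref{lem:deg2root}) and does not write down the numerical bound before appealing to the table.
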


\begin{proof}
If the root has a child $u$ that is none of $B_2$, $B_2^*$, $B_3$, $B_3^*$, $B_3^{**}$ or $B_4$, then by Lemma \ref{degree k} $d_u\geq 16$ and therefore the degree of the root is also at least 16 and we are done. So we may assume that all children of the root are $B_2$, $B_2^*$, $B_3$, $B_3^*$, $B_3^{**}$ or $B_4$. In \cite{comp1100}, all ABC-minimal trees of order up to 1100 have been determined and the largest one where the degree of the root is less than 16 is a tree with 110 vertices (contains 2 $B_4$ and 13 $B_3$ branches).
\end{proof}

\begin{lemma}
\label{lem:remove35}
Any ABC-minimal tree is similar to a tree $T$ in which one can remove a set of at most\/ $63$ vertices so that the resulting tree has no vertices of degree 3 or 5 and all ending branches are $B_3$. The removed vertices are all from at most four $B_4$-branches adjacent to the root and from at most one subtree corresponding to a son of the root.
\end{lemma}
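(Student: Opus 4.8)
The plan is to replace the given ABC-minimal tree by an ABC-extremal representative $T$ of its similarity class, so that all the structural results of this section become available, and then to proceed by pure surgery: locate the finitely many branches responsible for the degree-$3$ and degree-$5$ vertices and for the non-$B_3$ ending branches, and delete a bounded number of vertices to destroy them while leaving the rest of the tree intact. First I would localize the offenders. By Corollary \ref{cor:endingbranches} the only ending branches are $B_2,B_2^*,B_3,B_3^*,B_3^{**},B_4$ (recall that $B_5$ was excluded in Lemma \ref{degree k}); the exceptional branches $B_2^*,B_3^*,B_3^{**}$ occur at most once in total (Lemma \ref{no_b_exception}), $B_4$ occurs at most four times, and there are at most eleven vertices of degree $3$ (Lemma \ref{lem:d5}). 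By the corollary following Lemma \ref{degree k} every $B_4$-branch is attached to the root, and by Corollary \ref{cor:all but one} we may assume, after similarity exchanges, that every $B_2$, $B_2^*$, $B_3^*$ and $B_3^{**}$ branch is a child of a single son $u$ of the root, with $d_u\ge16$. Thus every vertex of degree $3$ or $5$ and every non-$B_3$ ending branch lies either in one of the (at most four) $B_4$-branches at the root or inside the single subtree $T_u$.

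Next I would specify the deletions. Each $B_4$-branch (root of degree $5$) is turned into a $B_3$-branch by deleting one degree-$2$ son together with its leaf — two vertices — which removes the degree-$5$ vertex and leaves a $B_3$. Each $B_2$-branch is deleted in its entirety ($5$ vertices), destroying its degree-$3$ root. The unique exceptional branch, if present, is treated by type: a $B_3^*$ is reduced to a $B_3$ by deleting the extra leaf on its pendant path ($1$ vertex), whereas a $B_2^*$ ($6$ vertices) or a $B_3^{**}$ ($10$ vertices, which contains a degree-$3$ vertex) is deleted entirely. After these deletions no degree-$3$ and no degree-$5$ vertex survives, and every ending branch is a $B_3$. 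All deleted vertices come from the $B_4$-branches at the root and from $T_u$, matching the claim on the location of the removed set.

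The count is then controlled by a short case analysis resting on the key interaction (the remark following Corollary \ref{cor:endingbranches}) that the presence of \emph{any} exceptional branch forbids all $B_4$-branches. With no exceptional branch we may have up to eleven $B_2$-branches and up to four $B_4$-branches, for a total of $11\cdot5+4\cdot2=63$ deleted vertices. With an exceptional branch present there are no $B_4$-branches, at most eleven degree-$3$ vertices in total, and each subcase is strictly cheaper: a $B_3^*$ gives $11\cdot5+1=56$; a $B_2^*$ gives $10\cdot5+6=56$ (it occupies one of the eleven degree-$3$ slots); a $B_3^{**}$ gives $10\cdot5+10=60$ (likewise). Hence at most $63$ vertices are deleted in every case, the extreme being attained only when eleven $B_2$'s and four $B_4$'s are present simultaneously.

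The step demanding the most care is verifying that the surgery introduces no \emph{new} degree-$3$ or degree-$5$ vertex. The root is untouched in degree and satisfies $d_R\ge16$ (Corollary \ref{cor:new}), and each converted $B_4$-root becomes a degree-$4$ root of a $B_3$. The delicate vertex is $u$: after deleting its $B_2$ and exceptional children, $T_u$ becomes a $C_k$-branch with $k$ equal to the number of $B_3$-children retained by $u$. Since $d_u\ge16$ while $u$ has at most twelve non-$B_3$ children, we get $k\ge3$, so the residual degree $k+1\ge4$ is never $3$; the sole value to exclude is $k+1=5$, i.e. $k=4$, which in the extreme case of eleven $B_2$'s forces $d_u=16$. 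The hard part of the proof will therefore be to rule out this degenerate configuration — equivalently, to show that whenever all eleven $B_2$-branches sit under $u$ the vertex $u$ retains at least five $B_3$-children (so $d_u\ge17$ and the residual $C_k$-branch has root degree $\ge6$) — since a naive further deletion of one $B_3$ would break the $63$-vertex budget. Establishing this lower bound on the number of $B_3$-children of $u$, via the extremal ordering and the degree-exclusion of Lemma \ref{degree k}, is the main obstacle.
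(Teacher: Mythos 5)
Your approach is the same as the paper's: use Corollary \ref{cor:endingbranches}, Lemmas \ref{no_b_exception}, \ref{lem:d5} and \ref{degree k}, and Corollaries \ref{cor:all but one} and \ref{cor:new} to localize every degree-3 vertex, every degree-5 vertex and every non-$B_3$ ending branch inside at most four $B_4$-branches at the root plus the subtree of a single son $u$ of the root, then delete: two vertices per $B_4$, five per $B_2$, a bounded amount for the one exceptional branch, with the extremal count $11\cdot 5+4\cdot 2=63$ attained only when exceptional branches (which exclude $B_4$'s) are absent. The paper's bookkeeping differs only cosmetically (it kills a possible 2-2 edge by deleting one vertex and trims $B_3^{**}$ to $B_3$ by deleting three, rather than deleting $B_2^*$ or $B_3^{**}$ wholesale); both schemes give the same bound.

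However, your proof is not complete: you explicitly leave unresolved the step you yourself single out as ``the main obstacle,'' namely that after all $B_2$-children (and the exceptional child) of $u$ are deleted, the residual degree of $u$ could be $5$ --- precisely when $d_u=16$ and eleven $B_2$-branches hang from $u$, leaving $u$ with four $B_3$-children and hence degree $5$ in the pruned tree, which violates the conclusion ``no vertices of degree 3 or 5.'' Announcing that this configuration must be ruled out is not the same as ruling it out, so as submitted the argument does not establish the lemma. It should be said plainly that the paper's own proof has exactly the same lacuna and does not even acknowledge it: it asserts that removing the $\le 11$ $B_2$-branches leaves all ending branches equal to $B_3$, but never checks the degree of their common father $w$ after the removal (nor the analogous case $w=R$ with $d_R=16$). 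So you have in fact located a genuine gap in the published argument; to close it one must either exclude the configuration ``$d_w=16$ with eleven $B_2$-children and four $B_3$-children'' by a direct ABC comparison (for instance, replacing the $84$-vertex subtree $T_w$ by twelve $B_3$-branches attached to $R$, in the spirit of the computations in Lemmas \ref{no_b_exception} and \ref{degree k}), or weaken the conclusion of the lemma to what its later applications in Lemmas \ref{lem:C_52_exists} and \ref{lem:s_is_zero} actually need, namely that after the deletions every main subtree of the root is a $B_3$- or a $C_k$-branch.
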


\begin{proof}
We may assume $T$ is ABC-minimal. By removing at most one vertex, we eliminate possible 2-2 edge. By removing 3 vertices, $B_3^{**}$ can be changed into a $B_3$. Similarly, by removing 2 vertices, a $B_4$ can be changed into a $B_3$. These operations do not change degrees of vertices except for the roots of newly formed $B_3$-branches. Since a 2-2 edge or $B_3^{**}$ cannot coexist with a $B_4$, this step removes at most 8 vertices all together.

By Lemma \ref{degree k} (a), any vertex of degree 4 is a $B_3$-branch (or $B_3^*$ or $B_3^{**}$) and any non-root vertex of degree 5 is a $B_4$ adjacent to the root. Thus, all remaining vertices of degree 3 have fathers of degree at least 16 (by Lemma \ref{degree k} and Corollary \ref{cor:new}). Each such father $w$ has only $B_3$ and $B_2$ as subtrees. By (P1)--(P3), all $B_2$-branches are descendants of a single such vertex $w$. By removing all ($\leq 11$) $B_2$-branches, all ending branches are $B_3$. By Corollary \ref{cor:all but one} we may assume that possible 2-2 edge or $B_3^{**}$ is also a subtree of $T_w$. this complete the proof.
\end{proof}

\section{The main structure}
\label{sect:5}

\begin{lemma}
\label{lem:C_52_exists}
Let $T$ be an ABC-extremal tree whose root $R$ has degree $d_R\geq 2888$. Then there are at least $d_R-327$ $C_{k}$- and $C_{k+1}$-branches attached to the root for some $k\in \{50,51,52\}$.
\end{lemma}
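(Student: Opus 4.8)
The plan is to analyze the sons of the root $R$ directly. By the structural results established so far, every son $v$ of $R$ falls into one of three types: its subtree $T_v$ is a $C_j$-branch for some $j$; or $T_v$ is one of the bounded ``ending'' branches $B_2,B_2^*,B_3,B_3^*,B_3^{**},B_4,B_5$ allowed by Corollary \ref{cor:endingbranches}; or $v$ is the single exceptional son of degree $\ge 16$ permitted by Corollary \ref{cor:all but one}. (Degree-$2$ sons are excluded by Lemma \ref{lem:deg2root}, and degrees $6$–$15$ by Lemma \ref{degree k}.) By Lemma \ref{lemma:C}, all $C$-branches hanging off $R$ have sizes in $\{m,m+1\}$ for a single integer $m$, so once $m$ is known, choosing $k=m$ makes \emph{every} $C$-branch a $C_k$- or $C_{k+1}$-branch. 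Thus the whole statement reduces to two claims: that the number of sons that are \emph{not} $C$-branches is at most $327$, and that $m\in\{50,51,52\}$.

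First I would bound the non-$C$ sons. The ending branches other than $B_3$ contribute only an absolute constant: at most $11$ copies of $B_2$ (Lemma \ref{lem:d5}), at most $4$ copies of $B_4$, at most one $B_5$, at most one $B$-exceptional branch, and at most one large exceptional son. The only type that could a priori be numerous is the directly attached $B_3$-branch, and this is the crux. Here I would use the vertex-neutral regrouping that replaces $365$ directly attached $B_3$-branches by $7$ $C_{52}$-branches: $364$ of the $B_3$-branches become the $B_3$-subtrees of the seven new $C_{52}$-roots, while the $365$th $B_3$-branch is dismantled and its seven vertices serve as those roots. Since a $C_{52}$-root has degree $53$, the resulting tree is strictly $\succ$-larger, so once one checks (via Proposition \ref{prop:2} and the explicit estimates) that this move does not increase the ABC-index for $d_R\ge 2888$, extremality forbids having $365$ or more directly attached $B_3$-branches. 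Combining the residual bound on $B_3$-branches with the constants above yields the stated bound $327$ on the number of non-$C$ sons.

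Consequently $R$ has at least $d_R-327\ge 2561$ sons that are $C$-branches of size $m$ or $m+1$; in particular more than half of them, hence at least $365$, share a common size. I would then pin down $m$ using the counting lemmas. If $m\ge 53$ both sizes are $\ge 53$, so Lemma \ref{lem:k_is_51} caps the number of branches of each size below $365$ (and Lemma \ref{lem:C_k} keeps them below $143$), contradicting the presence of $\ge 2561$ of them. If $m\le 49$ both sizes are $\le 50$, and Lemma \ref{lem:7k+8} (applicable here because $d_R\ge 2888>874$) bounds the number of branches of each size by $7j+7=O(1)$, again a contradiction. Hence $m\in\{50,51,52\}$. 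Taking $k=m$, every $C$-branch is a $C_k$- or $C_{k+1}$-branch and there are at least $d_R-327$ of them, which is the assertion.

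The main obstacle is the $B_3$-regrouping step. The ABC-landscape around $C_{52}$ is extraordinarily flat: the per-vertex gain of grouping $B_3$-branches into $C_{52}$-branches is only of order $10^{-4}$, and for $d_R$ near $2888$ the ``$365\ B_3\leftrightarrow 7\ C_{52}$'' move is essentially ABC-neutral, because shrinking $d_R$ raises the $f$-values of all the other root edges and nearly cancels the gain. Making this rigorous — separating competing effects of size $10^{-4}$ uniformly in $d_R$, and leaning on the $\succ$-ordering to break ties where the ABC-index is exactly flat — is where the threshold $d_R\ge 2888$ and the precise constant $327$ come from, and it is exactly the point that forces the quantitative, partly computer-verified estimates rather than a purely structural argument.
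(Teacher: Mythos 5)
Your outline follows the paper's proof almost exactly: decompose the sons of $R$ into $C$-branches plus a bounded set of others, rule out a large number of directly attached $B_3$-branches by a vertex-neutral regrouping into $C$-branches, and then pin down the common size via Lemmas \ref{lemma:C}, \ref{lem:C_k}, \ref{lem:k_is_51} and \ref{lem:7k+8}. The size-determination step in your third paragraph is correct and is essentially what the paper does. However, two concrete points do not close.

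First, the constant $327$ does not follow from your accounting. Your regrouping threshold ($365$ $B_3$-branches $\to$ $7$ $C_{52}$-branches) only yields ``at most $364$ directly attached $B_3$-branches,'' and adding your list of other non-$C$ sons ($11$ $B_2$'s, $4$ $B_4$'s, one $B_5$, one $B$-exceptional branch, one large exceptional son) gives $364+18=382$, not $327$. The paper instead takes $s=323$ $B_3$-branches and replaces them by seven $C_{46}$-branches (still vertex-neutral, since $323=7\cdot 46+1$), verifying $\Delta(T,T')>0$ for $d_R\ge 2888$; this caps the $B_3$-count at $322$. It then invokes Lemma \ref{lem:remove35} to confine all $B_2$, $B_2^*$, $B_3^*$, $B_3^{**}$ and the possible 2-2 edge to a \emph{single} son's subtree, so the non-$C$ sons number at most $322+4+1=327$. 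Without both of these adjustments your argument proves the lemma only with a worse constant, i.e.\ a strictly weaker statement than the one asserted.

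Second, your fallback of ``leaning on the $\succ$-ordering to break ties'' is backwards. The regrouping decreases the degree of the root $R$ (from $d_R$ to $d_R-358$ in your version), and the order $\succ$ compares root degrees first, so the new tree is $\succ$-\emph{smaller}; an ABC-neutral move to a $\succ$-smaller tree contradicts nothing. (The degree-$53$ roots of the new $C_{52}$-branches are irrelevant to the first comparison.) So you cannot afford the move to be ABC-neutral: you must establish the strict inequality $\Delta(T,T')>0$ at $d_R\ge 2888$, which is exactly the computer-assisted estimate the paper carries out by bounding the degrees of the remaining sons of $R$ in the two extreme regimes (all near $142$, or all near $53$ with at most $364$ exceptions).
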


\begin{proof}
Let $x_1,\dots, x_{d_R}$ be the sons of $R$ and $T_i=T_{x_i}$ their subtrees ($i=1,\dots,d_R$). By removing one of these subtrees and at most four $B_4$-branches (Lemma \ref{lem:remove35}), all the remaining subtrees are copies of $B_3$ and copies of $C_{k_i}$. Lemma \ref{lem:C_k} shows that each such $k_i$ is at most $142$.

Suppose first that there are at most 322 $B_3$-branches among them. Then we have at least $d_R-327 > 2561$ $C_{k_i}$-branches. By Lemma \ref{lemma:C}, the values $k_i$ take only two consecutive values. By Lemmas \ref{lem:k_is_51} and \ref{lem:7k+8}, all but at most 364 of these $C$-branches are $C_{51}$ or $C_{52}$.

Suppose now that there are at least $323$ $B_3$-branches adjacent to $R$. Now consider $s=323$ copies of these $B_3$ branches and apply the change outlined in Figure \ref{fig:s_is_zero4}, where we replace these $B_3$-branches with seven $C_{(s-1)/7}$-branches (there may be more $B_3$ branches attached to the root in the shaded part).

\begin{figure}[htb]
    \centering
    \includegraphics[width=0.92\textwidth]{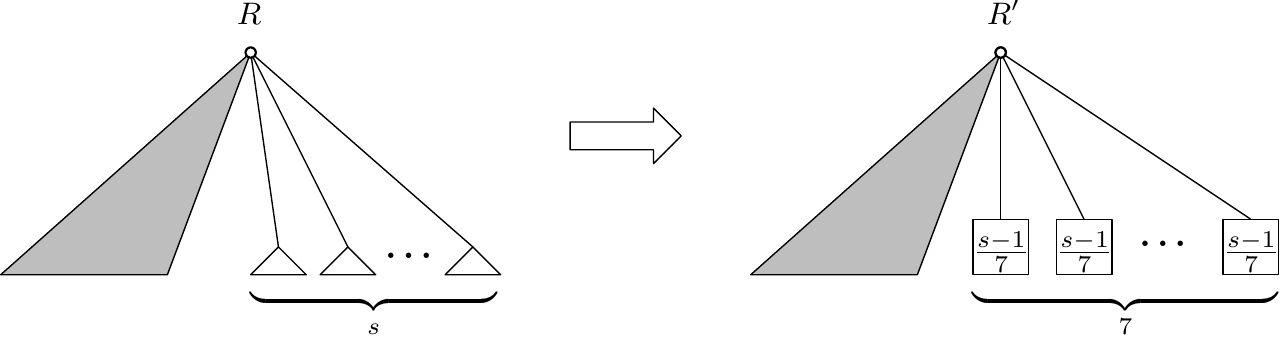}
    \caption{Improving ABC-index if there are $s$ $B_3$-branches, where $s\equiv 1 \pmod 7$.}
    \label{fig:s_is_zero4}
\end{figure}

Let $z_i$ ($i=1,\ldots,d_R-s$) be neighbors of $R$ of in the shaded part of Figure \ref{fig:s_is_zero4}. Then the change of ABC-contribution of the corresponding edge is $f(d_R,d_{z_i})-f(d_R-s+7,d_{z_i})$, which is decreasing in $d_{z_i}$ by Proposition \ref{prop:2}. We know that $d_{z_i}\leq 143$ but it is not possible to have the degree of all of them equal to 143. So we have two extreme cases here. If we have at most $2\times 364$ $C_k$ branches, then we can have 364 of $z_i$'s to be the root of $C_{142}$ and the rest to be the root of $C_{141}$ branches. Note that in this case the other children of the root will be (at most 4) $B_4$ and $B_3$ branches. Therefore the change in the ABC-index will be bounded below as follows:
\begin{eqnarray*}
  \Delta(T,T') &\geq&  364f(d_R,143)+364f(d_R,142)+4f(d_R,5)+(d_R-2\times 364-4)f(d_R,4)+6\sqrt{2}/2\\
  &&  -364f(d_R-s+7,142)-364f(d_R-s+7,143)-7f(d_R-s+7,(s+6)/7)\\
  && -4f(d_R-s+7,5)-(d_R-s-2\times 364-4)f(d_R-s+7,4)-(s-1)f((s+6)/7,4).
\end{eqnarray*}
It can be shown that this is positive for $s=323$ when $d_R\geq2092$.

On the other hand, if we have more than $2\times 364$ $C_k$ branches, then all but $364$ of them are $C_{52}$. So in the worst case the root has 364 children of degree 54 and $d_R-364-s-4$ children of degree 53\footnote{The same can be concluded for the subtrees containing vertices of degree 3 because of (P1)--(P3).} and 4 children of degree 5. Therefore we have:
\begin{eqnarray*}
  \Delta(T,T') &\geq&  364f(d_R,54)+(d_R-364-4)f(d_R,53)+4f(d_R,5)+s\times f(d_R,4)+6\sqrt{2}/2\\
  &&  -364f(d_R-s+7,54)-(d_R-364-4)f(d_R-s+7,53)\\
  && -4f(d_R-s+7,5)-7f(d_R-s+7,(s+6)/7)-(s-1)f((s+6)/7,4).
\end{eqnarray*}
It can be shown that this is positive for $s=323$ when $d_R\geq2888$. This completes the proof.
\end{proof}

\begin{lemma} \label{lem:s_is_zero}
Let $T$ be an ABC-extremal tree of order $n\geq 1078940$. Then there are no $B_3$-branches attached to the root and there are at least\/ $\tfrac{1}{365} n - 377$ $C_{52}$-branches.
\end{lemma}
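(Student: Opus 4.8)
The plan is to leverage Lemma~\ref{lem:C_52_exists} to control the root degree and the structure of branches, and then use a two-step counting and local-exchange argument. First I would establish that for $n$ sufficiently large (such as $n\geq 1078940$), the root degree $d_R$ must be large enough---in particular $d_R\geq 2888$---so that Lemma~\ref{lem:C_52_exists} applies. This follows because each subtree attached to the root has bounded size (each $C_k$-branch has at most $7\cdot 142+1$ vertices by Lemma~\ref{lem:C_k}, each $B_3$-branch has $7$ vertices, and by Corollary~\ref{cor:all but one} all but one son of the root is a root of a $C_k$-branch or a $B_3$-branch), so a tree with very many vertices forces $d_R$ to be large. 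Thus Lemma~\ref{lem:C_52_exists} guarantees that all but at most $327$ sons of the root are roots of $C_k$- or $C_{k+1}$-branches for some $k\in\{50,51,52\}$, and combined with Lemmas~\ref{lem:k_is_51} and~\ref{lem:7k+8} (which cap the number of $C_k$-branches for $k\leq 51$), these values are forced to be $51$ or $52$ with at most $364$ exceptions.

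The main content is to show that \emph{no} $B_3$-branches remain attached to the root. Here I would argue by contradiction: suppose there is still a $B_3$-branch adjacent to $R$ after the large-$n$ structure is in place. The strategy is the same local-exchange philosophy used repeatedly in the paper---compare $T$ with a modified tree $T'$ and show $\Delta(T,T')>0$. Concretely, if a $B_3$-branch is a son of the root, then since almost all other sons are $C_{52}$-branches (roots of degree $53$), I would move the $B_3$-branch down to become a son of one of these $C_{52}$-branch roots (turning a $C_{52}$ into a $C_{53}$ and decreasing $d_R$), or regroup a block of $B_3$-branches into $C$-branches as in the change of Figure~\ref{fig:s_is_zero4} used in the proof of Lemma~\ref{lem:C_52_exists}. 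Using Proposition~\ref{prop:2} to reduce to the worst case over the degrees of the remaining sons $z_i$ of $R$ (which are $53$ or $54$), the difference $\Delta(T,T')$ becomes a function of $d_R$ alone, and one checks it is positive for $d_R$ in the relevant (large) range. The threshold $n\geq 1078940$ is presumably exactly the bound that forces $d_R$ large enough to close out all the required inequalities.

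Once $s=0$ is established (no $B_3$-branches at the root), the final counting step is routine: all sons of the root are $C_{k_i}$-branches with $k_i\in\{51,52\}$ except for at most one exceptional son (by Corollary~\ref{cor:all but one}) and at most four $B_4$-branches (Lemma~\ref{lem:remove35}), and among the $C$-branches all but at most $364$ have $k_i=52$. A $C_{52}$-branch has exactly $1+52\cdot 7=365$ vertices. Since the exceptional subtree, the $B_4$-branches, and the at-most-$364$ non-$C_{52}$ branches together contain only $O(1)$ vertices, the number of $C_{52}$-branches is at least $\tfrac{1}{365}n - O(1)$; tracking the constants (each exception costs at most $365$ vertices and there are at most $364+13=377$ of them, roughly) yields the stated bound of $\tfrac{1}{365}n - 377$.

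I expect the hard part to be the $s=0$ claim, i.e.\ rigorously excluding $B_3$-branches at the root. The subtlety is that converting $B_3$-branches into $C$-branches must be done in a way that genuinely lowers the ABC-index \emph{after} accounting for the forced increase in the degree of the exceptional sons $z_i$ (which sit at degree $53$ or $54$), and one must carefully handle the small-$d_R$ boundary cases by explicit computer verification while the large-$d_R$ regime is handled by monotonicity in $d_R$ via Proposition~\ref{prop:2}. Determining the precise numerical threshold on $n$ that makes $d_R$ large enough for every one of these inequalities to hold simultaneously is the delicate bookkeeping that drives the specific value $1078940$.
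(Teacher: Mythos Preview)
Your plan matches the paper's proof closely: the key exchange is exactly the one you describe (and the paper depicts in Figure~\ref{fig:s_is_zero3}), namely moving a lone $B_3$ from $R$ onto an existing $C_k$ with $k\in\{51,52\}$ to form $C_{k+1}$ while decreasing $d_R$ by one; Proposition~\ref{prop:2} reduces the resulting inequality to a one-variable check in $d_R$, positive once $d_R\ge 2948$. The final count $d_R-1-4-364\ge \tfrac{n}{365}-377$ is also the paper's.

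The one step where your outline would actually fail is the opening claim ``establish $d_R\geq 2888$ directly from $n\ge 1078940$''. With only Lemma~\ref{lem:C_k} ($k\le 142$) available, each subtree of $R$ can have up to $7\cdot 142+1=995$ vertices, so $n=1078940$ gives merely $d_R\gtrsim n/995\approx 1084$, far short of $2888$. The paper closes this with a \emph{bootstrap}: it first splits on whether $d_R\ge 2888$ (then Lemma~\ref{lem:C_52_exists} applies) or $d_R<2888$ (then there are at most $2887$ $B_3$-sons), and in the latter case the crude $995$-bound already yields $d_R\ge 874$; at that degree Lemmas~\ref{lemma:C} and~\ref{lem:7k+8} force every $k_i\le 53$, so each branch now has at most $1+7\cdot 53$ vertices, and a \emph{second} count gives $d_R>\tfrac{n}{365}-8\ge 2948$. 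The threshold $1078940=365\cdot 2956$ is calibrated precisely to this second-pass bound; your single-pass count would need roughly $n\ge 2.9\times 10^{6}$ to reach $d_R\ge 2888$. Apart from this missing bootstrap, your argument is the paper's.
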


\begin{proof}
We start as in the previous proof. By removing at most 63 vertices, we end up with a subtree $T_1$ whose main subtrees are all $B_3$ and $C_{k_i}$ ($i=1,\dots,r$), where $k_1\ge k_2\ge \dots \ge k_r$. First we claim that $T$ contains a $C_{51}$ and $C_{52}$-branch. If $d_R\ge 2888$, then Lemma \ref{lem:C_52_exists} shows that there is a large number of $C_{51}$ or $C_{52}$ among the main subtrees, so one of them is also contained in $T$. Suppose now that $d_R<2888$. Then there are at most $2887$ $B_3$-branches adjacent to $R$. Removing them, leaves at least $n-63-7\cdot 2887 = n-20272$ vertices.
By Lemma \ref{degree k}, all degree-5 vertices in $T$ are contained in at most four $B_4$-branches and all degree-3 vertices are contained in a single subtree $T_x$ of some son $x$ of $R$. By (P1)--(P3), $x$ has smallest degree among all sons of $R$ (when it exists).

 Thus $T_{x_1},\dots, T_{x_{r-1}}$ are also subtrees of $T$. In particular, each such $k_i$ is at most 143. Thus, $d_R-1 \ge (n-20272-(1+7k_r))/(1+7\cdot 143)) \ge 873$. By Lemmas \ref{lemma:C} and \ref{lem:7k+8}, each $k_i$ ($2\le i<r$) is either 51, 52, or 53, and at least one of them is 51 or 52. 

We conclude that there is a $C_k$-branch in $T$ with $k\in \{51,52\}$.
Suppose that a $B_3$-branch is attached to the root. Now consider the change suggested in Figure \ref{fig:s_is_zero3}.

\begin{figure}[htb]
    \centering
    \includegraphics[width=0.65\textwidth]{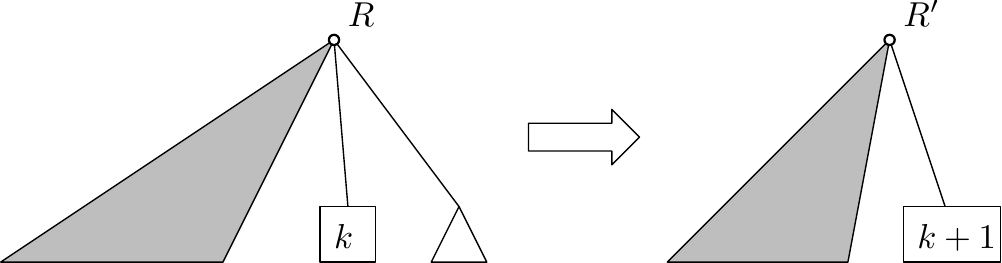}
    \caption{Merging $C_{k}$ and $B_3$ attached to the root.}
    \label{fig:s_is_zero3}
\end{figure}

If a neighbor of $R$ in the shaded part of Figure \ref{fig:s_is_zero3} has degree $a$, Lemmas \ref{lemma:C} and \ref{lem:k_is_51} imply that $4\leq a\leq 54$. Then the change of the ABC-contribution of the corresponding edge is $f(d_R,a)-f(d_R-1,a)$, which is decreasing in $a$ by Proposition \ref{prop:2}. Therefore the worst case happens when $k=52$ and $a=54$. Also note that by Lemma \ref{lem:k_is_51} there can be at most 364 copies of $C_{53}$ in $T$. So we have:
\begin{eqnarray*}
  \Delta(T,T') &\geq&  364f(d_R,54) + (d_R-365)f(d_R,53)+f(d_R,4)+52f(53,4)\\
&&  -  365f(d_R-1,54) - (d_R-366)f(d_R-1,53)-53f(54,4).
\end{eqnarray*}
Since the right hand side only depends on $d_R$ it is easy to check that for $d_R\geq 2948$, we have $ABC(T)-ABC(T')>0$.

To obtain a contradiction, it remains to prove that $d_R\geq 2948$. First of all, we may assume that $k_1\le 53$. By repeating the calculation from above: we first remove up to 63 vertices, next we remove $T_{x_r}$ which has at most $1+7\cdot53 = 365$ vertices.
For possible $C_{53}$-branches we remove one of their $B_3$-subtrees. Each of the resulting branches then contains at most 365 vertices. Thus $$d_R \ge 1 + (n - 63 - 365-7\cdot364)/365 >\frac{n}{365}-8\geq 2948.$$

The last inequality combined with Lemma \ref{lem:7k+8} also shows that most of the $C$-branches are $C_{52}$ and that at most 364 of these are $C_{51}$ or $C_{53}$.

We proved that there are no $B_3$-branches attached to the root. This means  that there are at least $ d_R -1 -4 -364 \geq \frac{n}{365} - 377$ $C_{52}$-branches.
\end{proof}

\begin{theorem}
\label{thm:large_structure}
For every ABC-extremal tree of order $n$, one can delete $O(1)$ vertices to obtain a subtree $T_r$ shown in Figure \ref{fig:main_struct large}, where $r=\lfloor n/365\rfloor - O(1)$. More precisely after deleting at most $63+o(1)$ vertices we are left with a tree whose root has degree $d$ satisfying $\tfrac{1}{365}n - (13+o(1)) \le d \le \tfrac{1}{365}n + (1+o(1))$, and all its sons are $C_{52}$-branches together with at most $364$ $C_k$-branches where $k$ is either $51$ or $53$ and possibly one additional $C_l$-branch where $l\leq 52$.
\end{theorem}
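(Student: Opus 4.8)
The plan is to assemble Theorem~\ref{thm:large_structure} almost entirely from the structural lemmas already proved, treating it as a bookkeeping consolidation rather than a fresh optimization argument. First I would invoke Lemma~\ref{lem:remove35} to pass, up to similarity, to a tree in which a bounded set of at most $63$ vertices can be removed so that no vertex has degree $3$ or $5$ and all ending branches are $B_3$; this already confines the ``non-$C_{52}$ debris'' to at most four $B_4$-branches at the root and one exceptional subtree hanging off one son of the root. The removed vertices account for the ``$63+o(1)$'' in the statement. Next I would apply Lemma~\ref{lem:s_is_zero}, which requires $n \ge 1078940$ and yields two things at once: there are no $B_3$-branches attached to the root, and there are at least $\tfrac{1}{365}n - 377$ copies of $C_{52}$. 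For the finitely many values $n < 1078940$ one absorbs everything into the $O(1)$ constants, since a bounded tree trivially admits deletion of $O(1)$ vertices to reach any prescribed bounded-size target.

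With the root now carrying only $C_k$-branches (plus the bounded exceptional debris already stripped), I would pin down the admissible values of $k$ by combining three results. Lemma~\ref{lemma:C} forces any two sibling $C$-branches to differ in size by at most $1$, so the multiset of $k_i$ occupies at most two consecutive integers; Lemmas~\ref{lem:k_is_51} and~\ref{lem:7k+8} then force all but at most $364$ of these to equal $52$, with the exceptions being $C_{51}$ or $C_{53}$. The ``possibly one additional $C_l$-branch with $l \le 52$'' is exactly the son $x$ whose subtree $T_x$ absorbed the vertices of degree $3$ (equivalently the leftover branch from Lemma~\ref{lem:remove35} and the $B_3$-merging step of Lemma~\ref{lem:s_is_zero}); since $C_{52}$-branches have $365$ vertices each, this single residual branch has size $\le 365$ and is itself a $C_l$ with $l \le 52$, so it may be left in place or folded into the $O(1)$ deletions as convenient.

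The remaining task is the degree count for the root. Writing $d$ for the root degree after deletions, each son is a $C$-branch of size $1 + 3k_i$ with $k_i \in \{51,52,53\}$ plus one residual branch, so the total vertex count satisfies $n - O(1) = \sum_i (1 + 3k_i)$. Since at most $364$ of the $k_i$ deviate from $52$ and each $C_{52}$-branch contributes exactly $365$ vertices, one has $365\, d - (\text{bounded correction}) = n - O(1)$, which rearranges to $\tfrac{1}{365}n - (13 + o(1)) \le d \le \tfrac{1}{365}n + (1 + o(1))$. The upper bound comes from the fact that shrinking branches (to $C_{51}$) only increases the count of branches needed, while the lower bound accounts for the at most $364$ oversized $C_{53}$-branches together with the handful of $B_4$-branches and the residual son; the explicit constants $13$ and $1$ follow by tracking the worst-case contributions of these bounded exceptions against the uniform $365$ per $C_{52}$.

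The main obstacle I anticipate is not any single inequality but the careful bookkeeping needed to verify that the various ``$O(1)$'' and ``$o(1)$'' error terms from Lemmas~\ref{lem:remove35}, \ref{lem:C_52_exists}, and~\ref{lem:s_is_zero} combine to give precisely the claimed constants $63$, $13$, and $1$ rather than looser bounds, and to confirm that the exceptional branches (the $\le 4$ $B_4$'s, the single $B_3^{**}$ or $2$-$2$ edge, and the one residual $C_l$-son) cannot overlap in a way that violates the stated counts. In particular I would need to double-check that the lower threshold on $n$ in Lemma~\ref{lem:s_is_zero} is genuinely large enough that the additive $-377$ there and the $-377$ in the $C_{52}$-count translate into the sharper per-root bound $-(13+o(1))$ once divided through by $365$; this is where the $o(1)$ slack is spent, and getting the constants exactly right is the only delicate part.
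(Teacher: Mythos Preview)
Your approach is essentially identical to the paper's: for large $n$ you invoke Lemma~\ref{lem:s_is_zero} (which in turn rests on Lemmas~\ref{lem:remove35}, \ref{lemma:C}, \ref{lem:k_is_51}, \ref{lem:7k+8}) and absorb small $n$ into the $o(1)$ terms, then do the arithmetic on branch sizes. One slip to fix: a $C_k$-branch has $1+7k$ vertices (each $B_3$ has $7$), not $1+3k_i$ as you wrote, though you correctly use $365$ for $C_{52}$ so this is just a typo and does not affect the argument.
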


\begin{proof}
The $o(1)$ notation takes care of small values of $n$, so we may assume that $n\ge 1078940$, when the value $o(1)$ will be 0.
Then Lemma \ref{lem:s_is_zero} applies (in which case we remove at most 36 vertices in $B_4$ branches and at most 365 vertices in a subtree of the root of smallest degree in which some ending branches would be different from $B_3$).
\end{proof}

\begin{figure}[htb]
    \centering
    \includegraphics[width=0.375\textwidth]{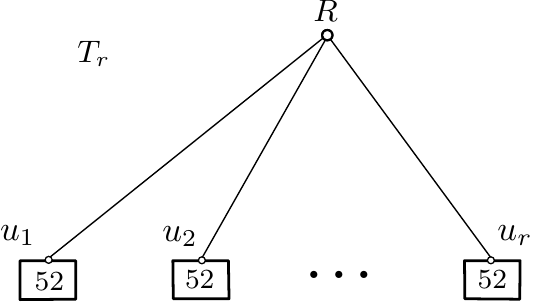}
    \caption{The structure of large ABC-minimal trees after deleting $O(1)$ vertices.}
    \label{fig:main_struct large}
\end{figure}

The estimates on the number of vertices of the tree used in the above theorems are very liberal. The transition to the desired form with mostly $C_{52}$-branches occurs much earlier. Let $n$ be the order of an ABC-minimal tree. As introduced in Figure \ref{fig:main_struct}, let $r$ be the number of $C_k$-branches adjacent to the root, and let $s$ be the number of $B_3$-branches adjacent to the root. For small values of $n$ the main structure is when $r=0$ which gives us the structure that was conjectured by Gutman \cite{ref24}. Simulation show that for larger $n$ we have positive values of $r$. To be more precise, we have positive $r$ when the following holds:
%
$n\equiv 0 \mod 7$ and $n\geq 525$,
$n\equiv 1 \mod 7$ and $n\geq 939$,
$n\equiv 2 \mod 7$ and $n\geq 422$,
$n\equiv 3 \mod 7$ and $n\geq 864$,
$n\equiv 4 \mod 7$ and $n\geq 508$,
$n\equiv 5 \mod 7$ and $n\geq 740$, or
$n\equiv 6 \mod 7$ and $n\geq 664$.

Also, for a very large $n$, Lemma \ref{lem:s_is_zero} shows that $s=0$, which is the structure conjectured in \cite{ourpaper}.

\begin{corollary}
\label{cor:gamma_n asympt}
Let $\gamma_n = \min\{ \abc(T) : |V(T)|=n\}$ and let $c_0 = \tfrac{1}{365}\sqrt{\tfrac{1}{53}} \bigl( 1 + 26\sqrt{55} + 156\sqrt{106}\, \bigr)$. Then
$$
   c_0 n - 365 c_0 + \tfrac{51}{2} \sqrt{\tfrac{1}{53}} - O(n^{-1}) \le \gamma_n 
   \le c_0n + 365 c_0 + \tfrac{51}{2} \sqrt{\tfrac{1}{53}} + O(n^{-1})
$$
and hence
$$
    \lim_{n\to \infty}\tfrac{1}{n}\gamma_n = c_0 \approx 0.67737178.
$$
\end{corollary}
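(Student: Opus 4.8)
The plan is to show that $c_0$ is exactly the asymptotic ``ABC per vertex'' of the pure $C_{52}$-structure, and that both estimates follow by evaluating the ABC index of a tree that agrees with this structure up to $O(1)$ vertices, as provided by Theorem \ref{thm:large_structure}. First I would record the per-branch bookkeeping. A single $C_{52}$-branch has $1 + 52\cdot 7 = 365$ vertices; its $52$ edges from the branch root (of degree $53$) to the $B_3$-roots each contribute $f(53,4) = \tfrac12\sqrt{\tfrac{55}{53}}$, while each of its $52$ constituent $B_3$-branches contributes $6\,f(2,1) = 3\sqrt2$ from its degree-$2$-incident edges. Using $\sqrt2 = \sqrt{106}\,\sqrt{\tfrac1{53}}$, the internal contribution of one branch equals $\sqrt{\tfrac1{53}}\bigl(26\sqrt{55} + 156\sqrt{106}\bigr)$, and adding the limiting value $\sqrt{\tfrac1{53}}$ of the edge $f(d_R,53)$ as $d_R\to\infty$ gives $\sqrt{\tfrac1{53}}\bigl(1 + 26\sqrt{55}+156\sqrt{106}\bigr) = 365\,c_0$. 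Thus each branch accounts for $365$ vertices and $365\,c_0$ of ABC index, which already identifies $c_0$ as the density and explains the limit.

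To capture the precise constant I would retain the next-order term. If the root has degree $d$ and all its sons are $C_{52}$-branches, then $n = 1 + 365d$ and, summing the $d$ root edges,
\[
 d\,f(d,53) = \tfrac{1}{\sqrt{53}}\sqrt{d^2 + 51d} = \sqrt{\tfrac1{53}}\Bigl(d + \tfrac{51}{2}\Bigr) + O(d^{-1}).
\]
Combining this with the internal contributions and using $365\,c_0\,d = c_0(n-1)$ yields
\[
 \abc(T) = c_0 n - c_0 + \tfrac{51}{2}\sqrt{\tfrac1{53}} + O(n^{-1})
\]
for the pure structure; the term $\tfrac{51}{2}\sqrt{\tfrac1{53}}$ is exactly the advertised constant, and $-c_0$ lies inside $[-365c_0,\,365c_0]$. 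For the \emph{upper bound} for arbitrary $n$ I would take $d = \lceil (n-1)/365\rceil$ full $C_{52}$-branches together with one remainder branch realizing the leftover at most $365$ vertices. The computation above applies verbatim to the $C_{52}$-part, and the remainder branch contributes overhead equal to at most one branch's worth above the baseline $c_0(n-1)$, i.e. at most $365\,c_0 + O(n^{-1})$, which gives the stated upper estimate.

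For the \emph{lower bound} I would invoke Theorem \ref{thm:large_structure}: the ABC-minimal tree, after deleting $O(1)$ vertices, is the structure of Figure \ref{fig:main_struct large}, with root degree $d = \tfrac{n}{365} + O(1)$ and all but $O(1)$ of its sons being $C_{52}$-branches. I would decompose $\abc(T) = c_0(n-1) + \sum_i\bigl(E_i + F_i\bigr)$, where $F_i = f(d,k_i+1)$ is the $i$-th root edge and $E_i = \mathrm{interior}(C_{k_i}) - c_0\,|C_{k_i}|$ is the ``excess'' of the $i$-th branch. For a $C_{52}$-branch one has $E_i + F_i = f(d,53) - \sqrt{\tfrac1{53}}$, and summing over the $a = d - O(1)$ such branches reproduces $\tfrac{51}{2}\sqrt{\tfrac1{53}} + O(n^{-1})$, since $a/d\to 1$. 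The remaining contribution comes from the boundedly many exceptional pieces: the $C_{51}$- and $C_{53}$-branches, one $C_l$-branch, and the deleted $B_2$-, $B_4$-, $B_3^{**}$- and $2$-$2$-edge vertices. Because $C_{52}$ minimizes the branch density (Lemmas \ref{lem:k_is_51}, \ref{lem:7k+8}, and \ref{lemma:C}), every $C_{51}$- or $C_{53}$-exception only raises the excess, while the $O(1)$ remaining low-density ``junk'' vertices (notably $B_4$-branches, whose density is below $c_0$) can lower it only by $c_0$ times their number, hence by $O(1) < 365\,c_0$. This confines the total overhead to $[-365c_0,\,365c_0]$ and yields the matching lower estimate; dividing by $n$ and letting $n\to\infty$ then gives $\lim_{n\to\infty}\tfrac1n\gamma_n = c_0$.

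The main obstacle I anticipate is the quantitative control of these bounded-but-possibly-numerous exceptional contributions so that they fit inside the narrow $\pm 365\,c_0$ window. In particular, there may be up to $364$ $C_{51}$- or $C_{53}$-branches, and I must verify that their cumulative excess is $O(1)$ rather than $\Theta(364)$; this hinges on the fact that $51$, $52$, $53$ all sit within one unit of the continuous minimizer of the branch-density function, so each such branch's excess over $C_{52}$ is only a \emph{second-order} (tiny) constant, keeping the sum $O(1)$. A secondary technical point is that deleting the $O(1)$ junk vertices alters the degrees of a bounded number of neighbouring vertices, so one must check that this perturbs $\abc$ by only $O(1)$ in the controlled direction; together with the explicit density deficit of each $B_4$-branch this is what pins the final constant to exactly $\tfrac{51}{2}\sqrt{\tfrac1{53}}$ with the asserted $\pm 365c_0$ slack.
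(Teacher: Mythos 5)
Your proposal is correct and follows essentially the same route as the paper: the identical per-branch bookkeeping showing one $C_{52}$-branch carries $365$ vertices and $365c_0$ of ABC-index (plus the $\tfrac{51}{2}\sqrt{1/53}$ correction from expanding $d\,f(d,53)$), an upper bound from the explicit tree built of $\lfloor (n-1)/365\rfloor$ full $C_{52}$-branches plus a truncated one, and a lower bound read off from Theorem \ref{thm:large_structure}. If anything, your excess-decomposition for the lower bound is more explicit than the paper's one-sentence argument about where the $O(1)$-many $C_{51}$/$C_{53}$ exceptions and deleted vertices must be absorbed, and the "obstacle" you flag there is a real looseness that the paper itself glosses over.
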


\begin{proof}
Let us first establish the upper bound. Let $r=\lfloor(n-1)/365\rfloor$, and $t=n-365r-1$. Consider the tree $T_{r+1}$ with $r+1$ $C_{52}$-branches, and let $T$ be a tree of order $n$ obtained from $T_{r+1}$ by removing $365-t$ vertices from the last $C_{52}$-branch. The removal of the vertices can be done in such a way that
\begin{equation}
  \abc(T_r)\le \abc(T)\le \abc(T_{r+1}).
\label{eq:between Tr}
\end{equation}
To see this, note first that deleting any subset of vertices of degree 1 from the $C_{52}$-branch decreases the ABC-index. The same holds when removing any subset of $B_3$-branches and some degree-1 vertices. Finally, by removing the last vertex -- the root of the $C_{52}$-branch -- a short calculation shows that the ABC-index drops as long as $r$ is large enough (which we may assume).

Thus, it suffices to estimate $\abc(T_{r+1})$ (and $\abc(T_r)$), which we do next. Clearly,
\begin{eqnarray*}
  \tfrac{1}{r+1}\abc(T_{r+1}) &=& f(53,r+1) + 52 f(53,4) +156 f(4,2) + 156 f(2,1) \\
    &=& \sqrt{\tfrac{1}{53}} \sqrt{1+\tfrac{51}{r+1}} + 26\sqrt{\tfrac{55}{53}} + 156\sqrt{2}\\
    &=& \sqrt{\tfrac{1}{53}} \Bigl( 1 + \tfrac{51}{2(r+1)} + O(r^{-2}) \Bigr) + 26\sqrt{\tfrac{55}{53}} + 156\sqrt{2}.
\end{eqnarray*}
Since $r\le n/365$, we obtain
\begin{eqnarray*}
  \abc(T) &\le& \abc(T_{r+1}) \\
    &=&  \Bigl( \sqrt{\tfrac{1}{53}} + 26\sqrt{\tfrac{55}{53}} + 156\sqrt{2} \Bigr) r +
      \Bigl( \sqrt{\tfrac{1}{53}}\tfrac{51}{2} + \sqrt{\tfrac{1}{53}} + 26\sqrt{\tfrac{55}{53}} + 156\sqrt{2} + O(r^{-1}) \Bigr) \\
    &\le& c_0 n + 365 c_0 + \tfrac{51}{2} \sqrt{\tfrac{1}{53}} + O(n^{-1}).
\end{eqnarray*}

The lower bound follows from Theorem \ref{thm:large_structure} and using the same calculation as above (for $r$ instead $r+1$), and using the fact that $r\ge n/365 - 1$.
\end{proof}

\subsection*{Acknowledgement.}
The authors are grateful to an anonymous referee for carefully checking the results and for providing additional information and insight that simplified some of our proofs.


\begin{thebibliography}{99}

\bibitem{ourpaper}
M. B. Ahmadi, D. Dimitrov, I. Gutman, S. A. Hosseini,
Disproving a conjecture on trees with minimal atom-bond connectivity index,
MATCH Commun. Math. Comput. Chem. 72 (2014) 685--698.

\bibitem{ref25} M. B. Ahmadi, S. A. Hosseini, P. Salehi Nowbandegani,
On trees with minimal atom bond connectivity index,
MATCH Commun. Math. Comput. Chem. 69 (2013) 559--563.

\bibitem{ref26} M. B. Ahmadi, S. A. Hosseini, M. Zarrinderakht,
On large trees with minimal atom-bond connectivity index,
MATCH Commun. Math. Comput. Chem. 69 (2013) 565--569.

\bibitem{krag_tree}
S. A. Hosseini, M. B. Ahmadi, I. Gutman, 
Kragujevac trees with minimal atom-bond
connectivity index, 
MATCH Commun. Math. Comput. Chem. 71 (2014) 5–-20.

\bibitem{ref11} M. B. Ahmadi, M. Sadeghimehr,
Atom bond connectivity index of an infinite class $NS_1[n]$ of dendrimer nanostars,
Optoelectron. Adv. Mat. 4 (2010) 1040--1042.

\bibitem{comp1100}
Wenshui Lin, Jianfeng Chen, Zhixi Wu, Darko Dimitrov, Linshan Huang,
Computer search for large trees with minimal ABC index,
Applied Mathematics and Computation,
Volume 338 (2018)
Pages 221--230.

\bibitem{switch_trans}
Wenshui Lin and Tianyi Gao and Xin Lin,
On The Minimal ABC Index of Connected Graphs with Given Degree Sequence,
MATCH Commun. Math. Comput. Chem. 69 (2013) 571--578.


\bibitem{ref22} J. Chen, X. Guo,
Extreme atom-bond connectivity index of graphs,
MATCH Commun. Math. Comput. Chem. 65 (2011) 713--722.

\bibitem{ref12} J. Chen, J. Liu, Q. Li,
The atom-bond connectivity index of catacondensed polyomino graphs, Discrete Dyn. Nat. Soc. 2013 (2013) ID 598517.

\bibitem{ref10} Kinkar Ch. Das.
Atom-bond connectivity index of graphs.
Discrete Applied Mathematics 158(11) (2010) 1181--1188.

\bibitem{ref13} K. C. Das, N. Trinajsti\'{c},
Comparison between first geometric-arithmetic index and atom-bond connectivity index,
Chem. Phys. Lett. 497 (2010) 149--151.

\bibitem{ref21} K. C. Das, I. Gutman, B. Furtula,
On atom-bond connectivity index,
Chem. Phys. Lett. 511 (2011) 452--454.

\bibitem{Dim13} D. Dimitrov,
Efficient computation of trees with minimal atom-bond connectivity index,
Appl. Math. Comput. 224 (2013) 663--670.

\bibitem{ref18} D. Dimitrov,
On structural properties of trees with minimal atom-bond connectivity index,
Discrete Applied Mathematics 172 (2014) 28--44.

\bibitem{ref19} D. Dimitrov,
On structural properties of trees with minimal atom-bond connectivity index II: Bounds on B1- and B2-branches,
Discrete Applied Mathematics 204 (2016) 90--116.

\bibitem{ref20} D. Dimitrov, Z. Du, C.M. da Fonseca,
On structural properties of trees with minimal atom-bond connectivity index III,
Applied Mathematics and Computation 282, Issue C (2016) 276--290.

\bibitem{no_pendent3} D. Dimitrov,
On structural properties of trees with minimal atom-bond connectivity index IV: Solving a conjecture about the pendent paths of length three,
Applied Mathematics and Computation,
Volume 313 (2017)
418--430.

\bibitem{ref6} M. V. Diudea, I. Gutman, L. J\"antschi,
Molecular Topology, Nova, Huntington, 2002.

\bibitem{DuFonseca}
Z. Du, C. M. da Fonseca, 
On a family of trees with minimal atom-bond connectivity index,
Discrete Appl. Math. 202 (2016) 37--49.

\bibitem{ref2}
E. Estrada,
Characterization of 3D molecular structure,
Chem. Phys. Lett. 319 (2000) 713--718.

\bibitem{ref7} Ernesto Estrada.
Atom-bond connectivity and the energetic of branched alkanes.
Chemical Physics Letters, 463(4--6) (2008) 422--425.


\bibitem{ref23} B. Furtula, A. Graovac, D. Vuki\v{c}evi\'{c},
Atom-bond connectivity index of trees,
Discrete Appl. Math. 157 (2009) 2828--2835.

\bibitem{ref14} A. Graovac, M. Ghorbani,
A new version of the atom-bond connectivity index,
Acta Chim. Slov. 57 (2010) 609--612.

\bibitem{ref24} I. Gutman, B. Furtula,
Trees with smallest atom-bond connectivity index,
MATCH Commun. Math. Comput. Chem. 68 (2012) 131--136.

\bibitem{ref28}
I. Gutman, B. Furtula, M. B. Ahmadi, S. A. Hosseini, P. Salehi Nowbandegani, M. Zarrinderakht,
The ABC index conundrum, Filomat 27 (2013) 1075--1083.

\bibitem{ref29} I. Gutman, B. Furtula, M. Ivanovi\'c,
Notes on trees with minimal atom-bond connectivity index,
MATCH Commun. Math. Comput. Chem. 67 (2012) 467--482.

\bibitem{ref15}
I. Gutman, J. To\v{s}ovi\'{c}, S. Radenkovi\'{c}, S. Markovi\'{c},
On atom-bond connectivity index and its chemical applicability,
Indian J. Chem. 51A (2012) 690--694.

\bibitem{ref3} I. Gutman, B. Zhou,
Laplacian energy of a graph,
Linear Algebra Appl. 414 (2006) 29--37.

\bibitem{ref4} G. Indulal, I. Gutman, A. Vijaykumar,
On distance energy of graphs,
MATCH Commun. Math. Comput. Chem. 60 (2008) 461--472.

\bibitem{ref27} W. Lin, X. Lin, T. Gao, X. Wu,
Proving a conjecture of Gutman concerning trees with minimal ABC index, MATCH Commun. Math. Comput. Chem. 69 (2013) 549--557.

\bibitem{ref5} J. Liu, B. Liu,
A Laplacian energy like invariant of a graph,
MATCH Commun. Math. Comput. Chem. 59 (2008) 355--372.

\bibitem{ref16} X. Ke,
Atom-bond connectivity index of benzenoid systems and uoranthene congeners,
Polycycl. Aromat. Comp. 32 (2012) 27--35.

\bibitem{ref1}
R. Todeschini, V. Consonni, Molecular descriptors for chemoinformatics, Wiley-VCH, Weinheim, 2009.

\bibitem{ref9} Rundan Xing, Bo Zhou, and Zhibin Du.
Further results on atom-bond connectivity index of trees.
Discrete Applied Mathematics, 158(14)
(2010) 1536--1545.

\bibitem{ref17} J. Yang, F. Xia, H. Cheng,
The atom-bond connectivity index of benzenoid systems and phenylenes, Int. Math. Forum 6 (2011) 2001--2005.

\end{thebibliography}
\end{document}